\theoremstyle{plain}
\newtheorem{theorem}{Theorem}[section]
\newtheorem{corollary}[theorem]{Corollary}
\newtheorem{lemma}[theorem]{Lemma}
\newtheorem{definition}[theorem]{Definition}
\newtheorem{proposition}[theorem]{Proposition}
\numberwithin{equation}{section}
\theoremstyle{definition}
\newtheorem{remark}[theorem]{Remark}
\newcommand{\RNum}[1]{\uppercase\expandafter{\romannumeral #1\relax}}
\title[Spectral stability of shocks for the NSP system]{Spectral stability of shock profiles for the Navier--Stokes--Poisson system}  
\author{Wanyong Shim}
\address{(Wanyong Shim) Department of Mathematical Sciences, Korea Advanced Institute of Science and Technology, Daejeon, 34141, Korea}
\email{wyshim25@kaist.ac.kr}
\subjclass{35Q35; 35C07; 35B35; 35P15}
\thanks{\textbf{Acknowledgment.} This work was supported by Basic Science Research Program through the National Research Foundation of Korea(NRF) funded by the Ministry of Education(RS-2025-25429122).}
\begin{document}

\begin{abstract}
We investigate the spectral stability of small-amplitude shock profiles for the one-dimensional isothermal Navier--Stokes--Poisson system, which describes ion dynamics in a collision-dominated plasma. Specifically, we establish (i) bounds on the essential spectrum, (ii) bounds on the point spectrum, and (iii) simplicity of the zero eigenvalue for the linearized operator about the profile in $L^2$. The result in (i) shows that the zero eigenvalue arising from translation invariance is embedded in the essential spectrum. Consequently, the standard Evans function approach cannot be applied directly to prove (iii). To resolve this, we employ an Evans-function framework that extends into regions of the essential spectrum, thereby enabling us to compute the derivative of the Evans function at the origin. Our result establishes that this derivative admits a factorization into two factors: one associated with transversality of the connecting profile and the other with hyperbolic stability of the corresponding shock of the quasi-neutral Euler system. We further show that both factors are nonzero, which implies simplicity of the zero eigenvalue. \\

\noindent{\it Keywords}:
Navier--Stokes--Poisson system; Shock; Spectral stability; Evans function
\end{abstract}
\maketitle

\tableofcontents

\section{Introduction}
We consider the one-dimensional compressible Navier--Stokes--Poisson (NSP) system, which serves as a model for the dynamics of ions in an isothermal plasma in the collision-dominated regime \cite{GGKS}. In Lagrangian mass coordinates, the NSP system is written as
\begin{subequations} \label{NSP}
\begin{align}
& \label{NSP11} v_t - u_y = 0,\\
& \label{NSP22} u_t + \left( \frac{T}{v} \right)_y   =  \left( \frac{\nu u_y}{v} \right)_y - \frac{\phi_y}{v}, \\
& \label{NSP33} - \varepsilon^2 \bigg( \frac{\phi_y}{v} \bigg)_y = 1 - ve^{\phi}
\end{align}
\end{subequations}
for $t>0$ and $y \in \mathbb{R}$. Here $v=\tfrac{1}{n}$ is the specific volume for $n>0$, the density of ions, and $\phi$ is the electric potential. The constants $T>0$, $\nu>0$ and $\varepsilon>0$ represent the absolute temperature, viscosity coefficient and Debye length, respectively. In the Poisson equation \eqref{NSP33}, we have assumed that the electron density $n_e$ is determined by the Boltzmann relation, $n_e = e^\phi$, which is justified by the physical observation that electrons reach the equilibrium state much faster than ions for varying potential in a plasma \cite{Ch}.

The NSP system \eqref{NSP} admits a smooth traveling-wave solution, called a shock profile, connecting two distinct constant far-field states \cite{DLZ}. In this paper, we consider small-amplitude shock profiles of \eqref{NSP} and prove that the linearized operator about the profile has no spectrum in the closed right half-plane except at the origin. Moreover, the zero eigenvalue arising from translation invariance is simple. These two spectral stability conditions are recognized as necessary and sufficient conditions for linear and nonlinear orbital stability within the general stability theory for Lax-type viscous and relaxation shocks \cite{MZ1, MZ2, MZ3, MZ4, MZ5}.

As in the general case of viscous and relaxation shocks, the zero eigenvalue and the essential spectrum of the linearized operator are not separated, i.e., there is no spectral gap between them. This motivates us to employ the Evans-function framework of Zumbrun--Howard \cite{ZH} and Mascia--Zumbrun \cite{MZ1, MZ2}, based on the gap/conjugation lemma \cite{GZ, KS, MeZ}. This enables us to characterize the zero eigenvalue by analyzing an \emph{extended} Evans function at the origin. As a result, the first derivative of the Evans function at the origin is factored into a transversality coefficient and the Liu--Majda determinant for the corresponding quasi-neutral Euler shock; see Remark~\ref{Rem_E-cond}. This factorization is consistent with corresponding results for viscous conservation laws and relaxation systems \cite{Z1}. In the low-frequency regime, compared with the Navier--Stokes equations, the (nonlocal) electric force in the momentum equation \eqref{NSP22} introduces two additional fast modes in the associated eigenvalue problem. These modes consequently contribute to the transversality coefficient at the origin. We then show transversality via an analysis of the variational equation along the profile under the small-amplitude assumption. Further details of the overall argument are provided in Section~\ref{Outline}.

To put the present work in perspective, we briefly recall earlier stability results for one-dimensional shock profiles of the NSP system. Under a zero-mass type assumption on the initial perturbation, Duan, Liu, and Zhang \cite{DLZ} proved asymptotic stability using the classical energy method; see also \cite{LMY, Zh} for stability results within this zero-mass framework in different perturbation settings. The zero-mass assumption is closely tied to the use of anti-derivative variables and restricts the class of admissible perturbations, in particular excluding perturbations with nonzero total mass that lead to nontrivial modulation of the shock location.

More recently, Kang, Kwon, and the present author \cite{KKSh} applied a relative entropy method to establish asymptotic stability up to a time-dependent shift, without imposing the zero-mass restriction; see also \cite{Sh} for an extension to composite waves. In that approach, the shift is shown to be asymptotically negligible relative to the shock speed, thereby yielding asymptotic orbital stability. However, the estimates obtained in \cite{KKSh} do not suffice to show that the shift converges to a constant, so a limiting shock location is not identified.

The nonlinear stability results reviewed above are largely formulated at the level of energy estimates and therefore yield limited quantitative information on the long-time dynamics. In particular, they neither quantify the influence of the Poisson coupling nor provide sharp decay rates for the perturbations under consideration. The spectral stability conditions established in this paper complement these works by characterizing the spectral structure of the associated linearized operator, including the additional modes in the low-frequency regime induced by the nonlocal Poisson coupling. Moreover, these spectral conditions place the stability problem for NSP shocks within the scope of the pointwise semigroup theory of Zumbrun--Howard \cite{ZH} and Mascia--Zumbrun \cite{MZ1, MZ2}, in which such conditions yield linear and nonlinear orbital stability with sharp decay rates and identification of the limiting shock location. The corresponding analyses of linear and nonlinear stability for NSP shocks are left to future work.

\subsection{Shock profiles for the Navier--Stokes--Poisson system}
We introduce shock profiles for the NSP system \eqref{NSP} and their basic properties. A shock profile is a smooth traveling-wave solution of the form $(v,u,\phi)(t,y)= (\bar{v},\bar{u},\bar{\phi})(x)$, where $x=y-st$ with shock speed $s$, connecting two distinct constant far-field states $(v_-,u_-,\phi_-)$ and $(v_+,u_+,\phi_+)$. Substituting this ansatz into \eqref{NSP}, we obtain the governing ODEs for the shock profiles:
\begin{subequations} \label{waveeq}
\begin{align}
&\label{waveeq_1}- s \bar{v}_x - \bar{u}_x =0, \\
&\label{waveeq2}- s \bar{u}_x + \left(\frac{T}{\bar{v}} \right)_x = \left(\frac{\nu \bar{u}_x}{\bar{v}} \right)_x - \frac{\bar{\phi}_x}{\bar{v}}, \\
&\label{waveeq3}- \varepsilon^2 \left( \frac{\bar{\phi}_x}{\bar{v}}\right)_x = 1 - \bar{v} e^{\bar{\phi}},
\end{align}
\end{subequations}
with the far-field condition
\begin{equation} \label{ffcond}
\lim_{x \rightarrow \pm \infty} (\bar{v},\bar{u},\bar{\phi})(x) = (v_\pm,u_\pm,\phi_\pm), \quad \phi_\pm=-\ln v_\pm,
\end{equation}
where the quasi-neutral relation $\phi_\pm=-\ln v_\pm$ follows by taking the formal limit $x \to \pm \infty$ in \eqref{waveeq3}. The shock speed $s$ is determined by the Rankine--Hugoniot condition:
\begin{equation} \label{RH}
\begin{split}
& -s (v_+-v_-)-(u_+-u_-)=0, \\
& -s (u_+-u_-)+(T+1)\left(\frac{1}{v_+} - \frac{1}{v_-} \right) = 0,
\end{split}
\end{equation}
which yields
\begin{equation} \label{RH1}
s = s_\pm = \pm \sqrt{\frac{T+1}{v_+v_-}}.
\end{equation}
The second relation in \eqref{RH} is obtained by rewriting \eqref{waveeq2} in divergence form and integrating the resulting equation; see \eqref{divfor2}. In this paper, without loss of generality, we restrict ourselves to the \emph{2-shock profile} with $s=s_+$, which satisfies the Lax entropy condition
\begin{equation} \label{Lax}
v_+ > v_-.
\end{equation}
The existence and uniqueness of the small-amplitude 2-shock profile have been treated in \cite{DLZ}:

\begin{lemma}[\cite{DLZ}, Proposition~1.9] \label{SP}
For given $(v_-,u_-)$ with $v_->0$, there exists a positive constant $\delta_0>0$ such that for any $(v_+,u_+,s)$ satisfying \eqref{RH}, \eqref{Lax}, and
\begin{equation*}
|v_+-v_-| =: \delta_S < \delta_0,
\end{equation*}
the ODE system \eqref{waveeq} admits a unique (up to a shift) shock profile $(\bar{v},\bar{u},\bar{\phi})(x)$ satisfying \eqref{ffcond} and
\begin{equation} \label{sprel}
s\bar{v}_x = -\bar{u}_x >0, \quad \underline{C} \bar{u}_x \leq \bar{\phi}_x \leq \overline{C} \bar{u}_x
\end{equation}
for some positive constants $\underline{C}$, $\overline{C}$. Moreover, the unique solution satisfying $\textstyle \bar{v}(0)=\frac{v_-+v_+}{2}$ verifies the derivative bounds
\begin{equation} \label{spbound}
\begin{cases}
\left| \frac{d^k}{dx^k} (\bar{v}-v_+, \bar{u}-u_+,\bar{\phi}-\phi_+) \right| \leq C_k \delta_S^{k+1} e^{-\theta \delta_S |x|}, & x>0 \\
\left| \frac{d^k}{dx^k} (\bar{v}-v_-, \bar{u}-u_-,\bar{\phi}-\phi_-) \right| \leq C_k \delta_S^{k+1} e^{-\theta \delta_S |x|}, & x<0
\end{cases}
\end{equation}
for $k \in \mathbb{N} \cup \{0 \}$, where $C_k>0$ and $\theta>0$ are generic constants.
\end{lemma}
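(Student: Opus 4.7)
The plan is to recast the ODE system \eqref{waveeq} as a $4$-dimensional autonomous first-order system in the phase variables $(\bar v,\bar v_x,\bar\phi,\bar\phi_x)$, with equilibria $P_\pm=(v_\pm,\,0,\,-\ln v_\pm,\,0)$ corresponding to the far-field states, and to realize the shock profile as a heteroclinic orbit from $P_-$ to $P_+$. First I would integrate \eqref{waveeq_1} to obtain $\bar u = -s(\bar v-v_\pm)+u_\pm$, which is consistent across the two ends precisely by the first Rankine--Hugoniot relation in \eqref{RH}. Substituting $\bar u_x=-s\bar v_x$ into \eqref{waveeq2} turns it into a second-order equation for $\bar v$ forced by $\bar\phi_x$, while \eqref{waveeq3} is already a second-order equation for $\bar\phi$ coupled to $\bar v$; together they yield the desired $4$D system.

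Next I would analyze the linearization at $P_\pm$. In the small-amplitude regime the spectrum splits into two slow hydrodynamic modes of size $O(\delta_S)$ and two fast Debye modes of $O(1)$. The fast modes are hyperbolic with one stable and one unstable direction at each equilibrium because the second-order Poisson operator contributes eigenvalues of opposite sign, while the slow modes recover the Lax geometry of the quasi-neutral Euler $2$-shock: under \eqref{Lax} and the choice $s=s_+$, a direct dimension count gives $\dim W^u(P_-)=\dim W^s(P_+)=2$ in the $4$D phase space, so modulo the translation invariance the two manifolds are expected to intersect transversally along a one-parameter family, producing a unique heteroclinic.

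To realize this expectation rigorously for small $\delta_S$, I would carry out a slow-manifold/center-manifold reduction. After rescaling $x=X/\delta_S$ and expanding in $\delta_S$, the fast Debye modes slave algebraically to the slow variables and the dynamics on the resulting two-dimensional slow manifold reduce at leading order to a scalar Burgers-type equation
\begin{equation*}
\bar v_X = a(\bar v-v_-)(\bar v-v_+) + O(\delta_S),
\end{equation*}
with $a\neq 0$ determined by the Hessian of the quasi-neutral Euler flux. For $v_+>v_-$ this scalar equation possesses an explicit monotone heteroclinic decaying exponentially at rate $O(\theta\delta_S)$, and the implicit function theorem in an appropriate weighted space near the degenerate point $v_+=v_-$ lifts it to a heteroclinic of the full $4$D system, unique up to translation. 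The monotonicity $\bar v_x>0$ and the two-sided bounds $\underline C\bar u_x\le\bar\phi_x\le\overline C\bar u_x$ in \eqref{sprel} are then obtained from the leading-order slow-manifold relation, which expresses $\bar\phi_x$ as a smooth nonvanishing multiple of $\bar v_x$, and the derivative bounds \eqref{spbound} follow by differentiating the ODE system $k$ times and invoking the exponential decay in the slow scale $X=\delta_S x$.

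The main technical obstacle I expect is the nonlocal character introduced by the Poisson coupling: unlike the Navier--Stokes case, the electric potential cannot be eliminated algebraically, so the slow-fast splitting must be carried out as a genuine invariant-manifold reduction rather than a simple substitution. Tracking the $\delta_S$-dependence of the fast-mode contributions and verifying that the nonzero Debye modes neither destroy monotonicity nor spoil the leading-order scalar equation is the delicate step; once this reduction is controlled, the remaining conclusions of the lemma follow by standard dynamical-systems arguments.
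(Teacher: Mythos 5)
Your overall strategy---recast \eqref{waveeq} as a first-order autonomous system, identify slow and fast spectral blocks at the coalescing equilibria $P_\pm$, reduce to a scalar Burgers-type equation on the slow (center) manifold, and lift the explicit monotone heteroclinic back to the full system---is indeed essentially the route taken in \cite[Section~2]{DLZ}, which the paper cites rather than reproves. The leading-order scalar equation you write down is correct up to normalization; after integrating the divergence-form momentum balance \eqref{divfor2} once and using $s^2 = (T+1)/(v_+v_-)$, one finds precisely the quadratic $(\bar v - v_+)(\bar v - v_-)$ with a nonvanishing coefficient, so the monotonicity of $\bar v_x$ and the two-sided comparison $\underline C\,\bar u_x\le\bar\phi_x\le\overline C\,\bar u_x$ do fall out of the slow-manifold relation as you claim.

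There is, however, a genuine gap in your phase-space setup and the resulting dimension count. You work in the $4$-dimensional phase space $(\bar v,\bar v_x,\bar\phi,\bar\phi_x)$ and assert that the linearization at $P_\pm$ has ``two slow hydrodynamic modes of size $O(\delta_S)$ and two fast Debye modes,'' then conclude $\dim W^u(P_-)=\dim W^s(P_+)=2$ and a transversal one-parameter intersection. This does not work as stated: in $\mathbb{R}^4$, two $2$-dimensional manifolds generically intersect in a $0$-dimensional set, not a curve, so the dimension count contradicts the existence of a $1$-parameter translate family of heteroclinics. The missing observation is that \eqref{waveeq2}, once rewritten in the divergence form \eqref{divfor2}, is an exact $x$-derivative and hence integrates once, so $\bar v_x$ is \emph{not} an independent phase variable. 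The correct reduced phase space is $3$-dimensional, $(\bar v,\bar\phi,\bar\phi_x)$ (or, equivalently, $(\bar u,\bar\phi,\bar\phi_x)$, as is implicit in the variational system \eqref{v_eq} in Appendix~\ref{App_trans}), whose linearization at the reference equilibrium has one slow mode and two fast Debye modes of opposite sign. There the count $2+2>3$ yields the expected $1$-dimensional transversal heteroclinic. Equivalently, if you insist on the $4$D formulation, you must observe that the once-integrated momentum balance furnishes a first integral, so that the linearization acquires a neutral (zero) eigenvalue and both $W^u(P_-)$ and $W^s(P_+)$ live in the same $3$-dimensional level set, within which the count again gives dimension one. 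Related to this, your ``two slow modes'' is off: there is one slow mode plus (in the $4$D picture) one trivial mode from the conserved quantity, not two $O(\delta_S)$ modes. Finally, DLZ's ``augmented'' center-manifold reduction amounts to appending the amplitude parameter $\delta_S$ (or equivalently $s$) as a dummy state variable so that the reduction is uniform near the degenerate reference state $v_+=v_-$; you gesture at this with the implicit function theorem near $v_+=v_-$ but the augmentation needs to be made explicit for the reduction theorem to apply uniformly in $\delta_S$, which is precisely what is needed to extract the scaling $\delta_S^{k+1}e^{-\theta\delta_S|x|}$ in \eqref{spbound}.
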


\subsection{Main result}
To state our result, we define the linearized operator $L$ about the profile $(\bar{v},\bar{u},\bar{\phi})$ on $L^2(\mathbb{R})\times L^2(\mathbb{R})$:
\begin{equation} \label{Ldef}
LU := \left(A U \right)_x + \left( B U_x \right)_x + \left(D ( \mathcal{A}^{-1}\mathcal{B} U )_x \right)_x + \left(E ( \mathcal{A}^{-1}\mathcal{B} U )_{xx} \right)_x,
\end{equation}
where
\begin{equation} \label{ABDE}
\begin{split}
A(x) &= \begin{pmatrix}
s & 1 \\
\frac{T+1}{\bar{v}^2} - \frac{\nu\bar{u}_x}{\bar{v}^2} - \frac{\varepsilon^2 \bar{\phi}_x^2}{\bar{v}^3} + \frac{\varepsilon^2}{\bar{v}} \left( \frac{\bar{\phi}_x}{\bar{v}^2} \right)_x + \frac{\varepsilon^2}{\bar{v}^2} \left( \frac{\bar{\phi}_x}{\bar{v}} \right)_x & s
\end{pmatrix} \\
B(x) &= \begin{pmatrix}
0 & 0 \\
\frac{\varepsilon^2 \bar{\phi}_x}{\bar{v}^3} &  \frac{\nu}{\bar{v}}
\end{pmatrix}, \quad D(x) = \begin{pmatrix}
0 & 0 \\
\frac{\varepsilon^2 \bar{\phi}_x}{\bar{v}^2} + \frac{\varepsilon^2\bar{v}_x}{\bar{v}^3} & 0
\end{pmatrix}, \quad E(x) = \begin{pmatrix}
0 & 0 \\
-\frac{\varepsilon^2}{\bar{v}^2} & 0
\end{pmatrix}.
\end{split}
\end{equation}
The nonlocal operator $\mathcal{A}^{-1}\mathcal{B}$ denotes the solution operator of the linearized Poisson equation (cf. Appendix~\ref{App_B}), where
\begin{equation} \label{calAB}
\begin{split}
\mathcal{A} & =\bar{v} e^{\bar{\phi}}+ \frac{\varepsilon^2 \bar{v}_x}{\bar{v}^2} \partial_x-\frac{\varepsilon^2}{\bar{v}} \partial_{xx}, \quad \mathcal{B} = -e^{\bar{\phi}} -\frac{\varepsilon^2 \bar{\phi}_{xx}}{\bar{v}^2} + \frac{2\varepsilon^2 \bar{v}_x \bar{\phi}_x}{\bar{v}^3}-\frac{\varepsilon^2 \bar{\phi}_x}{\bar{v}^2} \partial_x.
\end{split}
\end{equation}
For $U=(v,u)^{\mathrm{tr}}$ we use the notational extension $\mathcal{A}^{-1}\mathcal{B}U := (\mathcal{A}^{-1}\mathcal{B}v,0)^{\mathrm{tr}}$, since $D$ and $E$ in \eqref{ABDE} have vanishing second column, and hence only the first component is used in \eqref{Ldef}. We note that, as shown in Appendix~\ref{App_closed}, the operator $L$ is closed and densely defined on $L^2(\mathbb{R})\times L^2(\mathbb{R})$.

The main result of this paper is as follows.

\begin{theorem} \label{Main}
Let $(\bar{v},\bar{u},\bar{\phi})$ be the shock profile described in Lemma~\ref{SP}, and let $L$ denote the linear operator defined in \eqref{Ldef}--\eqref{calAB}. There exists a constant $\delta_1>0$ such that if the shock strength $\delta_S$ is less than $\delta_1$, then the shock profile is strongly spectrally stable in $L^2$. That is,
\begin{equation} \label{sstability}
\sigma(L) \cap \{ \lambda \in \mathbb{C}: \operatorname{Re}{\lambda} \geq 0 \} = \{0\},
\end{equation}
where $\sigma(L)$ denotes the spectrum of $L$ on $L^2(\mathbb{R}) \times L^2(\mathbb{R})$. Moreover, $\lambda =0$ is an eigenvalue of $L$ with algebraic multiplicity one.
\end{theorem}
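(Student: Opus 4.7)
The plan is to split the spectrum of $L$ into its essential and point parts, bound each separately in the closed right half-plane, and then extract the algebraic multiplicity at $\lambda=0$ from a derivative computation of a suitably extended Evans function.

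First, I would recast the eigenvalue equation $LU=\lambda U$, after substituting the solution formula for $\mathcal{A}^{-1}\mathcal{B}$ from the linearized Poisson equation, as a first-order system $W_x=\mathbb{A}(x,\lambda)W$ whose coefficient matrix decays exponentially to constant-coefficient limits $\mathbb{A}_\pm(\lambda)$ by virtue of \eqref{spbound}. Henry's theorem then locates the essential spectrum of $L$ inside the set where either $\mathbb{A}_+(\lambda)$ or $\mathbb{A}_-(\lambda)$ has a purely imaginary eigenvalue. Plugging the Fourier ansatz $e^{i\xi x}$ into the constant-coefficient linearization at $(v_\pm,u_\pm,\phi_\pm)$ yields explicit dispersion relations whose real parts, under the Lax condition $v_+>v_-$ and small amplitude, are non-positive, with equality only at $\lambda=0$ corresponding to translation invariance; this gives claim (i).

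Second, for the point spectrum in $\{\operatorname{Re}\lambda>0\}$ I would construct the Evans function $D(\lambda)$ as a Wronskian of bases of solutions decaying at $+\infty$ and $-\infty$, analytic on this half-plane and whose zeros coincide with the eigenvalues of $L$ there. Standard high-frequency energy estimates, performed on the linearized momentum equation after controlling the nonlocal term $\mathcal{A}^{-1}\mathcal{B}$ via the elliptic estimate for $\mathcal{A}$ discussed in Appendix~\ref{App_B}, rule out eigenvalues of large modulus. On the remaining compact subregion of $\{\operatorname{Re}\lambda\ge 0\}\setminus\{0\}$, I would invoke the small-amplitude assumption together with a homotopy from the profile to its zero-amplitude limit, where $D$ can be computed explicitly and shown to be zero-free away from the origin; continuity in $\delta_S$ and a Rouché-type argument then preserve this property for $\delta_S<\delta_1$, yielding claim (ii).

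Third, and most delicate, is the multiplicity of the zero eigenvalue, which is embedded in the essential spectrum. I would apply the gap/conjugation lemma \cite{GZ, KS, MeZ} to analytically extend $D(\lambda)$ into a disc about $\lambda=0$. One has $D(0)=0$ because $(\bar v_x,\bar u_x)^{\mathrm{tr}}$ solves $LU=0$ and, by \eqref{spbound}, decays exponentially at both $\pm\infty$, so it lies in both subspaces whose Wronskian defines $D$. The derivative $D'(0)$ is then computed by differentiating the defining Wronskian and identified as a product $\gamma\cdot\Delta$, where $\gamma$ is a transversality coefficient measuring how the connecting orbit of \eqref{waveeq} spreads inside the appropriate center-stable/center-unstable manifolds at the endpoints, and $\Delta$ is the Liu--Majda determinant of the corresponding quasi-neutral Euler shock, which is nonzero by the Lax condition \eqref{Lax} together with \eqref{RH1}. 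The main obstacle is proving $\gamma\neq 0$: the two additional fast modes introduced by the nonlocal Poisson coupling must be carefully tracked along the variational equation of \eqref{waveeq}, and their interaction with the slow hyperbolic modes quantified under the small-amplitude regime. Once $\gamma\neq 0$ is established, $D'(0)\neq 0$ yields simplicity of the zero eigenvalue and, combined with (i) and (ii), completes the proof of Theorem~\ref{Main}.
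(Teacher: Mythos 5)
Your overall strategy — essential-spectrum bounds, point-spectrum bounds, and simplicity at $\lambda=0$ via an extended Evans function — matches the paper's, and your outlines for claims (i) and (iii) (Fourier symbols of $L_\pm$, conjugation lemma, the factorization $D'(0)=\gamma\Delta$ with $\Delta\neq0$ from \eqref{RH} and $\gamma\neq 0$ from transversality of the connecting orbit) agree in substance with Sections~\ref{sec:Ess} and~\ref{sec:Simplicity} and Appendix~\ref{App_trans}. For claim (ii), however, you take a genuinely different route. The paper works entirely by energy estimates in integrated (anti-derivative) variables $(\Phi,\Psi)=(\int_{-\infty}^x v,\int_{-\infty}^x u)$: Lemma~\ref{Lemma:3.2} shows that any eigenfunction with $\lambda\in\Omega$ satisfies $\operatorname{Re}\lambda\,\lVert(\Phi,\Psi)\rVert_{H^2}^2 + c(\lVert\Phi_x\rVert_{H^1}^2+\lVert\Psi_x\rVert_{H^2}^2+\cdots)\le 0$, forcing $(\Phi,\Psi)\equiv 0$. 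You instead propose high-frequency energy estimates to bound $|\lambda|$ followed by a homotopy/Rouch\'e argument in $\delta_S$ to the zero-amplitude Evans function. The paper's method is more self-contained (it keeps the Evans machinery confined to Section~\ref{sec:Simplicity}) and gives a direct quantitative bound; your route has precedent (it is close in spirit to Humpherys--Zumbrun \cite{HZ} for small-amplitude viscous shocks), but the sketch glosses over two nontrivial points. First, at $\delta_S=0$ the profile degenerates to a constant state and $W_0=\bar U'\equiv 0$, so the naive normalized Evans function is identically zero; one needs the rescaled Evans function of \cite{HZ} and comparison with a limiting Burgers-type problem, which is technically demanding. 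Second, ``the remaining compact subregion of $\{\operatorname{Re}\lambda\ge 0\}\setminus\{0\}$'' is not compact near $\lambda=0$, so the Rouch\'e step must be confined to an annulus $\{\epsilon\le|\lambda|\le R\}$ and the punctured disc near $0$ handed off to your part (iii) analysis; this works but should be stated. With those two points addressed, your route would give an alternative proof of Proposition~\ref{Prop:3.1}, but as written it has gaps that the paper's integrated energy estimate sidesteps.
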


\begin{remark} \label{Rem_E-cond}
The statement in Theorem~\ref{Main} that $\lambda=0$ has algebraic multiplicity one is understood in the sense that the (extended) Evans function associated with the eigenvalue problem for $L$ has a simple zero at $\lambda=0$; see Remark~\ref{agree}. With this interpretation, the result in Theorem~\ref{Main} may be rewritten as the following conditions for the Evans function $\mathcal D(\lambda)$:
\smallskip
\begin{enumerate}
\item[$(\textbf{D1})$] $\mathcal{D}(\cdot)$ has no zeros in $\{ \operatorname{Re} \lambda \geq 0 \} \setminus \{ 0 \}$.
\item[$(\textbf{D2})$] $(d/d\lambda) \mathcal{D}(0) \neq 0$.
\end{enumerate}
\smallskip
Here $(\textbf{D1})$ is implied by \eqref{sstability}, and $(\textbf{D2})$ encodes the above simplicity statement at $\lambda=0$. We note that the Evans-function conditions $(\textbf{D1})$--$(\textbf{D2})$ are necessary and sufficient conditions for linearized and nonlinear stability of shock profiles for a general system of viscous conservation laws in one space dimension \cite[Section~1.4.2]{ZJL}; see also \cite{MZ3, MZ4, MZ5}. As shown in Proposition~\ref{D'rel}, the condition $(\textbf{D2})$ is equivalent to
\begin{equation}
\Gamma \Delta \neq 0,
\end{equation}
where $\Gamma$ measures transversality of the connection between $(v_-,u_-,\phi_-)$ and $(v_+,u_+,\phi_+)$ given by the profile $(\bar{v},\bar{u},\bar{\phi})$, and $\Delta$ is the Liu--Majda determinant for the corresponding shock of the associated hyperbolic system, namely the quasi-neutral Euler system:
\begin{equation*}
\begin{split}
& v_t - u_y =0, \\
& u_t + (T+1) \left( \frac{1}{v} \right)_y =0,\\
& \phi = - \ln v.
\end{split}
\end{equation*}
This system can be obtained by taking the formal limits $\varepsilon \to 0$ and $\nu \to 0$ in \eqref{NSP}. For further discussion of the relation \eqref{gDel} and the associated transversality coefficient in the context of viscous shock profiles, we refer the reader to \cite[Section~1.4]{MZ2}.
\end{remark}

\subsection{Main ideas of the proof} \label{Outline}
We adopt the definitions of the essential spectrum $\sigma_{\mathrm{ess}}(L)$ and the point spectrum $\sigma_{\mathrm{pt}}(L)$ from \cite[Chapter~2]{KP} for the closed, densely defined operator $L$. Then the spectrum of the linearized operator consists of the essential and point spectra, i.e., $\sigma(L) = \sigma_{\mathrm{ess}}(L) \cup \sigma_{\mathrm{pt}}(L)$, where $\sigma_{\mathrm{pt}}(L)$ coincides with the set of discrete isolated eigenvalues with finite multiplicity. Theorem~\ref{Main} is proved by combining bounds on $\sigma_{\mathrm{ess}}(L)$ and $\sigma_{\mathrm{pt}}(L)$ with a verification that the eigenvalue at the origin is simple.

We first prove the stability condition \eqref{sstability} for the essential spectrum. By standard arguments of \cite{Henry, KP}, the bounds on $\sigma_{\mathrm{ess}}(L)$ are determined by the essential spectra of the limiting constant-coefficient operators $L_\pm$, characterized by the associated dispersion relations.

Building on the above result, we next show $\sigma_{\mathrm{pt}}(L) \subset \mathbb{C} \setminus \Omega$, where $\Omega:=\{\lambda\in\mathbb{C}: \operatorname{Re}\lambda\ge 0\}\setminus\{0\}$. We consider an integrated linear operator $\mathcal{L}$, obtained by introducing anti-derivative variables, whose point spectrum coincides with that of $L$ on $\Omega$. We then establish the desired bounds on the point spectrum via spectral energy estimates for the eigenvalue equation associated with $\mathcal{L}$, combined with a contradiction argument.

Finally, we prove that the eigenvalue $\lambda=0$ is simple. Note that the essential spectrum bounds obtained above guarantee that a neighborhood $B_+(0,r)=B(0,r)\cap\{\operatorname{Re}\lambda>0\}$ lies in the domain of consistent splitting for the associated first-order system \eqref{5}. Thus we may define the Evans function on $B_+(0,r)$. For this, we apply the conjugation lemma to construct analytic bases of solutions decaying as $x\to+\infty$ and as $x\to-\infty$ on $B(0,r)$. We first define the Evans function $\mathcal{D}(\lambda)$ on $B_+(0,r)$ using these bases, and then extend it analytically to $B(0,r)$. A direct computation yields
\begin{equation*}
\mathcal{D}'(0)=\Gamma\Delta,
\end{equation*}
where $\Gamma$ and $\Delta$ are as described in Remark~\ref{Rem_E-cond}. The Rankine--Hugoniot relation \eqref{RH} implies $\Delta\neq0$, and thus it remains to prove $\Gamma\neq0$. Here, we note that the transversality of the small-amplitude shock profiles in Lemma~\ref{SP} follows essentially from their construction in \cite[Section~2]{DLZ}. There the profiles are constructed via a center manifold reduction. In particular, for an augmented profile ODE system, the dynamics near a reference equilibrium reduces to a one-dimensional equation on the center manifold, with a heteroclinic orbit that is unique up to translation. Moreover, the linearized flow in the complementary subspace admits an exponential dichotomy. These two observations imply the transverse connection between corresponding stable and unstable manifolds at infinity, and hence $\Gamma \neq 0$ for sufficiently small amplitudes. The detailed proof is given in Appendix~\ref{App_trans}. Since $\Delta \neq 0$ and $\Gamma \neq 0$, we conclude $\mathcal{D}'(0)\neq0$, which implies that $\lambda=0$ has algebraic multiplicity one.

\medskip
\emph{Plan of the paper.} In Section~\ref{sec:lin}, we derive the linearized eigenvalue problem about the shock profiles. In Sections~\ref{sec:Ess} and~\ref{sec:Point}, we obtain the bounds on the essential spectrum and point spectrum, respectively. These bounds establish the strong spectral stability of the shock profiles stated in Theorem~\ref{Main}. Section~\ref{sec:Simplicity} proves the simplicity of the zero eigenvalue by showing that the Evans function has a simple zero at $\lambda=0$. The appendices collect background material and proofs: we recall the conjugation lemma of \cite{MeZ}, and we prove solvability of the linearized Poisson equation, the domain and closedness of the linearized operator, higher-order energy estimates deferred from Section~\ref{sec:Point}, and transversality of the shock profiles.

\section{Linearization about the profile} \label{sec:lin}
We rewrite the NSP system \eqref{NSP} in the moving frame $(t,x)$, where $x=y-st$, so that the shock profile defined in Lemma~\ref{SP} becomes a standing wave within the resulting system. The NSP system in the coordinates $(t,x)$ reads
\begin{subequations} \label{movingNSP}
\begin{align}
& \label{movingNSP1} v_t - sv_x - u_x =0, \\
& \label{movingNSP2} u_t - su_x + T \left( \frac{1}{v} \right)_x = \nu \left( \frac{u_x}{v} \right)_x - \frac{\phi_x}{v}, \\
& \label{movingNSP3} -\varepsilon^2 \left( \frac{\phi_x}{v} \right)_x = 1- ve^{\phi}.
\end{align}
\end{subequations}
From the Poisson equation \eqref{movingNSP3}, we derive the following identity:
\begin{equation*}
\frac{\phi_x}{v} = \left[ \frac{1}{v} + \frac{\varepsilon^2}{v} \left( \frac{\phi_x}{v} \right)_x - \frac{\varepsilon^2}{2} \left( \frac{\phi_x}{v} \right)^2 \right]_x.
\end{equation*}
Substituting this into \eqref{movingNSP2}, the system \eqref{movingNSP} is rewritten in divergence form as
\begin{subequations} \label{divfor}
\begin{align}
&\label{divfor1} v_t - sv_x - u_x =0, \\
&\label{divfor2} u_t - su_x + (T+1) \bigg( \frac{1}{v} \bigg)_x = \nu \left( \frac{u_x}{v} \right)_x - \varepsilon^2 \bigg[ \frac{1}{v} \bigg( \frac{\phi_x}{v} \bigg)_x -\frac{1}{2} \bigg( \frac{\phi_x}{v} \bigg)^2 \bigg]_x, \\
&\label{divfor3} -\varepsilon^2 \left( \frac{\phi_x}{v} \right)_x = 1- ve^{\phi}.
\end{align}
\end{subequations}
We note that this reformulation highlights the structure of the NSP system. In the momentum equation \eqref{divfor2}, we find that $\textstyle(\frac{1}{v})_x$ on the left-hand side contributes to the hyperbolic part of the system, coming from the electric force in \eqref{movingNSP2}, while the forcing term due to the non-neutral plasma density, depending on $\varepsilon$, remains separated on the right-hand side.

Define perturbations around the shock profile $(\bar{v},\bar{u},\bar{\phi})(x)$ as
\begin{equation} \label{defpert}
(\tilde{v},\tilde{u},\tilde{\phi})(t,x) := (v,u,\phi)(t,x) - (\bar{v},\bar{u},\bar{\phi})(x).
\end{equation}
Then, the perturbation equations are given by
\begin{equation*}
\begin{split}
& \tilde{v}_t - s\tilde{v}_x - \tilde{u}_x = 0, \\
& \tilde{u}_t - s\tilde{u}_x + (T+1) \left( \frac{1}{v} - \frac{1}{\bar{v}} \right)_x - \nu \left( \frac{u_x}{v} - \frac{\bar{u}_x}{\bar{v}} \right)_x \\
& \qquad \qquad \qquad = - \varepsilon^2 \bigg[ \frac{1}{v} \left( \frac{\phi_x}{v} \right)_x - \frac{1}{\bar{v}} \left( \frac{\bar{\phi}_x}{\bar{v}} \right)_x \bigg]_x + \frac{\varepsilon^2}{2} \bigg[ \left( \frac{\phi_x}{v} \right)^2 - \left(\frac{\bar{\phi}_x}{\bar{v}}\right)^2 \bigg]_x, \\
& -\varepsilon^2 \left( \frac{\phi_x}{v}- \frac{\bar{\phi}_x}{\bar{v}} \right)_x = \bar{v}e^{\bar{\phi}} - v e^\phi,
\end{split}
\end{equation*}
We linearize this system about the profile $(\bar{v},\bar{u},\bar{\phi})$ to obtain
\begin{subequations} \label{LNSP}
\begin{align}
&\label{LNSP1} v_t - sv_x - u_x = 0, \\
\label{LNSP2} \begin{split} & u_t - su_x - (T+1) \left( \frac{v}{\bar{v}^2} \right)_x - \nu \left( \frac{u_x}{\bar{v}} - \frac{\bar{u}_x}{\bar{v}^2}v \right)_x \\
&  \qquad \qquad \qquad  =  \varepsilon^2 \left( \frac{\bar{\phi}_x}{\bar{v}^2}\phi_x - \frac{\bar{\phi}_x^2}{\bar{v}^3} v \right)_x  - \varepsilon^2 \left[ \frac{1}{\bar{v}}\left( \frac{\phi_x}{\bar{v}} - \frac{\bar{\phi}_x}{\bar{v}^2}v  \right)_x - \left( \frac{\bar{\phi}_x}{\bar{v}} \right)_x \frac{v}{\bar{v}^2} \right]_x, 
\end{split} \\
& \label{LNSP3} -\varepsilon^2 \left( \frac{\phi_x}{\bar{v}} - \frac{\bar{\phi}_x}{\bar{v}^2}v \right)_x = - e^{\bar{\phi}}v - \bar{v}e^{\bar{\phi}}\phi.
\end{align}
\end{subequations}
In \eqref{LNSP}, we have suppressed tildes for notational convenience. By solvability of the linearized Poisson equation \eqref{LNSP3} for $\phi$ (see Lemma~\ref{theorem:6.1.1}), we may write 
\begin{equation} \label{phisol}
\phi = \mathcal{A}^{-1} \mathcal{B} v,
\end{equation}
with the differential operators $\mathcal{A}$ and $\mathcal{B}$ defined in \eqref{calAB}. Using \eqref{phisol}, we reduce the linearized NSP system \eqref{LNSP} to
\begin{equation} \label{2}
U_t = LU,
\end{equation}
where $U=(v,u)^{\mathrm{tr}}$ and the nonlocal linear operator $L$ is defined by \eqref{Ldef}--\eqref{ABDE}.

We further define the limiting operators of $L$ as $x\to \pm\infty$:
\begin{equation} \label{Lpm}
L_\pm U :=  A_\pm U_x + B_\pm U_{xx} + E_\pm \left( \mathcal{A}_\pm^{-1} \mathcal{B}_\pm U \right)_{xxx}
\end{equation}
with
\begin{equation} \label{ABpm}
\mathcal{A}_\pm = 1 - \frac{\varepsilon^2}{v_\pm} \partial_{xx}, \quad \mathcal{B}_\pm =  - \frac{1}{v_\pm},
\end{equation}
where $F_\pm$ denotes its limit $\lim_{x \rightarrow \pm \infty} F(x)$ for a matrix-valued function $F(x)$. For later use, we also write the linearized momentum equation in the (original) non-divergence form
\begin{equation} \label{LNSP2'}
u_t - su_x - T \left( \frac{v}{\bar{v}^2} \right)_x - \nu \left( \frac{u_x}{\bar{v}} - \frac{\bar{u}_x}{\bar{v}^2} v \right)_x = - \left( \frac{\phi_x}{\bar{v}} - \frac{ \bar{\phi}_x}{\bar{v}^2}v \right).
\end{equation}
Since \eqref{LNSP2} and \eqref{LNSP2'} are related by substituting the linearized Poisson equation \eqref{LNSP3}, both formulations lead to the same spectral problem.

\section{Essential spectrum bounds} \label{sec:Ess}

In this section, we establish bounds for the essential spectrum of $L$, defined by \eqref{Ldef}--\eqref{calAB}. These bounds provide a partial proof of the spectral stability condition \eqref{sstability} in Theorem~\ref{Main}. Moreover, our analysis shows that there is no spectral gap between $\lambda=0$ and the essential spectrum. This motivates our use, in Section~\ref{sec:Simplicity}, of an extended Evans-function framework to establish simplicity of the zero eigenvalue. The main result of this section is the following proposition.

\begin{proposition} \label{prop:ess}
Under the assumptions in Theorem~\ref{Main}, we have
\begin{equation*}
\sigma_{\mathrm{ess}}(L) \cap \{ \lambda \in \mathbb{C} : \operatorname{Re} \lambda \geq 0 \} = \{ 0 \},
\end{equation*}
where $\sigma_{\mathrm{ess}}(L)$ is the essential spectrum of the linearized operator $L$ on $L^2$.
\end{proposition}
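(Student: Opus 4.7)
The plan is to reduce the essential spectrum of $L$ to that of the limiting constant-coefficient operators $L_\pm$ defined in \eqref{Lpm}, and then compute the latter by Fourier analysis. By Lemma~\ref{SP}, the coefficient matrices $A(x),B(x),D(x),E(x)$ in \eqref{ABDE} converge to $A_\pm,B_\pm,D_\pm=0,E_\pm$ at an exponential rate $e^{-\theta\delta_S|x|}$ as $x\to\pm\infty$, and the same decay holds for the coefficients of $\mathcal{A}$ and $\mathcal{B}$ in \eqref{calAB}. Following the standard framework of \cite{Henry,KP}, one writes $L=L_+\chi_++L_-\chi_-+K$, where $\chi_\pm$ are smooth cutoffs associated with $\pm x\gg 1$, and $K$ is a relatively compact perturbation (using that its coefficients decay exponentially and that $\mathcal{A}^{-1}\mathcal{B}$ is order zero in the sense of mapping $L^2$ into $L^2$, by Lemma~\ref{theorem:6.1.1}). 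This identifies $\sigma_{\mathrm{ess}}(L)=\sigma_{\mathrm{ess}}(L_+)\cup\sigma_{\mathrm{ess}}(L_-)$.

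Next, I would compute $\sigma_{\mathrm{ess}}(L_\pm)$ by Fourier transform. Since $\mathcal{A}_\pm$ and $\mathcal{B}_\pm$ from \eqref{ABpm} have symbols $1+\varepsilon^2k^2/v_\pm$ and $-1/v_\pm$, the nonlocal factor $\mathcal{A}_\pm^{-1}\mathcal{B}_\pm$ has symbol $-1/(v_\pm+\varepsilon^2k^2)$. A direct computation yields the matrix symbol
\begin{equation*}
\widehat{L_\pm}(ik)=\begin{pmatrix} iks & ik \\ ik\,c_\pm(k) & iks-\dfrac{\nu k^2}{v_\pm}\end{pmatrix},\qquad c_\pm(k):=\frac{(T+1)v_\pm+T\varepsilon^2 k^2}{v_\pm^2(v_\pm+\varepsilon^2 k^2)}>0,
\end{equation*}
and $\sigma_{\mathrm{ess}}(L_\pm)$ is the union over $k\in\mathbb{R}$ of the eigenvalues of $\widehat{L_\pm}(ik)$.

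Setting $\mu=\lambda-iks$, the dispersion relation $\det(\lambda I-\widehat{L_\pm}(ik))=0$ becomes the quadratic
\begin{equation*}
\mu^2+\frac{\nu k^2}{v_\pm}\,\mu+k^2 c_\pm(k)=0.
\end{equation*}
If the discriminant is negative, both roots satisfy $\operatorname{Re}\mu=-\nu k^2/(2v_\pm)\le 0$; if it is nonnegative, the product $k^2c_\pm(k)\ge 0$ and the sum $-\nu k^2/v_\pm\le 0$ force both roots to be nonpositive real. In either case $\operatorname{Re}\mu\le 0$, and $\operatorname{Re}\mu=0$ forces $k=0$, which yields $\mu=0$ and hence $\lambda=0$. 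Since $\operatorname{Re}\lambda=\operatorname{Re}\mu$, this gives $\sigma_{\mathrm{ess}}(L_\pm)\subset\{\operatorname{Re}\lambda\le 0\}$ with intersection $\{0\}$ with the closed right half-plane, and combining the two signs $\pm$ concludes the proof.

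The main technical obstacle is the reduction step, not the symbol computation: one must justify that the nonlocal component $\mathcal{A}^{-1}\mathcal{B}$ and its asymptotic counterparts $\mathcal{A}_\pm^{-1}\mathcal{B}_\pm$ fit into the standard relatively-compact-perturbation framework. The key ingredients are the exponential localization of the difference between $\mathcal{A}^{-1}\mathcal{B}$ and $\mathcal{A}_\pm^{-1}\mathcal{B}_\pm$ (inherited from \eqref{spbound}), together with uniform bounds on $\mathcal{A}^{-1}$ on $L^2$ from Appendix~\ref{App_B}, which together yield the compactness needed to apply the essential-spectrum perturbation theorem. Once that is in place, the rest reduces to the explicit dispersion-relation analysis above.
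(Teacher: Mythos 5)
Your symbol computation and the dispersion-relation analysis are correct, and the sum/product-of-roots argument after the substitution $\mu = \lambda - iks$ is actually a cleaner route than the paper's, which instead computes the roots explicitly and analyzes the sign of the discriminant function $f_\pm$ to obtain the sharper quantitative bound $\operatorname{Re}\lambda_j^\pm(\xi)\le -\theta_0|\xi|^2/(1+|\xi|^2)$ (Lemma~\ref{lemma:dissip}); for the proposition as stated, your weaker conclusion $\operatorname{Re}\lambda_j^\pm(\xi)\le 0$ with equality only at $\xi=0$ suffices, and it lands on the same characteristic polynomial \eqref{chpoly}.

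The reduction step, however, is a genuine gap, and it is handled by a genuinely different mechanism in the paper. You propose the operator-theoretic decomposition $L = L_+\chi_+ + L_-\chi_- + K$ with $K$ relatively compact, and you justify compactness of the nonlocal piece by appealing to ``exponential localization of the difference $\mathcal{A}^{-1}\mathcal{B} - \mathcal{A}_\pm^{-1}\mathcal{B}_\pm$.'' This is not the right concept: $\mathcal{A}^{-1}\mathcal{B}$ is an integral operator whose Green's-function kernel decays off the diagonal like $e^{-c|x-y|}$ but is not supported or concentrated near any fixed region, and the difference with $\mathcal{A}_+^{-1}\mathcal{B}_+$ remains $O(1)$ as $x\to-\infty$ (where the coefficients of $\mathcal{A},\mathcal{B}$ are close to the \emph{other} constant-state values). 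To make an $L^2$ relative-compactness argument work one would need genuine decay estimates on the kernel of $\mathcal{A}^{-1}$ (and a more careful two-sided setup than $L_+\chi_+ + L_-\chi_-$ suggests, since $L_\pm$ are themselves global nonlocal operators), none of which you establish; the reference to Lemma~\ref{theorem:6.1.1} gives boundedness of $\mathcal{A}^{-1}$ but not the compactness you need. The paper avoids this entirely: it substitutes $\phi = \mathcal{A}^{-1}\mathcal{B}v$ and rewrites the eigenvalue equation as the \emph{local} first-order ODE system $W' = \mathbb{A}(x,\lambda)W$ in the augmented variable $W=(v,u,\tilde u',\phi,\phi')^{\mathrm{tr}}$ (see \eqref{5}--\eqref{coordinate}), whose coefficient matrix converges exponentially to the constant matrices $\mathbb{A}_\pm(\lambda)$; the Henry/Kapitula--Promislow argument then applies directly to this first-order system and yields $\sigma_{\mathrm{ess}}(L)\subset\mathbb{C}\setminus\Lambda$. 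You should either adopt this reformulation, or supply the missing compactness analysis for the integral-operator difference, which is considerably more work.
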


The Fourier symbols of the limiting operators $L_\pm$ defined in \eqref{Lpm} are given by
\begin{equation} \label{symbols}
M_\pm(\xi) := i \xi A_\pm - \xi^2 B_\pm + \frac{i\xi^3}{v_\pm+\varepsilon^2\xi^2}E_\pm , \quad \xi\in\mathbb{R}.
\end{equation}
Let $\lambda_j^\pm(\xi)$, $j=1,2$, denote the eigenvalues of \eqref{symbols}, i.e., solutions of the dispersion relation
\begin{equation} \label{drelation}
\det \left( M_\pm(\xi) - \lambda I \right) = 0.
\end{equation}
For each $j$ and $\pm$, let $\Lambda_j^\pm \subset \mathbb{C}$ denote the connected component of $\mathbb{C} \setminus \{\lambda_j^\pm(\xi): \xi \in \mathbb{R}\}$ that contains $\{x \in \mathbb{R}: x > R\}$ for any sufficiently large $R>0$. Define
\begin{equation*}
\Lambda := \bigcap_{\pm\in\{+,-\}} \ \bigcap_{j=1}^2 \Lambda_j^\pm .
\end{equation*}
It then follows from the standard argument of \cite[Chapter~5, pp.~136--142]{Henry} that
\begin{equation*}
\sigma_{\mathrm{ess}}(L)\subset \mathbb{C}\setminus \Lambda .
\end{equation*}
We remark that, although $L$ contains the nonlocal operator $\mathcal{A}^{-1}\mathcal{B}$, the eigenvalue equation $(L-\lambda I)U=0$ can be rewritten as a first-order ODE system using the relation $\phi=\mathcal{A}^{-1}\mathcal{B} v$; see \eqref{5}--\eqref{coordinate}. This reformulation allows us to apply the argument of \cite{Henry}. The following lemma provides bounds on $\lambda_j^\pm$ and thus yields Proposition~\ref{prop:ess}.

\begin{lemma} \label{lemma:dissip}
There exists a constant $\theta_0>0$ such that
\begin{equation} \label{dissip}
\operatorname{Re} \lambda_j^\pm(\xi) \leq - \frac{\theta_0 \lvert \xi \rvert^2}{1 + \lvert \xi \rvert^2}, \quad j=1,2
\end{equation}
for all $\xi \in \mathbb{R}$, where $\lambda_j^\pm(\xi)$ are the eigenvalues of $M_\pm(\xi)$ defined by \eqref{symbols}. In particular, $\lambda_j^\pm(0)=0$ for $j=1,2$.
\end{lemma}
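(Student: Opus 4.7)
The plan is to reduce the dispersion relation \eqref{drelation} to a scalar quadratic with nonnegative real coefficients and then extract \eqref{dissip} from the Routh--Hurwitz structure of that quadratic. Using the far-field limits $\bar{v}\to v_\pm$ and $\bar{u}_x,\bar{v}_x,\bar{\phi}_x\to 0$ in \eqref{ABDE}, the matrices $A_\pm$, $B_\pm$, $E_\pm$ reduce to constants with only a few nontrivial entries, and \eqref{symbols} becomes
\[
M_\pm(\xi) = \begin{pmatrix} i\xi s & i\xi \\ i\xi\,\alpha_\pm(\xi)/v_\pm^2 & i\xi s - \nu\xi^2/v_\pm \end{pmatrix},
\qquad
\alpha_\pm(\xi) := \frac{(T+1)v_\pm + T\varepsilon^2\xi^2}{v_\pm + \varepsilon^2\xi^2},
\]
where the Debye-screening contribution $(i\xi^3/(v_\pm + \varepsilon^2\xi^2))E_\pm$ has been absorbed into the effective acoustic coefficient $\alpha_\pm(\xi)$. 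Crucially, $\alpha_\pm$ is continuous in $\xi$ and satisfies $T \le \alpha_\pm(\xi) \le T+1$ uniformly, interpolating between the quasi-neutral sound speed $\sqrt{T+1}$ at $\xi=0$ and the screened value $\sqrt{T}$ at $|\xi|=\infty$ without ever vanishing.

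Next, I would apply the shift $\mu := \lambda - i\xi s$, which cancels the diagonal imaginary part and reduces \eqref{drelation} to the real-coefficient quadratic
\[
\mu^2 + \frac{\nu \xi^2}{v_\pm}\,\mu + \frac{\xi^2\,\alpha_\pm(\xi)}{v_\pm^2} = 0.
\]
Both coefficients are nonnegative for all $\xi$ and strictly positive for $\xi\ne 0$, so Vieta's formulas force the two roots $\mu_1,\mu_2$ to satisfy $\operatorname{Re}\mu_j \le 0$, with equality only at $\xi = 0$; this already yields $\lambda_j^\pm(0) = 0$.

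To upgrade this to the quantitative bound \eqref{dissip}, I would split according to the sign of the discriminant $\nu^2\xi^2 - 4\alpha_\pm(\xi)$. When it is negative, the roots form a complex-conjugate pair with common real part $-\nu\xi^2/(2v_\pm)$. When it is nonnegative, both roots are real and the larger (less negative) one admits the rationalization
\[
\mu_1 = -\frac{2\xi^2\,\alpha_\pm(\xi)/v_\pm}{\nu\xi^2 + |\xi|\sqrt{\nu^2\xi^2 - 4\alpha_\pm(\xi)}} \le -\frac{\alpha_\pm(\xi)}{\nu\,v_\pm} \le -\frac{T}{\nu\,v_\pm},
\]
using only $|\xi|\sqrt{\nu^2\xi^2 - 4\alpha_\pm(\xi)} \le \nu\xi^2$ in the denominator. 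Since the two regimes match continuously at the discriminant threshold, and $\xi^2/(1+\xi^2)$ interpolates between $O(\xi^2)$ near $\xi=0$ and $1$ at infinity, combining these estimates yields \eqref{dissip} with $\theta_0$ taken to be the minimum of $\nu/(2v_\pm)$ and $T/(\nu v_\pm)$ over $\pm$.

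The main technical point worth flagging is that the nonlocal Poisson contribution $(i\xi^3/(v_\pm + \varepsilon^2\xi^2))E_\pm$ in \eqref{symbols} is \emph{of the same order as the hyperbolic term} $i\xi A_\pm$ at high frequency, so one must verify that the nonlocal coupling does not reverse the sign of the effective sound-speed coefficient and destroy the dissipation. The explicit formula for $\alpha_\pm(\xi)$ displayed above is precisely what is needed: Debye screening lowers the sound speed from $\sqrt{T+1}$ at the low-frequency (quasi-neutral) end to $\sqrt{T}$ at the high-frequency end, but never drives it to zero, so the parabolic structure of the reduced quadratic survives uniformly in $\xi$.
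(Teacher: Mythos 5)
Your proof is correct and arrives at the same two-regime split, the same threshold $\xi_0$, and the same constant $\theta_0=\min\{\nu/(2v_+),\,T/(\nu v_+)\}$ as the paper, so the overall strategy is the same; what differs is the algebraic packaging. The paper works with $f_\pm(\xi)$, the discriminant in $\lambda$ of the characteristic polynomial, passes to an auxiliary quartic $\tilde f_\pm$ and a quadratic $g$ in $\eta=\xi^2$ to locate $\xi_0$, and then for $|\xi|>\xi_0$ rewrites $f_\pm$ as a perfect square minus a positive quantity to deduce $\sqrt{f_\pm}<\nu\xi^2/v_\pm - 2T/(\nu v_\pm)$; this requires a side argument ($\xi_0>\sqrt{2T}/\nu$) to justify dropping an absolute value. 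You instead absorb the Poisson term into the effective acoustic coefficient
\begin{equation*}
\alpha_\pm(\xi)=\frac{(T+1)v_\pm+T\varepsilon^2\xi^2}{v_\pm+\varepsilon^2\xi^2}=T+\frac{v_\pm}{v_\pm+\varepsilon^2\xi^2}\in(T,\,T+1],
\end{equation*}
shift by $\mu=\lambda-is\xi$ to get the real-coefficient quadratic $\mu^2+b\mu+c=0$ with $b=\nu\xi^2/v_\pm$, $c=\xi^2\alpha_\pm/v_\pm^2$, and rationalize the dominant real root as $\mu_1=-2c/(b+\sqrt{b^2-4c})\le -c/b=-\alpha_\pm/(\nu v_\pm)\le -T/(\nu v_\pm)$; this sidesteps the sign check entirely. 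Indeed one checks directly that the paper's $f_\pm(\xi)$ equals your $b^2-4c=\tfrac{\xi^2}{v_\pm^2}\bigl(\nu^2\xi^2-4\alpha_\pm(\xi)\bigr)$, so the two derivations agree quantity-for-quantity. The explicit identification of the Debye-screened sound speed interpolating monotonically from $\sqrt{T+1}$ at $\xi=0$ to $\sqrt{T}$ as $|\xi|\to\infty$ is a nice conceptual point not stated in the paper.
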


\begin{proof}
Substituting the definition \eqref{symbols} of $M_\pm$ into \eqref{drelation}, we have
\begin{equation} \label{chpoly}
\begin{split}
\det (M_\pm(\xi) - \lambda I) & = (si\xi -\lambda)\left(si\xi-\frac{\nu}{v_\pm}\xi^2- \lambda \right)-i\xi \left(\frac{T+1}{v_\pm^2}i\xi - \frac{\varepsilon^2 i \xi^3}{\varepsilon^2 v_\pm^2 \xi^2 + v_\pm^3} \right) \\
& = \lambda^2 - \left(2si\xi - \frac{\nu}{v_\pm} \xi^2 \right)\lambda - s^2 \xi^2 -\frac{\nu}{v_\pm} si\xi^3 + \frac{T+1}{v_\pm^2}\xi^2 - \frac{\varepsilon^2 \xi^4}{\varepsilon^2 v_\pm^2 \xi^2 + v_\pm^3} \\
& =0.
\end{split}
\end{equation}
The roots of \eqref{chpoly} are
\begin{equation} \label{ess}
\lambda_j^\pm (\xi) := \lambda_j (M_\pm) = \left( si\xi - \frac{\nu}{2 v_\pm} \xi^2 \right)  + \frac{(-1)^j}{2} \sqrt{f_\pm(\xi)}, \quad j=1,2,
\end{equation}
where
\begin{equation*}
f_\pm(\xi) := \frac{\nu^2}{v_\pm^2}\xi^4-\frac{4(T+1)}{v_\pm^2}\xi^2 + \frac{4 \varepsilon^2\xi^4}{\varepsilon^2 v_\pm^2 \xi^2 + v_\pm^3}.
\end{equation*}
For notational convenience, for a fixed sign $\pm$, we write $f(\xi)$ in place of $f_\pm(\xi)$, and similarly suppress the $\pm$-dependence in $\tilde{f}, g, \eta_0,$ and $\xi_0$ in what follows. The first term on the right-hand side of \eqref{ess} has real part $-\tfrac{\nu \xi^2}{2v_\pm}$ for all $\xi\in\mathbb{R}$. Thus, to determine the precise real part of $\lambda_j^\pm(\xi)$, it suffices to analyze the second term. For this purpose, we define
\begin{equation*}
\tilde{f}(\xi) := \varepsilon^2 \nu^2 \xi^4 + (\nu^2 v_\pm -4\varepsilon^2 T)\xi^2 - 4(T+1)v_\pm.
\end{equation*}
Then we observe that
\begin{equation*}
f(\xi) = \frac{\xi^2}{\varepsilon^2 v_\pm^2 \xi^2 + v_\pm^3} \tilde{f}(\xi),
\end{equation*}
and so,
\begin{equation} \label{signf}
\operatorname{sgn} (\tilde{f}(\xi)) = \operatorname{sgn}(f(\xi))
\end{equation}
for all $\xi \in \mathbb{R} \setminus \{0 \}$. We therefore analyze $\tilde{f}$ instead of $f$. Set $\eta := \xi^2 \geq 0$. Then the equation $\tilde{f}(\xi) = 0$ is equivalent to
\begin{equation} \label{etaeq}
g(\eta) := \varepsilon^2 \nu^2 \eta^2 + (\nu^2 v_\pm -4\varepsilon^2 T)\eta - 4(T+1)v_\pm = 0.
\end{equation}
The positive root $\eta_0$ of \eqref{etaeq} is given by
\begin{equation*}
\eta_0 = \frac{2T}{\nu^2} - \frac{v_\pm}{2\varepsilon^2} + \frac{ \sqrt{(\nu^2v_\pm -4\varepsilon^2T)^2 +16\varepsilon^2\nu^2 v_\pm (T+1) }}{2\varepsilon^2\nu^2}.
\end{equation*}
Moreover, the function $g$ is convex and satisfies
\begin{equation*}
g(0) = - 4(T+1)v_\pm <0.
\end{equation*}
Hence $g(\eta) < 0 $ for $0 < \eta < \eta_0$ and $g(\eta) >0$ for $\eta > \eta_0$. Returning to the $\xi$-variable, with $\xi_0 := \sqrt{\eta_0}$, we have
\begin{equation*}
\tilde{f}(\xi) < 0 \quad \text{for } |\xi| < \xi_0, \quad \tilde{f}(\xi) >0 \quad \text{for } |\xi| > \xi_0.
\end{equation*}
Combined with \eqref{signf}, this implies that $\textstyle \sqrt{f(\xi)}$ is real for $|\xi|>\xi_0$, purely imaginary for $|\xi|<\xi_0$, and vanishes at $|\xi|=\xi_0$. Recalling \eqref{ess}, we obtain
\begin{equation} \label{ReLj}
\operatorname{Re}\lambda_j^\pm (\xi) =
\begin{cases}
- \frac{\nu}{2 v_\pm} \xi^2  + \frac{(-1)^j}{2} \sqrt{f(\xi)} & \quad  |\xi| > \xi_0, \\
 -\frac{\nu}{2v_\pm} \xi^2 & \quad  |\xi| \leq \xi_0.
\end{cases}
\end{equation}
We now show that the curves $\lambda_j^\pm$ satisfy \eqref{dissip}. For $ \xi \in [-\xi_0, \xi_0]$, we have
\begin{equation} \label{inest}
\operatorname{Re}\lambda_j^\pm (\xi) = -\frac{\nu \xi^2}{2v_\pm} \leq -\frac{\nu \lvert \xi \rvert^2}{2v_\pm(1+\lvert \xi \rvert^2)} = -\frac{\theta_1 \lvert \xi \rvert^2}{1+\lvert \xi \rvert^2},
\end{equation}
where we set $ \textstyle \theta_1 := \frac{\nu}{2v_\pm}$. We next consider the case $|\xi| > \xi_0$. We observe that the function $f(\xi)$ can be rewritten as
\begin{equation*}
\begin{split}
f(\xi) & = \left( \frac{\nu}{v_\pm} \xi^2 - \frac{2T}{\nu v_\pm} \right)^2 - \frac{4T^2}{\nu^2 v_\pm^2} - \frac{4 \xi^2}{v_\pm^2} + \frac{4\varepsilon^2 \xi^4}{\varepsilon^2 v_\pm^2 \xi^2 + v_\pm^3} \\
& = \left( \frac{\nu}{v_\pm} \xi^2 - \frac{2T}{\nu v_\pm} \right)^2 - \frac{4T^2}{\nu^2 v_\pm^2} - \frac{4 \xi^2}{\varepsilon^2 v_\pm \xi^2 + v_\pm^2}.
\end{split}
\end{equation*}
From this representation, we observe that
\begin{equation*}
f(\xi) < \left( \frac{\nu}{v_\pm} \xi^2 - \frac{2T}{\nu v_\pm} \right)^2,
\qquad \xi \neq 0.
\end{equation*}
Since $f(\xi)>0$ for $|\xi|>\xi_0$, the square root $\sqrt{f(\xi)}$ is real in this region, and hence
\begin{equation} \label{absva}
0 \le \sqrt{f(\xi)} < \left| \frac{\nu}{v_\pm} \xi^2 - \frac{2T}{\nu v_\pm} \right|.
\end{equation}
We now note that $\textstyle \xi_0 > \frac{\sqrt{2T}}{\nu}$. Indeed, setting $\textstyle \eta_* := \frac{2T}{\nu^2}$, a direct computation yields
\begin{equation*}
g(\eta_*) = - \frac{4\varepsilon^2 T^2}{\nu^2} - 2v_\pm (T+2) < 0.
\end{equation*}
Since $g$ is convex and has a unique positive zero $\eta_0$, it follows that $\eta_0 > \eta_*$, and thus $\textstyle \xi_0 = \sqrt{\eta_0} > \frac{\sqrt{2T}}{\nu}$. Consequently, the quantity inside the absolute value on the right-hand side of \eqref{absva} is positive for $|\xi|>\xi_0$, and we conclude that
\begin{equation} \label{tempf}
\sqrt{f(\xi)} < \frac{\nu}{v_\pm} \xi^2 - \frac{2T}{\nu v_\pm},
\qquad |\xi|>\xi_0.
\end{equation}
Substituting \eqref{tempf} into \eqref{ReLj} with $j=2$, we have the uniform bound
\begin{equation*}
\operatorname{Re} \lambda_2^\pm(\xi) < - \frac{T}{\nu v_\pm}.
\end{equation*}
As $\operatorname{Re}\lambda_1^\pm \leq \operatorname{Re}\lambda_2^\pm$, we obtain
\begin{equation*}
\operatorname{Re}\lambda_j^\pm (\xi) < - \frac{T}{\nu v_\pm} \leq - \frac{T |\xi|^2}{\nu v_\pm (1 + |\xi|^2)}, \quad |\xi| > \xi_0,
\end{equation*}
which, together with \eqref{inest} for $|\xi| \leq \xi_0$, yields \eqref{dissip} with some $\theta_0>0$. Specifically, in the 2-shock case, one can choose
\begin{equation*}
\theta_0 = \min \left\{ \frac{\nu}{2v_+},\frac{T}{\nu v_+} \right\},
\end{equation*}
which is valid for both signs $\pm$.
\end{proof}

\section{Point spectrum bounds} \label{sec:Point}

This section is devoted to establishing the nonexistence of $L^2$-eigenvalues of the linearized operator $L$ in the deleted unstable half-plane $\Omega := \{\lambda \in \mathbb{C} : \operatorname{Re}\lambda \geq 0  \} \setminus \{0\} $. The precise statement is given in the proposition below. Together with Proposition~\ref{prop:ess}, this proves \eqref{sstability} in Theorem~\ref{Main}.

\begin{proposition} \label{Prop:3.1}
Under the assumptions in Theorem~\ref{Main}, we have
\begin{equation*}
\sigma_{\mathrm{pt}}(L) \subset \mathbb{C} \setminus \Omega,
\end{equation*}
where $\sigma_{\mathrm{pt}}(L)$ is the point spectrum of the linearized operator $L$ on $L^2$, and $\Omega = \{\lambda \in \mathbb{C} : \operatorname{Re}\lambda \geq 0  \} \setminus \{0\} $.
\end{proposition}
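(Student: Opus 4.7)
The plan is to adapt the classical anti-derivative (Goodman-type) approach to the nonlocal NSP setting, as outlined in Section~\ref{Outline}. Fix $\lambda\in\Omega$ and suppose $U=(v,u)^{\mathrm{tr}}\in L^2\times L^2$ is an eigenfunction of $L$ with eigenvalue $\lambda$. Proposition~\ref{prop:ess} places $\Omega$ in the region of consistent splitting for the limiting constant-coefficient problems, so after rewriting $(L-\lambda)U=0$ as a first-order ODE system (with the nonlocal relation $\phi=\mathcal{A}^{-1}\mathcal{B}v$ resolved locally), the eigenfunction decays exponentially as $x\to\pm\infty$. Introducing primitives $V,U$ with $V_x=v$, $U_x=u$ vanishing at $-\infty$, integration of \eqref{LNSP1} yields $\lambda V=sV_x+U_x$; evaluating this identity at $+\infty$ and using $\lambda\neq 0$ forces $V(+\infty)=0$ as well. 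Integrating the divergence-form momentum equation \eqref{LNSP2} once in $x$ and eliminating $\phi$ through $\phi=\mathcal{A}^{-1}\mathcal{B}V_x$ produces an equivalent integrated eigenvalue problem $\lambda W=\mathcal{L}W$ for $W=(V,U)^{\mathrm{tr}}\in H^1\times H^1$ on $\Omega$.

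Next, I would carry out a weighted $L^2$ energy estimate for $W$. Taking the complex inner product of the integrated $V$-equation with $\alpha\bar{v}^{-1}V$ and of the integrated $U$-equation (used in non-divergence form via \eqref{LNSP2'}) with $\beta\bar{v}^{-1}U$, with positive constants $\alpha,\beta$ chosen so that the constant-coefficient hyperbolic cross terms combine skew-symmetrically, and taking real parts, yields an identity of the schematic form
\begin{equation*}
\operatorname{Re}\lambda \int \bar{v}^{-1}\bigl(\alpha|V|^2+\beta|U|^2\bigr)\,dx + \nu\int \bar{v}^{-2}|U_x|^2\,dx + \mathcal{P}[V] = \mathcal{R}[V,U],
\end{equation*}
where $\mathcal{P}[V]\ge 0$ collects the Poisson contribution (non-negativity following from the Helmholtz-type coercivity of $\mathcal{A}$ established in Appendix~\ref{App_B}), and every term in the remainder $\mathcal{R}[V,U]$ carries a factor of $\bar{v}_x$, $\bar{u}_x$, or $\bar{\phi}_x$. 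By the profile derivative bounds \eqref{spbound} these factors are $O(\delta_S^2)$, so $|\mathcal{R}|\le C\delta_S(\|V_x\|_{L^2}^2+\|U_x\|_{L^2}^2)$.

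This $L^2$ estimate controls $\|U_x\|_{L^2}$ but not $\|V_x\|_{L^2}$ or the third-order contribution from $\mathcal{A}^{-1}\mathcal{B}V_x$. To close the argument, I would combine it with a parallel $H^1$-level energy estimate obtained by differentiating the integrated equations once and repeating the computation; this is the higher-order estimate deferred to the appendix. Adding the $L^2$ and $H^1$ identities and choosing $\delta_S\le\delta_1$ small enough to absorb $\mathcal{R}$ gives
\begin{equation*}
\operatorname{Re}\lambda\,\|W\|_{L^2}^2 + c\bigl(\|W_x\|_{L^2}^2+\|W_{xx}\|_{L^2}^2\bigr) \le 0
\end{equation*}
for a positive constant $c$. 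Since $\operatorname{Re}\lambda\ge 0$ on $\Omega$, this forces $W_x\equiv 0$; the relation $\lambda V=sV_x+U_x$ with $\lambda\neq 0$ then gives $W\equiv 0$, contradicting nontriviality, so $\sigma_{\mathrm{pt}}(L)\cap\Omega=\emptyset$.

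The main obstacle I anticipate is the nonlocal Poisson coupling. Translated to the integrated variables, $\mathcal{A}^{-1}\mathcal{B}V_x$ acts as an effective third-order pseudodifferential operator that is not tame under a plain $L^2$ estimate and therefore forces coupling to an auxiliary higher-order estimate. Showing $\mathcal{P}[V]\ge 0$, or at least its positivity up to lower-order terms absorbable by $\delta_S$, relies crucially on the elliptic structure of $\mathcal{A}$ and on symmetrizing the full integrated operator $\mathcal{L}$ with the correct weights; tuning $\alpha,\beta$ and these weights so that the hyperbolic, viscous, and Poisson contributions all cooperate is the delicate technical step. A secondary subtle point is the case $\operatorname{Re}\lambda=0$ with $\lambda\neq 0$: there the damping term $\operatorname{Re}\lambda\|W\|_{L^2}^2$ vanishes and the argument must close entirely through the viscous and Poisson dissipation together with the reconstruction $V=(sV_x+U_x)/\lambda$.
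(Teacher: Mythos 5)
Your proposal follows essentially the same approach as the paper: pass to anti-derivative (integrated) variables, verify these remain in $L^2$ via exponential decay of eigenfunctions together with the zero-mean identities forced by $\lambda\neq 0$, carry out a weighted $L^2$ energy estimate supplemented by higher-order estimates, and absorb all profile-dependent remainder terms using the small-amplitude bounds \eqref{spbound}. The paper's specific multipliers ($(T+1)\overline{\Phi}$ and $\hat v^2\overline{\Psi}$, applied to the divergence form \eqref{3.3b}) are tuned so the hyperbolic cross terms cancel exactly rather than merely up to $O(\delta_S)$, and positivity of the Poisson contribution is obtained from a quadratic-form discriminant argument using the quasi-neutrality bound $|1-e^{-\hat\phi}/\hat v|\le C\delta_S$ rather than from a direct coercivity of $\mathcal{A}$, but these are technical variants of the same strategy you describe.
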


Throughout this section, we denote the shock profile $(\bar{v},\bar{u},\bar{\phi})$ by $(\hat{v},\hat{u},\hat{\phi})$ to avoid confusion with complex conjugation. Using the relation $\phi=\mathcal{A}^{-1}\mathcal{B}v$, we rewrite the eigenvalue equation $(L-\lambda I)U=0$ in the extended form:
\begin{equation} \label{EE''}
\begin{split}
&\lambda v - s v_x - u_x =0, \\
&\lambda u - s u_x -(T+1) \left( \frac{v}{\hat{v}^2} \right)_x - \nu \left(\frac{u_x}{\hat{v}} - \frac{\hat{u}_x}{\hat{v}^2}v \right)_x \\
& \qquad = \varepsilon^2 \bigg( \frac{\hat{\phi}_x}{\hat{v}^2}\phi_x -\frac{\hat{\phi}_x^2}{\hat{v}^3} v \bigg)_x -\varepsilon^2 \bigg[ \bigg( -\frac{2\hat{\phi}_{xx}}{\hat{v}^3} +\frac{3\hat{v}_x\hat{\phi}_x}{\hat{v}^4} \bigg) v -\frac{\hat{\phi}_x}{\hat{v}^3}v_x -\frac{\hat{v}_x}{\hat{v}^3} \phi_x +\frac{\phi_{xx}}{\hat{v}^2} \bigg]_x, \\
&-\varepsilon^2 \bigg( \frac{\phi_x}{\hat{v}} - \frac{\hat{\phi}_x}{\hat{v}^2} v \bigg)_x = - e^{\hat{\phi}}v - \hat{v} e^{\hat{\phi}} \phi.
\end{split}
\end{equation}
Following \cite{G1, KMN, MN, ZH}, we introduce the anti-derivative variables
\begin{equation} \label{antiv}
\Phi (x) := \int_{-\infty}^x v(z) \, dz, \quad \Psi(x) := \int_{-\infty}^x u(z) \, dz.
\end{equation}
Then $(\Phi,\Psi,\phi)$ satisfies
\begin{subequations} \label{3.3}
\begin{align}
& \lambda \Phi - s \Phi_x - \Psi_x =0, \label{3.3a} \\
\begin{split} 
& \lambda \Psi - s \Psi_x - \frac{(T+1)}{\hat{v}^2}\Phi_x - \frac{\nu}{\hat{v}}\Psi_{xx} = - \frac{\nu \hat{u}_x}{\hat{v}^2}\Phi_x + \varepsilon^2 \bigg( \frac{\hat{\phi}_x}{\hat{v}^2}\phi_x - \frac{\hat{\phi}_x^2}{\hat{v}^3} \Phi_x \bigg) \\
& \qquad  -\varepsilon^2 \bigg[ \bigg( -\frac{2\hat{\phi}_{xx}}{\hat{v}^3} +\frac{3\hat{\phi}_x\hat{v}_x}{\hat{v}^4} \bigg)
\Phi_x -\frac{\hat{\phi}_x}{\hat{v}^3}\Phi_{xx} -\frac{\hat{v}_x}{\hat{v}^3}\phi_x +\frac{\phi_{xx}}{\hat{v}^2} \bigg],
\end{split} \label{3.3b} \\
& -\varepsilon^2 \bigg( \frac{\phi_x}{\hat{v}} - \frac{\hat{\phi}_x}{\hat{v}^2} \Phi_x \bigg)_x =  - e^{\hat{\phi}} \Phi_x - \hat{v} e^{\hat{\phi}} \phi \label{3.3c}.
\end{align}
\end{subequations}
We define the \emph{integrated} operator $\mathcal{L}$ associated with \eqref{3.3} by
\begin{equation*}
\mathcal{L} V := AV_x + BV_{xx} + D (\mathcal{A}^{-1}\mathcal{B}V_x)_x + E (\mathcal{A}^{-1}\mathcal{B}V_x)_{xx},
\end{equation*}
where $V := (\Phi,\Psi)^{\mathrm{tr}}$ and $A,B,D$, and $E$ are as in \eqref{ABDE}. To prove Proposition~\ref{Prop:3.1}, we first observe the following.

Let $\lambda\in\Omega$. Then $\lambda\in\sigma_{\mathrm{pt}}(L)$ if and only if $\lambda\in\sigma_{\mathrm{pt}}(\mathcal{L})$. More precisely, if $(L-\lambda I)U=0$ has an $L^2$-eigenfunction $U=(v,u)$, then setting $V=(\Phi,\Psi)$ with $\textstyle \Phi(x)=\int_{-\infty}^x v$, $\textstyle \Psi(x)=\int_{-\infty}^x u$ yields an $L^2$-eigenfunction of $(\mathcal{L}-\lambda I)V=0$; conversely, differentiating an $L^2$-eigenfunction $V$ of $\mathcal{L}$ gives an $L^2$-eigenfunction $U=V_x$ of $L$. By standard considerations related to Proposition~\ref{prop:ess}, the essential spectrum bounds imply that for any eigenvalue $\lambda\in\Omega$, the corresponding $L^2$-eigenfunction of $(L-\lambda I)U=0$ decays exponentially as $x\to\pm\infty$ (see, e.g., \cite{GZ,ZH}). Integrating the first two equations of \eqref{EE''} over $\mathbb{R}$, we obtain
\begin{equation*}
\begin{split}
\int_\mathbb{R} v(z) \, dz = \int_\mathbb{R} u(z) \, dz = 0
\end{split}
\end{equation*}
for $\lambda \in \Omega$. This gives
\begin{equation*}
\Phi(x) = \int_{-\infty}^x v(z) \, dz = - \int_x^{\infty} v(z) \, dz, \quad \Psi(x) = \int_{-\infty}^x u(z) \, dz = -\int_x^{\infty} u(z) \, dz.
\end{equation*}
Hence, $\Phi$ and $\Psi$ also decay exponentially as $x \to \pm \infty$, and thus belong to $L^2(\mathbb{R})$. Moreover, by unique solvability of \eqref{3.3c} we have $\phi=\mathcal A^{-1}\mathcal{B} \Phi_x$, so \eqref{3.3a}--\eqref{3.3c} is equivalent to $(\mathcal L-\lambda I)V=0$.

We now carry out energy estimates for \eqref{3.3a}--\eqref{3.3c} on the unstable half-plane $\Omega$. The following lemma can be viewed as a linearized, complex-valued analogue of the energy estimates in \cite[Section~4.2]{DLZ}, which were used to establish nonlinear stability for zero mass perturbations.

\begin{lemma} \label{Lemma:3.2}
Assume that the hypotheses of Theorem~\ref{Main} hold, and denote the shock profile $(\bar{v},\bar{u},\bar{\phi})$ by $(\hat{v},\hat{u},\hat{\phi})$. Suppose that $\lambda \in \Omega $ is an eigenvalue of the integrated operator $\mathcal{L}$ with the corresponding eigenfunction $(\Phi,\Psi) \in H^2(\mathbb{R}) \times H^3(\mathbb{R})$. Then it holds that
\begin{equation} \label{3.4}
\begin{split}
& \operatorname{Re}{\lambda} \lVert (\Phi,\Psi) \rVert_{H^2(\mathbb{R})}^2 + c \left( \lVert \lvert \hat{v}_x \rvert^{1/2} \Psi \rVert_{L^2(\mathbb{R})}^2 + \lVert \Phi_x \rVert_{H^1(\mathbb{R})}^2 + \lVert \Psi_x \rVert_{H^2(\mathbb{R})}^2 \right) \leq 0
\end{split}
\end{equation}
for some constant $c>0$.
\end{lemma}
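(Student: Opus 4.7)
The plan is to derive \eqref{3.4} through a hierarchy of spectral $L^2$ energy estimates for the integrated system \eqref{3.3a}--\eqref{3.3c}, summed across derivative orders up to two and closed via the smallness $\delta_S\ll 1$ from Lemma~\ref{SP}. The argument is a complex-valued linear analogue of the nonlinear estimates in \cite[Section~4.2]{DLZ}, complicated by the nonlocal coupling $\phi=\mathcal{A}^{-1}\mathcal{B}\Phi_x$ provided by \eqref{3.3c}.

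\textbf{Zeroth-order estimate and the $\hat v_x$-weighted term.} First I would pair \eqref{3.3a} with $\frac{T+1}{\hat v^2}\bar\Phi$ and \eqref{3.3b} with $\bar\Psi$, add, integrate over $\mathbb{R}$, and take real parts. This produces the quantity $\operatorname{Re}\lambda\int\bigl(|\Psi|^2+\frac{T+1}{\hat v^2}|\Phi|^2\bigr)\,dx$, the parabolic dissipation $\int(\nu/\hat v)|\Psi_x|^2\,dx$, and, after integration by parts on the convective terms, a hyperbolic contribution proportional to $\int(\hat v_x/\hat v^3)|\Phi|^2\,dx$ of favorable sign because $s>0$ and $\hat v_x>0$ by \eqref{Lax}. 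To produce the weighted term $\int|\hat v_x||\Psi|^2\,dx$ appearing in \eqref{3.4}, I would then apply a second multiplier $(\hat v-v_-)\bar\Psi$ to \eqref{3.3b}: the convective contribution $-s(\hat v-v_-)\operatorname{Re}(\Psi_x\bar\Psi)=-\frac{s}{2}(\hat v-v_-)(|\Psi|^2)_x$, after integrating by parts, gives exactly $\frac{s}{2}\int\hat v_x|\Psi|^2\,dx$ with the right sign, and the other contributions of this multiplier carry a bounded prefactor $(\hat v-v_-)=O(\delta_S)$ and are absorbable into the previously obtained dissipation. The nonlocal terms $\phi,\phi_x,\phi_{xx}$ in \eqref{3.3b} are handled by pairing \eqref{3.3c} and its differentiations with $\bar\phi$ to obtain elliptic bounds of the form $\|\phi\|_{H^k}\le C\|\Phi_x\|_{H^{k-1}}$, which lets every $\phi$-contribution be absorbed into $\|\Phi_x\|_{H^1}^2$ modulo $O(\delta_S)$ errors.

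\textbf{Higher-order estimates.} Differentiating \eqref{3.3a}--\eqref{3.3c} once and repeating the multiplier procedure on $\Phi_x,\Psi_x$ gives control of $\|\Phi_{xx}\|_{L^2}^2$ and $\|\Psi_{xx}\|_{L^2}^2$; differentiating once more and repeating on $\Psi_{xx}$ gives $\|\Psi_{xxx}\|_{L^2}^2$. Commutator terms generated by differentiating the $x$-dependent coefficients are products of profile derivatives $\hat v_x,\hat v_{xx},\hat u_x,\hat\phi_x$, which by Lemma~\ref{SP} are of size $O(\delta_S)$ in $L^\infty$ and therefore absorbable via Cauchy--Schwarz. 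Summing the zeroth-, first-, and second-order estimates with suitably small balancing constants and taking $\delta_S<\delta_1$ small enough to absorb all residual $O(\delta_S)$ errors then yields \eqref{3.4}.

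\textbf{Main obstacle.} The principal difficulty is the uniform handling of the nonlocal term $\phi=\mathcal{A}^{-1}\mathcal{B}\Phi_x$ at every derivative order. In particular, the last term of \eqref{3.3b}, $-\varepsilon^2\phi_{xx}/\hat v^2$, has the same differential order as $\Phi_{xx}$, so the two-derivative gain provided by the elliptic estimate for \eqref{3.3c} must be exploited in a way that is uniform across all three orders and does not destroy the parabolic dissipation on $\Psi_x$ or create top-order errors that cannot be absorbed for small $\delta_S$. A secondary bookkeeping difficulty is that the profile-dependent coefficients in \eqref{3.3b} are themselves written in terms of $\hat v,\hat u,\hat\phi$ through the nonlinear profile Poisson equation; the interplay between the hyperbolic cross terms and the nonlocal terms at each order must therefore be tracked carefully to ensure the final inequality attains the clean form \eqref{3.4} required for the contradiction argument in Proposition~\ref{Prop:3.1}.
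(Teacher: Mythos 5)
Your overall strategy --- integrated spectral energy estimates in the Goodman/Matsumura--Nishihara spirit, closed for small $\delta_S$, plus elliptic control of the nonlocal $\phi=\mathcal A^{-1}\mathcal B\Phi_x$ --- is the right family of ideas and is also what the paper does. But your zeroth-order multiplier choice contains a genuine error that would break the argument.

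You propose pairing \eqref{3.3a} with $\frac{T+1}{\hat v^2}\overline{\Phi}$ and \eqref{3.3b} with $\overline{\Psi}$, and you assert that integration by parts on the convective term produces ``a hyperbolic contribution proportional to $\int(\hat v_x/\hat v^3)|\Phi|^2\,dx$ of favorable sign.'' This sign claim is wrong. Carrying out the integration by parts, using $\operatorname{Re}(\Phi_x\overline{\Phi})=\tfrac12(|\Phi|^2)_x$, one finds
\begin{equation*}
-s\int_{\mathbb{R}}\frac{T+1}{\hat v^2}\operatorname{Re}(\Phi_x\overline{\Phi})\,dx
=\frac{s}{2}\int_{\mathbb{R}}\Big(\frac{T+1}{\hat v^2}\Big)_x|\Phi|^2\,dx
=-s(T+1)\int_{\mathbb{R}}\frac{\hat v_x}{\hat v^3}|\Phi|^2\,dx,
\end{equation*}
which is \emph{negative} because $s>0$ and $\hat v_x>0$ for a $2$-shock. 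A negative term of this form cannot be absorbed: the dissipation you can generate controls only $\|\Phi_x\|_{H^1}^2$ and $\|\Psi_x\|_{H^2}^2$, never $\|\Phi\|_{L^2}^2$, and $\operatorname{Re}\lambda\,\|\Phi\|_{L^2}^2$ vanishes on the imaginary axis. In addition, with your unbalanced pair of weights the two cross terms $-\int\frac{T+1}{\hat v^2}\overline{\Phi}\Psi_x$ and $-\int\frac{T+1}{\hat v^2}\overline{\Psi}\Phi_x$ do \emph{not} sum to a vanishing integral; they leave a residual $\int\big(\tfrac{T+1}{\hat v^2}\big)_x\operatorname{Re}(\Psi\overline{\Phi})\,dx$ involving the uncontrolled $L^2$ norms of $\Phi$ and $\Psi$, whose coefficient relative to the good term $\int s\hat v\hat v_x|\Psi|^2$ is not small (it scales like $T$). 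The paper avoids both problems by multiplying \eqref{3.3a} by $(T+1)\overline{\Phi}$ and \eqref{3.3b} by $\hat v^2\overline{\Psi}$: then the $\Phi$-convective term integrates to zero (constant coefficient), the cross terms sum to $-(T+1)\int\big(\operatorname{Re}(\Psi\overline{\Phi})\big)_x\,dx=0$, and the $\Psi$-convective term directly yields $+\int s\hat v\hat v_x|\Psi|^2\,dx$, so no secondary weighted multiplier is needed. Your fallback multiplier $(\hat v-v_-)\overline{\Psi}$ does recover a $\int\hat v_x|\Psi|^2$ term, but it introduces a new unpaired cross term $-\int\frac{T+1}{\hat v^2}(\hat v-v_-)\Phi_x\overline{\Psi}\,dx$, and bounding this requires $\|\Psi\|_{L^2}$ (or $\|(\hat v-v_-)\Psi\|_{L^2}$, which cannot be controlled by $\|\hat v_x^{1/2}\Psi\|_{L^2}$ since $(\hat v-v_-)^2/\hat v_x$ is unbounded as $x\to+\infty$). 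Finally, your treatment of the nonlocal $\phi$-terms is too schematic: the paper does not merely use elliptic bounds $\|\phi\|_{H^k}\lesssim\|\Phi_x\|_{H^{k-1}}$, but must add a carefully chosen Poisson multiplier (e.g.\ $\varepsilon^2\lambda e^{-\hat\phi}\overline{\phi}$ applied to \eqref{3.3c}) to turn the cross term $-\int\varepsilon^2\operatorname{Re}(\lambda\Phi\overline{\phi_x})$ into part of a quadratic form whose positive-definiteness rests on the profile identity \eqref{3.15}; without this the $\operatorname{Re}\lambda$-prefactor on the left would not be of definite sign.
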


Lemma~\ref{Lemma:3.2} implies Proposition~\ref{Prop:3.1}, since \eqref{3.4} yields $(\Phi,\Psi)\equiv(0,0)$, contradicting the assumption that $\lambda$ is an eigenvalue of $\mathcal{L}$ in $\Omega$.

\begin{proof}
Throughout the subsequent computations, we denote by $C>0$ a generic constant, and by $\eta>0$ an arbitrarily small parameter (arising from applications of Young's inequality). Both may vary from line to line but remain independent of the eigenvalue $\lambda$ and the eigenfunction $(\Phi,\Psi)$.

We first derive a zeroth-order estimate. Multiplying \eqref{3.3a} by $(T+1)\overline{\Phi}$ and \eqref{3.3b} by $\hat{v}^2\overline{\Psi}$, respectively, and integrating over $\mathbb{R}$, we obtain
\begin{subequations}
\begin{align*}
& \lambda \int_\mathbb{R} (T+1) |\Phi|^2 \, dx - \int_\mathbb{R} (T+1) \overline{\Phi}\Psi_x \, dx =0, \\
\begin{split}
& \lambda \int_\mathbb{R} \hat{v}^2 |\Psi|^2 \, dx - \int_\mathbb{R} s \hat{v}^2 \overline{\Psi}\Psi_x \, dx - \int_\mathbb{R} (T+1) \overline{\Psi}\Phi_x \, dx - \int_\mathbb{R} \nu \hat{v} \overline{\Psi} \Psi_{xx} \, dx \\
& \quad = - \int_\mathbb{R} \nu \hat{u}_x \overline{\Psi}\Phi_x  \, dx + \int_\mathbb{R} \varepsilon^2 \overline{\Psi} \bigg( \hat{\phi}_x \phi_x -\frac{\hat{\phi}_x^2}{\hat{v}} \Phi_x \bigg) \, dx \\
& \qquad  - \int_\mathbb{R} \varepsilon^2 \overline{\Psi} \bigg[ \bigg( -\frac{2\hat{\phi}_{xx}}{\hat{v}} +\frac{3\hat{\phi}_x\hat{v}_x}{\hat{v}^2}\bigg)\Phi_x -\frac{\hat{\phi}_x}{\hat{v}}\Phi_{xx} -\frac{\hat{v}_x}{\hat{v}}\phi_x +\phi_{xx} \bigg] \, dx.
\end{split}
\end{align*}
\end{subequations}
Combining these two identities and taking the real part, we obtain, after integration by parts,
\begin{equation} \label{3.6}
\begin{split}
&\operatorname{Re}{\lambda} \int_\mathbb{R} \left( (T+1) \lvert \Phi \rvert^2 + \hat{v}^2 \lvert \Psi \rvert^2 \right) \, dx + \int_\mathbb{R} s\hat{v}\hat{v}_x \lvert \Psi \rvert^2 \, dx + \int_\mathbb{R} \nu \hat{v} \lvert \Psi_x \rvert^2 \, dx \\
& \quad  = - \int_\mathbb{R} \nu \hat{v}_x \operatorname{Re}{(\overline{\Psi}\Psi_x)} \, dx - \int_\mathbb{R} \nu \hat{u}_{x}\operatorname{Re}{( \overline{\Psi}\Phi_x)} \, dx + I,
\end{split}
\end{equation}
where
\begin{equation*}
I = \int_\mathbb{R} \varepsilon^2 \overline{\Psi} \bigg[ \bigg(\frac{2\hat{\phi}_{xx}}{\hat{v}}- \frac{\hat{\phi}_x^2}{\hat{v}} -\frac{3\hat{\phi}_x\hat{v}_x}{\hat{v}^2} \bigg)\Phi_x +\frac{\hat{\phi}_x}{\hat{v}}\Phi_{xx} + \bigg( \hat{\phi}_x +\frac{\hat{v}_x}{\hat{v}} \bigg) \phi_x -\phi_{xx} \bigg] \, dx.
\end{equation*}
The first two terms on the right-hand side of \eqref{3.6} can be estimated by applying Young's inequality, together with the bounds \eqref{sprel}--\eqref{spbound} and the positivity of the parameters, as
\begin{equation} \label{3.7}
\begin{split}
& \left| - \int_\mathbb{R} \nu \hat{v}_x \operatorname{Re}{(\overline{\Psi}\Psi_x)} \, dx - \int_\mathbb{R} \nu \hat{u}_{x} \operatorname{Re}{( \overline{\Psi}\Phi_x)} \, dx \right|\\
& \quad \leq \eta \int_\mathbb{R} s \hat{v} \hat{v}_x |\Psi|^2 \, dx + \frac{C}{\eta} \delta_S \int_\mathbb{R} \left( \lvert \Psi_x \rvert^2 +\lvert \Phi_x \rvert^2 \right) \, dx,
\end{split}
\end{equation}
where $\delta_S = |v_+-v_-|$ as denoted in Lemma~\ref{SP}. Similarly, the remaining term $I$ satisfies
\begin{equation} \label{3.8}
\begin{split}
I \leq \eta \int_\mathbb{R} s\hat{v}\hat{v}_x |\Psi|^2 \, dx + \frac{C}{\eta} \delta_S \int_\mathbb{R} \left( |\Phi_x|^2 + |\Phi_{xx}|^2 + |\phi_x|^2 \right) \, dx - \int_\mathbb{R} \varepsilon^2 \operatorname{Re}{(\overline{\Psi} \phi_{xx})} \, dx.
\end{split}
\end{equation}
To estimate the last term, we rewrite it using \eqref{3.3a}:
\begin{equation} \label{3.9}
\begin{split}
- \int_\mathbb{R} \varepsilon^2 \operatorname{Re}{\left( \overline{\Psi}\phi_{xx} \right)} \, dx & = \int_\mathbb{R} \varepsilon^2 \operatorname{Re}{(\Psi_x \overline{\phi_x})} \, dx \\
& = \int_\mathbb{R} \varepsilon^2 \operatorname{Re}{(\lambda \Phi \overline{\phi_x})} \, dx - \int_\mathbb{R} s \varepsilon^2 \operatorname{Re}{( \Phi_x \overline{\phi_x})} \, dx.
\end{split}
\end{equation}
The second term can be further rewritten by using \eqref{3.3c}. From \eqref{3.3c}, we have
\begin{equation*}
\Phi_x = - \hat{v} \phi + \varepsilon^2 e^{-\hat{\phi}} \bigg( \frac{\phi_x}{\hat{v}} - \frac{\hat{\phi}_x}{\hat{v}^2} \Phi_x \bigg)_x.
\end{equation*}
Substituting this identity into the second term above, we obtain
\begin{equation*}
\begin{split}
- \int_\mathbb{R} s \varepsilon^2 \operatorname{Re}{(\Phi_x \overline{\phi_x})} \, dx & = \int_\mathbb{R} s \varepsilon^2 \hat{v} \operatorname{Re}{(\phi \overline{\phi_x})} \, dx - \int_\mathbb{R} \frac{s \varepsilon^4 e^{-\hat{\phi}}}{ \hat{v}} \operatorname{Re}{( \phi_{xx} \overline{\phi_x})} \, dx \\
& \quad + \int_\mathbb{R} s \varepsilon^4 e^{-\hat{\phi}} \operatorname{Re}{ \bigg[ \overline{\phi_x} \bigg( \frac{\hat{v}_x\phi_x}{\hat{v}^2} + \bigg( \frac{\hat{\phi}_x}{\hat{v}^2} \bigg)_x \Phi_x + \frac{\hat{\phi}_x}{\hat{v}^2} \Phi_{xx} \bigg) \bigg]} \, dx \\
& = - \int_\mathbb{R} \frac{s \varepsilon^2 \hat{v}_x}{2} |\phi|^2 \, dx + \int_\mathbb{R} \frac{s \varepsilon^4}{2} \bigg( \frac{e^{-\hat{\phi}}}{\hat{v}} \bigg)_x |\phi_x|^2 \, dx \\
& \quad + \int_\mathbb{R} s \varepsilon^4 e^{-\hat{\phi}} \operatorname{Re}{ \bigg[ \overline{\phi_x} \bigg( \frac{\hat{v}_x\phi_x}{\hat{v}^2} + \bigg( \frac{\hat{\phi}_x}{\hat{v}^2} \bigg)_x \Phi_x + \frac{\hat{\phi}_x}{\hat{v}^2} \Phi_{xx} \bigg) \bigg]} \, dx.
\end{split}
\end{equation*}
It then follows by applying Young's inequality together with \eqref{spbound} that
\begin{equation} \label{3.10}
- \int_\mathbb{R} s \varepsilon^2 \operatorname{Re}{(\Phi_x \overline{\phi_x})} \, dx \leq C \delta_S \int_\mathbb{R} \left( |\Phi_x|^2 + |\Phi_{xx}|^2 + |\phi|^2 + |\phi_x|^2 \right) \, dx.
\end{equation}
Collecting \eqref{3.6}--\eqref{3.10}, we have
\begin{equation} \label{3.11}
\begin{split}
& \operatorname{Re}{\lambda} \int_\mathbb{R} \left( (T+1)|\Phi|^2 + \hat{v}^2|\Psi|^2 \right) \, dx - \int_\mathbb{R} \varepsilon^2 \operatorname{Re}{(\lambda \Phi \overline{\phi_x})} \, dx + \int_\mathbb{R} s\hat{v}\hat{v}_x \lvert \Psi \rvert^2 \, dx + \int_\mathbb{R} \nu \hat{v} \lvert \Psi_x \rvert^2 \, dx \\
& \quad \leq \eta \int_\mathbb{R} s \hat{v} \hat{v}_x |\Psi|^2 \, dx + \frac{C}{\eta} \delta_S \int_\mathbb{R} \left( |\Phi_x|^2 + |\Phi_{xx}|^2 + |\Psi_x|^2 + |\phi|^2 + |\phi_x|^2 \right) \, dx
\end{split}
\end{equation}
for arbitrarily small $\eta>0$. To treat the coupling term $ \textstyle -\int \varepsilon^2 \operatorname{Re}{(\lambda \Phi \overline{\phi_x})}$ on the left-hand side, we derive a quadratic form with the aid of the linearized Poisson equation. Multiplying the complex conjugate of \eqref{3.3c} by $\varepsilon^2 \lambda e^{-\hat{\phi}}\phi$, then taking the real part and integrating over $\mathbb{R}$, we obtain
\begin{equation} \label{3.12}
\begin{split}
& - \int_\mathbb{R} \varepsilon^2 \operatorname{Re}{(\lambda \overline{\Phi} \phi_x)} \, dx +  \operatorname{Re}{\lambda} \int_\mathbb{R} \varepsilon^2 \hat{v} |\phi|^2 \, dx + \operatorname{Re}{\lambda} \int_\mathbb{R} \frac{\varepsilon^4e^{-\hat{\phi}}}{\hat{v}} |\phi_{x}|^2 \, dx \\
& \quad = - \int_\mathbb{R} \varepsilon^4 \bigg( \frac{e^{-\hat{\phi}}}{\hat{v}}\bigg)_x \operatorname{Re}{( \lambda \phi \overline{\phi_{x}})} \, dx - \int_\mathbb{R} \varepsilon^4 e^{-\hat{\phi}} \operatorname{Re}{ \bigg[ \lambda \phi \bigg( \frac{\hat{v}_x \overline{\phi_x}}{\hat{v}^2} + \bigg( \frac{\hat{\phi}_x}{\hat{v}^2}\bigg)_x \overline{\Phi_x} + \frac{\hat{\phi}_x}{\hat{v}^2} \overline{\Phi_{xx}} \bigg) \bigg] } \, dx.
\end{split}
\end{equation}
As in \eqref{3.10}, the right-hand side can be bounded by
\begin{equation*}
C \delta_S \int_\mathbb{R} \left( |\Phi_x|^2 + |\Phi_{xx}|^2 + |\lambda \phi|^2 + |\phi_x|^2 \right) \, dx.
\end{equation*}
Thus, by adding the identity \eqref{3.12} to \eqref{3.11}, we obtain
\begin{equation} \label{3.13}
\begin{split}
& \operatorname{Re}{\lambda} \int_\mathbb{R} \bigg( (T+1) |\Phi|^2 + \hat{v}^2 |\Psi|^2 - 2 \varepsilon^2 \operatorname{Re}{(\Phi \overline{\phi_x})} + \varepsilon^2\hat{v} |\phi|^2 + \frac{\varepsilon^4e^{-\hat{\phi}}}{\hat{v}}|\phi_x|^2 \bigg) \, dx \\
& \qquad + \int_\mathbb{R} s\hat{v}\hat{v}_x \lvert \Psi \rvert^2 \, dx + \int_\mathbb{R} \nu \hat{v}\lvert \Psi_x \rvert^2 \, dx \\
& \quad \leq \eta \int_\mathbb{R} s \hat{v} \hat{v}_x |\Psi|^2 \, dx + \frac{C}{\eta} |v_+-v_-| \int_\mathbb{R} \left( |\Phi_x|^2 + |\Phi_{xx}|^2 + |\Psi_x|^2 + |\phi|^2 + |\phi_x|^2 + |\lambda \phi|^2 \right) \, dx.
\end{split}
\end{equation}
Consider the quadratic form on the left-hand side:
\begin{equation} \label{3.14}
(T+1)|\Phi|^2 - 2\varepsilon^2 \operatorname{Re}{(\Phi \overline{\phi_x})} + \frac{\varepsilon^4 e^{-\hat{\phi}}}{\hat{v}} |\phi_x|^2.
\end{equation}
Its discriminant $D$ is given by
\begin{equation*}
D = 4\varepsilon^4 - 4\varepsilon^4(T+1)\frac{e^{-\hat{\phi}}}{\hat{v}} = 4 \varepsilon^4 \left(1-\frac{e^{-\hat{\phi}}}{\hat{v}}-T\frac{e^{-\hat{\phi}}}{\hat{v}} \right).
\end{equation*}
Here we note that, by the third equation of \eqref{waveeq} and the bounds \eqref{spbound},
\begin{equation} \label{3.15}
\bigg| 1- \frac{e^{-\hat{\phi}}}{\hat{v}} \bigg| = \bigg| \varepsilon^2 \frac{e^{-\hat{\phi}}}{\hat{v}} \bigg(\frac{\hat{\phi}_x}{\hat{v}} \bigg)_x \bigg| \leq C \delta_S.
\end{equation}
Hence, for sufficiently small $\delta_S$, the discriminant $D$ is strictly negative, and thus the quadratic form \eqref{3.14} is uniformly positive definite; in particular, there exists a constant $c_0>0$ satisfying
\begin{equation*}
(T+1)|\Phi|^2 - 2\varepsilon^2 \operatorname{Re}{(\Phi \overline{\phi_x})} + \frac{\varepsilon^4 e^{-\hat{\phi}}}{\hat{v}} |\phi_x|^2 \geq c_0 \left( |\Phi|^2 + |\phi_x|^2 \right).
\end{equation*}
Consequently, inserting this bound into \eqref{3.13} and choosing $\eta>0$ sufficiently small, we deduce
\begin{equation} \label{L2_est}
\begin{split}
& \operatorname{Re}{\lambda} \int_\mathbb{R} \left( |\Phi|^2 + \hat{v}^2 |\Psi|^2 + |\phi|^2 + |\phi_x|^2 \right) \, dx + \int_\mathbb{R} s\hat{v}\hat{v}_x |\Psi|^2 \, dx + \int_\mathbb{R} |\Psi_x|^2 \, dx \\
& \quad \leq C \delta_S \left( \lVert \Phi_x \rVert_{H^1}^2 +\lVert \Psi_x \rVert_{L^2}^2 +\lVert \phi \rVert_{H^1}^2 + \lVert \lambda \phi \rVert_{L^2}^2 \right)
\end{split}
\end{equation}
for sufficiently small $\delta_S < \delta_0$.

Next we estimate $\lVert \Phi_x \rVert_{L^2}$. Differentiating \eqref{3.3a} once and multiplying the resulting equation by $\overline{\Phi_x}$ and $- \nu^{-1} \hat{v}\overline{\Psi}$, we have
\begin{equation} \label{eq1}
\lambda |\Phi_x|^2 - s \Phi_{xx}\overline{\Phi_x} - \Psi_{xx} \overline{\Phi_x} = 0
\end{equation}
and
\begin{equation} \label{eq2}
- \frac{\hat{v}}{\nu} \lambda \Phi_x \overline{\Psi} + \frac{s \hat{v}}{\nu} \Phi_{xx}\overline{\Psi} + \frac{\hat{v}}{\nu} \Psi_{xx}\overline{\Psi} = 0,
\end{equation}
respectively. Multiplying \eqref{3.3b} by $-\nu^{-1} \hat{v} \overline{\Phi_x}$, we obtain in addition
\begin{equation} \label{eq3}
\begin{split} 
& - \frac{\hat{v}}{\nu} \lambda \Psi \overline{\Phi_x} + \frac{s \hat{v}}{\nu} \Psi_x \overline{\Phi_x} + \frac{(T+1)}{\nu\hat{v}} |\Phi_x|^2 + \Psi_{xx} \overline{\Phi_x} \\
& \quad = \frac{ \hat{u}_x}{\hat{v}} |\Phi_x|^2 - \frac{\varepsilon^2 \hat{v}}{\nu} \bigg( \frac{\hat{\phi}_x}{\hat{v}^2}\phi_x \overline{\Phi_x} - \frac{\hat{\phi}_x^2}{\hat{v}^3} |\Phi_x|^2 \bigg) \\
& \qquad + \frac{\varepsilon^2}{\nu} \bigg[ \bigg( -\frac{2\hat{\phi}_{xx}}{\hat{v}^2} +\frac{3\hat{\phi}_x\hat{v}_x}{\hat{v}^3} \bigg) |\Phi_x|^2 -\frac{\hat{\phi}_x}{\hat{v}^2}\Phi_{xx}\overline{\Phi_x} -\frac{\hat{v}_x}{\hat{v}^2}\phi_x\overline{\Phi_x} +\frac{\phi_{xx}}{\hat{v}}\overline{\Phi_x} \bigg].
\end{split}
\end{equation}
Combining the equations \eqref{eq1}--\eqref{eq3}, taking the real part, and integrating over $\mathbb{R}$, we obtain
\begin{equation} \label{3.17}
\begin{split}
& \operatorname{Re}{\lambda} \int_\mathbb{R} \left( |\Phi_x|^2 - \frac{2\hat{v}}{\nu} \operatorname{Re}{(\Psi \overline{\Phi_x})} \right) \, dx + \int_\mathbb{R} \frac{(T+1)}{\nu\hat{v}} |\Phi_x|^2 \, dx - \int_\mathbb{R} \frac{\varepsilon^2}{\nu\hat{v}} \operatorname{Re}{(\overline{\phi_{xx}}\Phi_x)} \, dx \\
& \quad = \int_\mathbb{R} \frac{s \hat{v}_x}{\nu} \operatorname{Re}{(\Phi_{x}\overline{\Psi})} \, dx + \int_\mathbb{R} \frac{\hat{v}_x}{\nu} \operatorname{Re}{(\Psi_{x}\overline{\Psi})} \, dx + \int_\mathbb{R} \frac{\hat{v}}{\nu} |\Psi_x|^2 \, dx + \int_\mathbb{R} \frac{\hat{u}_x}{\hat{v}} |\Phi_x|^2 \, dx \\
& \qquad - \int_\mathbb{R} \frac{\varepsilon^2 \hat{\phi}_x}{\nu\hat{v}} \bigg( \operatorname{Re}{(\phi_x \overline{\Phi_x})} - \frac{\hat{\phi}_x}{\hat{v}} |\Phi_x|^2 \bigg) \, dx \\
& \qquad + \int_\mathbb{R} \frac{\varepsilon^2}{\nu} \operatorname{Re}{\bigg[ \bigg( -\frac{2\hat{\phi}_{xx}}{\hat{v}^2} + \frac{3\hat{\phi}_x\hat{v}_x}{\hat{v}^3} \bigg) |\Phi_x|^2 -\frac{\hat{\phi}_x}{\hat{v}^2}\Phi_{xx}\overline{\Phi_x} -\frac{\hat{v}_x}{\hat{v}^2}\phi_x\overline{\Phi_x} \bigg]} \, dx \\
& \quad =: II,
\end{split}
\end{equation}
where we have used
\begin{equation*}
\begin{split}
& - \int_\mathbb{R} \frac{s \hat{v}}{\nu} \operatorname{Re}{(\Phi_{xx}\overline{\Psi})} \, dx - \int_\mathbb{R} \frac{\hat{v}}{\nu} \operatorname{Re}{(\Psi_{xx}\overline{\Psi})} \, dx \\
& \quad = \int_\mathbb{R} \frac{s \hat{v}_x}{\nu} \operatorname{Re}{(\Phi_{x}\overline{\Psi})} \, dx + \int_\mathbb{R} \frac{s \hat{v}}{\nu} \operatorname{Re}{(\Phi_{x}\overline{\Psi_x})} \, dx + \int_\mathbb{R} \frac{\hat{v}_x}{\nu} \operatorname{Re}{(\Psi_{x}\overline{\Psi})} \, dx + \int_\mathbb{R} \frac{\hat{v}}{\nu} |\Psi_x|^2 \, dx.
\end{split}
\end{equation*}
By Young's inequality, the term $II$ is bounded as
\begin{equation} \label{IIbd}
\begin{split}
|II| & \leq \eta \int_\mathbb{R} s\hat{v}\hat{v}_x |\Psi|^2 \, dx + \frac{C}{\eta} \delta_S \int_\mathbb{R} |\Phi_x|^2 \, dx \\
& \quad + C \delta_S \int_\mathbb{R} \left( |\Phi_x|^2 + |\Phi_{xx}|^2 + |\phi_x|^2 \right) \, dx + C \int_\mathbb{R} |\Psi_x|^2 \, dx.
\end{split}
\end{equation}
Analogously to \eqref{3.12}, we obtain the following inequality by multiplying \eqref{3.3c} by $- \varepsilon^2 \nu^{-1} \hat{v}^{-1} e^{-\hat{\phi}} \overline{\phi_{xx}}$:
\begin{equation} \label{quad2}
\begin{split}
& - \int_\mathbb{R} \frac{\varepsilon^2}{\nu\hat{v}} \operatorname{Re}{(\overline{\phi_{xx}} \Phi_x)} \, dx + \int_\mathbb{R} \frac{\varepsilon^2}{\nu} |\phi_x|^2 \, dx + \int_\mathbb{R} \frac{\varepsilon^4 e^{-\hat{\phi}}}{\nu\hat{v}^2} |\phi_{xx}|^2 \, dx \\
& \quad = \int_\mathbb{R} \frac{\varepsilon^4 e^{-\hat{\phi}}}{\nu\hat{v}} \operatorname{Re}{ \bigg[ \overline{\phi_{xx}} \bigg( \frac{\hat{v}_x \phi_x}{\hat{v}^2} + \bigg( \frac{\hat{\phi}_x}{\hat{v}^2} \bigg)_x \Phi_x + \frac{\hat{\phi}_x}{\hat{v}^2} \Phi_{xx} \bigg) \bigg] } \, dx \\
& \quad \leq C \delta_S \int_\mathbb{R} \left( |\Phi_x|^2 + |\Phi_{xx}|^2 + |\phi_x|^2 + |\phi_{xx}|^2 \right) \, dx.
\end{split}
\end{equation}
Combining \eqref{3.17}--\eqref{quad2}, we have
\begin{equation*}
\begin{split}
& \operatorname{Re}{\lambda} \int_\mathbb{R} \left( |\Phi_x|^2- \frac{2\hat{v}}{\nu} \operatorname{Re}{(\Psi \overline{\Phi_x})} \right) \, dx + \int_\mathbb{R} \frac{\varepsilon^2}{\nu} |\phi_x|^2 \, dx \\
& \qquad + \int_\mathbb{R} \frac{(T+1)}{\nu\hat{v}} |\Phi_x|^2 \, dx - \int_\mathbb{R} \frac{2\varepsilon^2}{\nu\hat{v}} \operatorname{Re}{(\overline{\phi_{xx}}\Phi_x)} \, dx + \int_\mathbb{R} \frac{\varepsilon^4 e^{-\hat{\phi}}}{\nu\hat{v}^2} |\phi_{xx}|^2 \, dx \\
& \quad \leq \eta \int_\mathbb{R} s\hat{v}\hat{v}_x |\Psi|^2 \, dx + \frac{C}{\eta} \delta_S \int_\mathbb{R} |\Phi_x|^2 \, dx + C \int_\mathbb{R} |\Psi_x|^2 \, dx \\
& \qquad + C \delta_S \int_\mathbb{R} \left( |\Phi_x|^2 + |\Phi_{xx}|^2 + |\phi_x|^2 + |\phi_{xx}|^2 \right) \, dx.
\end{split}
\end{equation*}
Notice that, for sufficiently small $\delta_S$ and some constant $c_1>0$, \eqref{3.15} yields
\begin{equation*}
\frac{(T+1)}{\nu\hat{v}} |\Phi_x|^2 - \frac{2\varepsilon^2}{\nu\hat{v}} \operatorname{Re}{(\overline{\phi_{xx}}\Phi_x)} + \frac{\varepsilon^4 e^{-\hat{\phi}}}{\nu\hat{v}^2} |\phi_{xx}|^2 \geq c_1 \left( |\Phi_x|^2 + |\phi_{xx}|^2 \right).
\end{equation*}
Consequently, we obtain
\begin{equation*}
\begin{split}
& \operatorname{Re}{\lambda} \int_\mathbb{R} \left( |\Phi_x|^2- \frac{2\hat{v}}{\nu} \operatorname{Re}{(\Psi \overline{\Phi_x})} \right) \, dx + \int_\mathbb{R} |\phi_x|^2 \, dx + c_1 \int_\mathbb{R} \left(  |\Phi_x|^2 + |\phi_{xx}|^2 \right) \, dx \\
& \quad \leq \eta \int_\mathbb{R} s\hat{v}\hat{v}_x |\Psi|^2 \, dx + \frac{C}{\eta} \delta_S \int_\mathbb{R} |\Phi_x|^2 \, dx + C \int_\mathbb{R} |\Psi_x|^2 \, dx \\
& \qquad + C \delta_S \int_\mathbb{R} \left( |\Phi_x|^2 + |\Phi_{xx}|^2 + |\phi_x|^2 + |\phi_{xx}|^2 \right) \, dx.
\end{split}
\end{equation*}
We use \eqref{L2_est} to bound the term $\textstyle \int |\Psi_x|^2$ on the right-hand side. Then,
\begin{equation} \label{tempq}
\begin{split}
& \operatorname{Re}{\lambda} \int_\mathbb{R} \left( |\Phi_x|^2- \frac{2\hat{v}}{\nu} \operatorname{Re}{(\Psi \overline{\Phi_x})} \right) \, dx + \int_\mathbb{R} |\phi_x|^2 \, dx + c_1 \int_\mathbb{R} \left(  |\Phi_x|^2 + |\phi_{xx}|^2 \right) \, dx \\
& \quad \leq \eta \int_\mathbb{R} s\hat{v}\hat{v}_x |\Psi|^2 \, dx + \frac{C}{\eta} \delta_S \int_\mathbb{R} |\Phi_x|^2 \, dx \\
& \qquad + C \delta_S \left( \lVert \Phi_x \rVert_{H^1}^2 +\lVert \Psi_x \rVert_{L^2}^2 +\lVert \phi \rVert_{H^2}^2 + \lVert \lambda \phi \rVert_{L^2}^2 \right).
\end{split}
\end{equation}
Multiplying \eqref{L2_est} by the constant $\tfrac{2}{\nu^2}$, and then adding the result to \eqref{tempq}, we obtain
\begin{equation} \label{basic_est}
\begin{split}
& \operatorname{Re}{\lambda} \lVert (\Phi,\Phi_x,\Psi,\phi,\phi_x) \rVert_{L^2}^2 + \lVert \sqrt{s\hat{v}\hat{v}_x}\Psi \rVert_{L^2}^2 + \lVert (\Phi_x,\Psi_x) \rVert_{L^2}^2 + \lVert \phi_x \rVert_{H^1}^2 \\
& \quad \leq C \delta_S \left( \lVert \Phi_{xx} \rVert_{L^2}^2 + \lVert \phi \rVert_{L^2}^2 + \lVert \lambda \phi \rVert_{L^2}^2 \right)
\end{split}
\end{equation}
for sufficiently small $\delta_S<\delta_0$, where we have also used the fact that there exists a constant $c_2>0$ satisfying
\begin{equation*}
|\Phi_x|^2 - \frac{2\hat{v}}{\nu} \operatorname{Re}{(\Psi \overline{\Phi_x})} + \frac{2\hat{v}^2}{\nu^2} |\Psi|^2 \geq c_2 \left( |\Phi_x|^2 + |\Psi|^2 \right).
\end{equation*}

To complete the proof of the lemma, it remains to deal with the terms appearing on the right-hand side of \eqref{basic_est}. We begin with the terms $\lVert \phi \rVert_{H^1}$ and $\lVert \lambda \phi \rVert_{L^2}$. Multiplying \eqref{3.3c} by $\overline{\phi}$ and taking the real part, we have
\begin{equation*}
-\varepsilon^2 \operatorname{Re}{ \bigg( \overline{\phi} \bigg( \frac{\phi_x}{\hat{v}} - \frac{\hat{\phi}_x}{\hat{v}^2} \Phi_x \bigg)_x \bigg)} = - e^{\hat{\phi}} \operatorname{Re}{(\Phi_x \overline{\phi})} - \hat{v} e^{\hat{\phi}} |\phi|^2.
\end{equation*}
Integrating it over $\mathbb{R}$, we obtain after an application of integration by parts
\begin{equation*}
\begin{split}
\int_\mathbb{R} \bigg( \frac{\varepsilon^2}{\hat{v}} |\phi_x|^2 + \hat{v}e^{\hat{\phi}} |\phi|^2 \bigg) \, dx & = \int_\mathbb{R} \frac{\varepsilon^2 \hat{\phi}_x}{\hat{v}^2} \operatorname{Re}{(\Phi_x \overline{\phi_x})} \, dx - \int_\mathbb{R} e^{\hat{\phi}} \operatorname{Re}{(\Phi_x \overline{\phi})} \, dx.
\end{split}
\end{equation*}
We then apply Young's inequality together with the bounds \eqref{spbound} to obtain
\begin{equation*}
\lVert \phi \rVert_{H^1}^2 \leq C \delta_S \left( \lVert \Phi_x \rVert_{L^2}^2 + \lVert \phi_x \rVert_{L^2}^2 \right) + \eta \lVert \phi \rVert_{L^2}^2 + \frac{C}{\eta} \lVert \Phi_x \rVert_{L^2}^2.
\end{equation*}
This implies that there exists a constant $C>0$ such that
\begin{equation} \label{phi_est}
\lVert \phi \rVert_{H^1}^2 \leq C\lVert \Phi_x \rVert_{L^2}^2
\end{equation}
for sufficiently small $\delta_S < \delta_0$. Analogously, we derive an estimate for $\lVert \lambda \phi \rVert_{L^2}^2$. Multiplying \eqref{3.3c} by $|\lambda|^2 \overline{\phi}$, taking the real part, and integrating over $\mathbb{R}$, we have
\begin{equation*}
\int_\mathbb{R} \left( \frac{\varepsilon^2}{\hat{v}}|\lambda|^2 |\phi_x|^2 + \hat{v} e^{\hat{\phi}} |\lambda|^2 |\phi|^2 \right) \, dx = \int_\mathbb{R} \frac{\varepsilon^2\hat{\phi}_x}{\hat{v}^2} |\lambda|^2 \operatorname{Re}{(\Phi_x \overline{\phi_x})} \, dx - \int_\mathbb{R} e^{\hat{\phi}} |\lambda|^2 \operatorname{Re}{(\Phi_x \overline{\phi})} \, dx.
\end{equation*}
Here, using $|\lambda|^2 = \lambda \overline{\lambda}$ and \eqref{3.3a}, we estimate the right-hand side:
\begin{equation*}
\begin{split}
|R.H.S| &= \bigg| \int_\mathbb{R} \frac{s\varepsilon^2\hat{\phi}_x}{\hat{v}^2} \operatorname{Re}{(\Phi_{xx} \overline{\lambda} \overline{\phi_x})} \, dx + \int_\mathbb{R} \frac{\varepsilon^2\hat{\phi}_x}{\hat{v}^2} \operatorname{Re}{(\Psi_{xx} \overline{\lambda} \overline{\phi_x})} \, dx \\
& \qquad + \int_\mathbb{R} s e^{\hat{\phi}} \operatorname{Re}{( \Phi_{xx} \overline{\lambda} \overline{\phi})} \, dx + \int_\mathbb{R} e^{\hat{\phi}} \operatorname{Re}{( \Psi_{xx} \overline{\lambda} \overline{\phi})} \, dx \bigg| \\
& \leq C \delta_S \left( \lVert \Phi_{xx} \rVert_{L^2}^2 + \lVert \Psi_{xx} \rVert_{L^2}^2 + \lVert \lambda\phi_x \rVert_{L^2}^2 \right) \\
& \quad + \eta \lVert \lambda \phi \rVert_{L^2}^2 + \frac{C}{\eta} \left( \lVert \Phi_{xx} \rVert_{L^2}^2 + \lVert \Psi_{xx} \rVert_{L^2}^2 \right).
\end{split}
\end{equation*}
Therefore, we deduce that
\begin{equation} \label{lphi_est}
\lVert \lambda \phi \rVert_{H^1}^2 \leq C \left( \lVert \Phi_{xx} \rVert_{L^2}^2 + \lVert \Psi_{xx} \rVert_{L^2}^2 \right)
\end{equation}
for sufficiently small $\delta_S < \delta_0$.

Substituting the bounds \eqref{phi_est} and \eqref{lphi_est} into \eqref{basic_est}, we have
\begin{equation} \label{zeroth}
\begin{split}
& \operatorname{Re}{\lambda} \lVert (\Phi,\Phi_x,\Psi,\phi,\phi_x) \rVert_{L^2}^2 + \lVert \sqrt{s\hat{v}\hat{v}_x}\Psi \rVert_{L^2}^2 + \lVert (\Phi_x,\Psi_x) \rVert_{L^2}^2 + \lVert \phi_x \rVert_{H^1}^2 \\
& \quad \leq C \delta_S \left( \lVert \Phi_{xx} \rVert_{L^2}^2 + \lVert \Psi_{xx} \rVert_{L^2}^2 \right)
\end{split}
\end{equation}
for sufficiently small $\delta_S < \delta_0$. To close the estimate, we now combine \eqref{zeroth} with the higher-order estimates established in Appendix~\ref{App_C}. Indeed, a suitable linear combination of \eqref{uL2}--\eqref{vxL2} yields
\begin{equation} \label{high}
\begin{split}
\operatorname{Re}{\lambda} \lVert (\Phi_{xx},\Psi_x,\Psi_{xx}) \rVert_{L^2}^2 + \lVert \Phi_{xx} \rVert_{L^2}^2 + \lVert \Psi_{xx} \rVert_{H^1}^2 \leq C \left( \lVert \Phi_x \rVert_{L^2}^2 + \lVert \Psi_x \rVert_{L^2}^2 + \lVert \phi_x \rVert_{L^2}^2 \right).
\end{split}
\end{equation}
Combining \eqref{zeroth} and \eqref{high}, we obtain
\begin{equation*}
\begin{split}
& \operatorname{Re}{\lambda} \left( \lVert (\Phi,\Psi) \rVert_{H^2}^2 + \lVert \phi \rVert_{H^1}^2 \right) + \lVert \sqrt{s\hat{v}\hat{v}_x}\Psi \rVert_{L^2}^2 + \lVert \Phi_x \rVert_{H^1}^2 + \lVert \Psi_x \rVert_{H^2}^2 + \lVert \phi_x \rVert_{H^1}^2 \\
& \quad \leq C\delta_S(\|\Phi_{xx}\|_{L^2}^2+\|\Psi_{xx}\|_{L^2}^2).
\end{split}
\end{equation*}
For $\delta_S$ sufficiently small, the right-hand side can be absorbed into the left-hand side, which implies \eqref{3.4}. This completes the proof of the lemma.

\end{proof}

\section{Simplicity of the zero eigenvalue} \label{sec:Simplicity}

In this section, we prove simplicity of the zero eigenvalue of the linearized operator $L$ by computing the first-derivative of the associated Evans function at $\lambda=0$. Following \cite{GZ, MZ1, ZH}, we begin by rewriting the eigenvalue equation $(L-\lambda I)U=0$ as the first-order ODE system
\begin{equation}\label{5}
W' = \mathbb{A}(x,\lambda) W, \qquad x\in\mathbb{R},\ \lambda\in\mathbb{C},
\end{equation}
in the coordinates:
\begin{equation} \label{coord}
W = (v, u, \tilde{u}', \phi, \phi')^{\mathrm{tr}},
\end{equation}
where $\tilde{u} = b_1 v + b_2 u$, $b_j$ denotes the $(2,j)$-component of the matrix $B$ defined in \eqref{ABDE}, and $\phi = \mathcal{A}^{-1}\mathcal{B}v$ with $\mathcal{A}, \mathcal{B}$ in \eqref{calAB}. The coefficient matrix $\mathbb{A}$ is defined by the relations
\begin{subequations} \label{coordinate}
\begin{align}
\label{co1} v' & = \frac{\lambda}{s} v - \frac{1}{s}u', \\
\label{co2} u' & = \frac{1}{b_2} \left( \tilde{u}' -b_1' v - b_1v' - b_2'u \right), \\
\label{co3} \tilde{u}'' &= \Big( (b_1'-A_{21})v + (b_2'-s)u - D_{21}\phi' - E_{21}\phi'' \Big)' + \lambda u, \\
\label{co4} \phi'' & = \bigg( \frac{\bar{\phi}''}{\bar{v}} - \frac{2\bar{v}'\bar{\phi}'}{\bar{v}^2} + \frac{\bar{v}e^{\bar{\phi}}}{\varepsilon^2} \bigg) v + \frac{\bar{\phi}'}{\bar{v}} v' + \frac{\bar{v}^2e^{\bar{\phi}}}{\varepsilon^2} \phi + \frac{\bar{v}'}{\bar{v}}\phi',
\end{align}
\end{subequations}
and the limits $ \mathbb{A}_\pm (\lambda) := \lim_{x \rightarrow \pm \infty} \mathbb{A}(x,\lambda)$ are given by
\begin{equation} \label{A_pm}
\mathbb{A}_\pm (\lambda) = \begin{pmatrix}
\frac{\lambda}{s} & 0 & - \frac{v_\pm}{s\nu} & 0 & 0 \\
0 & 0 & \frac{v_\pm}{\nu} & 0 & 0 \\
- \frac{T \lambda}{sv_\pm^2} & \lambda & \frac{T}{s\nu v_\pm} - \frac{sv_\pm}{\nu} & 0 & \frac{1}{v_\pm} \\
0 & 0 & 0 & 0 & 1 \\
\frac{1}{\varepsilon^2} & 0 & 0 & \frac{v_\pm}{\varepsilon^2} & 0 
\end{pmatrix}.
\end{equation}
By Lemma~\ref{SP}, the matrix $\mathbb{A}(x,\lambda)$ satisfies the assumption \textup{(H0)} in Appendix~\ref{Appendix_A}. We briefly indicate how the relations in \eqref{coordinate} define the coefficient matrix $\mathbb{A}(x,\lambda)$. First, since $sb_2-b_1\neq 0$, the first two relations \eqref{co1}--\eqref{co2} determine $(v',u')$ uniquely in terms of $W$. Next, differentiating \eqref{co4} and using the resulting expression for $v'$ allows us to write $\phi'''$ in terms of $W$ and $\tilde{u}''$. Substituting these expressions for $v',u',\phi'',$ and $\phi'''$ into \eqref{co3}, and using $\textstyle \frac{sb_2}{sb_2-b_1} \neq 0$, we obtain that $\tilde{u}''$ is uniquely determined by $W$. Consequently, \eqref{coordinate} closes to a first-order system of the form \eqref{5}.

We introduce notation for the eigenvalues and eigenvectors of $A_\pm$ and define the associated scalar diffusion rates using $B_\pm$. Here $A_\pm$ and $B_\pm$ are the constant matrices appearing in the limiting operators $L_\pm$; see \eqref{Lpm}. Let $a_j^\pm$ denote the eigenvalues of the matrix
\begin{equation*}
A_\pm  = \begin{pmatrix}
s & 1 \\
\tfrac{T+1}{v_\pm^2} & s
\end{pmatrix}.
\end{equation*}
Then
\begin{equation}\label{aj}
a_j^\pm = s + (-1)^j \sqrt{\frac{T+1}{v_\pm^2}}, \quad j=1,2.
\end{equation}
We choose the corresponding left and right eigenvectors as
\begin{equation}\label{lrj}
l_j^\pm = \begin{pmatrix}
\frac{1}{\sqrt{2}} & \frac{(-1)^j}{\sqrt{2}} \sqrt{\frac{v_\pm^2}{T+1}}
\end{pmatrix}, \quad
r_j^\pm = \begin{pmatrix}
\frac{1}{\sqrt{2}} & \frac{(-1)^j}{\sqrt{2}} \sqrt{\frac{T+1}{v_\pm^2}}
\end{pmatrix}^{\mathrm{tr}}, \quad j=1,2,
\end{equation}
respectively, normalized so that $l_i^\pm r_j^\pm = \delta_{ij}$. We define the scalar diffusion rates
\begin{equation}\label{betaj}
\beta_j^\pm := l_j^\pm B_\pm r_j^\pm = \frac{\nu}{2v_\pm}.
\end{equation}
With this notation in hand, we now turn to the construction of the Evans function.

\subsection{Construction of the Evans function} \label{Construction}

Consider the limiting ODE systems
\begin{equation}\label{limODE}
Z' = \mathbb{A}_\pm(\lambda)Z,
\end{equation}
where $\mathbb{A}_\pm$ are given in \eqref{A_pm}. For $\lambda$ in a sufficiently small neighborhood of $0$, we construct analytic bases of solutions of \eqref{limODE}, that is, for $\lambda\in B(0,r)$ with $r>0$ sufficiently small, where $B(0,r):=\{\lambda\in\mathbb C:\ |\lambda|<r\}$.

\begin{lemma}\label{low1}
Assume that the hypotheses of Theorem~\ref{Main} hold. There exists $r_0>0$ such that for all $\lambda \in B(0,r_0)$, the ODE system \eqref{limODE} with coefficient matrix $\mathbb{A}_\pm(\lambda)$ given by \eqref{A_pm} admits a basis of solutions of the form
\begin{equation*}
Z_j^\pm(x,\lambda) = e^{\mu_j^\pm(\lambda)x} \overline{V}_j^\pm(\lambda),\quad j=1,\dots,5,
\end{equation*}
where $\mu_j^\pm(\lambda)$ and $\overline{V}_j^\pm(\lambda)$ are analytic in $\lambda$. These solutions decompose into three ``fast'' modes and two ``slow'' modes:

\smallskip\noindent
\emph{(i) Fast modes.} For $j=1,2,3$,
\begin{equation}\label{fast}
\mu_j^\pm(\lambda)=\gamma_j^\pm+\mathcal O(\lambda),\qquad
\overline{V}_j^\pm(\lambda)=S_j^\pm+\mathcal O(\lambda),
\end{equation}
where $\gamma_j^\pm$ are the nonzero distinct eigenvalues of $\mathbb{A}_\pm(0)$ satisfying
\begin{equation} \label{C1}
\operatorname{Re}\gamma_1^+, \operatorname{Re}\gamma_2^+ <0 < \operatorname{Re}\gamma_3^+, \quad \operatorname{Re}\gamma_2^- <0<\operatorname{Re}\gamma_1^-, \operatorname{Re} \gamma_3^-,
\end{equation}
and $S_j^\pm$ are the corresponding right eigenvectors.

\smallskip\noindent
\emph{(ii) Slow modes.} For $j=4,5$,
\begin{equation}\label{slow}
\mu_j^\pm (\lambda) = \lambda/a_{j-3}^\pm - \lambda^2 \beta_{j-3}^\pm/{a_{j-3}^\pm}^3 + \mathcal{O}(\lambda^3), \quad \overline{V}_j^\pm (\lambda) =  \tilde{r}_{j-3}^\pm +\mathcal{O}(\lambda)
\end{equation}
with
\begin{equation} \label{revs}
\tilde{r}_{j-3}^\pm = \begin{pmatrix}
{r_{j-3}^\pm}^{\mathrm{tr}} & 0 & \displaystyle -\frac{1}{\sqrt{2}v_\pm} & 0
\end{pmatrix}^{\mathrm{tr}},
\end{equation}
where $a_{j-3}^\pm$, $\beta_{j-3}^\pm$ and $r_{j-3}^\pm$ are as defined in \eqref{aj}, \eqref{betaj}, and \eqref{lrj}, respectively.
\end{lemma}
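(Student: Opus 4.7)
The plan is to split the spectrum of $\mathbb{A}_\pm(0)$ into three simple nonzero eigenvalues (producing the fast modes) and a degenerate zero eigenvalue of algebraic multiplicity two (producing the slow modes), and to treat each part by analytic perturbation theory in $\lambda$ on a sufficiently small disk $B(0,r_0)$.

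First I would analyze $\mathbb{A}_\pm(0)$ directly. At $\lambda=0$ the second column of the matrix in \eqref{A_pm} vanishes identically, and columns $1$ and $4$ are proportional, so $\mathbb{A}_\pm(0)$ has rank at most three and the characteristic polynomial factors as $p_\pm(\mu)=\mu^2 q_\pm(\mu)$ for a cubic $q_\pm$. An equivalent characterization of the three roots $\gamma_j^\pm$ of $q_\pm$ is that they are the nonzero $\mu$ satisfying $\det(\mu A_\pm+\mu^2 B_\pm+\mu^3 c_\pm(\mu) E_\pm)=0$, where $c_\pm(\mu):=-1/(v_\pm-\varepsilon^2\mu^2)$ comes from the reduction $\mathcal{A}_\pm^{-1}\mathcal{B}_\pm v=c_\pm(\mu)v$ applied to $U=e^{\mu x}r$. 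I would then verify distinctness of $\gamma_1^\pm,\gamma_2^\pm,\gamma_3^\pm$ and the sign conditions \eqref{C1} by direct algebraic inspection, using $s=s_+>0$, the Lax condition $v_+>v_-$, and the signs of $s^2-(T+1)/v_\pm^2$ at the two endpoints. Once \eqref{C1} is established, analytic perturbation theory for a simple eigenvalue of an analytic matrix-valued function (cf.\ Kato) produces unique analytic $\mu_j^\pm(\lambda)$ and $\overline{V}_j^\pm(\lambda)$ on $B(0,r_0)$ for $j=1,2,3$, with $\mu_j^\pm(0)=\gamma_j^\pm$ and $\overline{V}_j^\pm(0)=S_j^\pm$, yielding \eqref{fast}.

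For the slow modes I would reduce to the underlying $2\times 2$ eigenvalue problem. Substituting $U=e^{\mu x}r$ with $r\in\mathbb{C}^2$ into $L_\pm U=\lambda U$ reduces \eqref{Lpm} to
\begin{equation*}
\bigl(\mu A_\pm+\mu^2 B_\pm+\mu^3 c_\pm(\mu) E_\pm-\lambda I\bigr)r=0.
\end{equation*}
Since $A_\pm$ has simple eigenvalues $a_1^\pm\ne a_2^\pm$ with left/right eigenvectors $l_j^\pm, r_j^\pm$, a power-series ansatz $\mu=\lambda/a_j^\pm+\alpha_j\lambda^2+\mathcal{O}(\lambda^3)$, $r=r_j^\pm+\lambda\rho_j+\mathcal{O}(\lambda^2)$ can be matched order by order: the $\mathcal{O}(\lambda)$ balance recovers $A_\pm r_j^\pm=a_j^\pm r_j^\pm$, and projecting the $\mathcal{O}(\lambda^2)$ balance onto $l_j^\pm$ (using $l_j^\pm r_j^\pm=1$) yields the viscosity correction $\alpha_j=-\beta_j^\pm/(a_j^\pm)^3$. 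The nonlocal factor $c_\pm(\mu)$ is analytic at $\mu=0$ and contributes only at order $\mu^3$, hence at order $\lambda^3$, so it does not affect the two leading coefficients. Analyticity of $\mu_j^\pm(\lambda)$ and of a suitably normalized eigenvector in $\lambda$ on a common disk then follows from the implicit function theorem. Lifting the reduced eigenvector $r$ to the $W$-coordinates via \eqref{co1}--\eqref{co4} at the constant-coefficient limit produces $\overline{V}=(r_1,r_2,\nu\mu r_2/v_\pm,c_\pm(\mu)r_1,\mu c_\pm(\mu)r_1)^T$; evaluating at $\mu=0$ and $r=r_{j-3}^\pm$ recovers the leading vector $\tilde{r}_{j-3}^\pm$ of \eqref{revs}, completing \eqref{slow}.

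The main subtlety lies in the slow-mode analysis: the zero eigenvalue of $\mathbb{A}_\pm(0)$ is degenerate, so the simple-eigenvalue perturbation theorem does not apply directly to the $5\times 5$ system. The reduction to the $2\times 2$ problem sidesteps this by replacing a degenerate matrix perturbation by two non-degenerate scalar problems parametrized by the distinct hyperbolic speeds $a_1^\pm,a_2^\pm$. A secondary technical point is the careful accounting of the nonlocal factor $c_\pm(\mu) E_\pm$: although it does not enter the first two coefficients of $\mu_j^\pm$, it does contribute to the lifted eigenvector at each order in $\lambda$, so the expansion must be carried out with that contribution tracked in order to confirm analyticity of $\overline{V}_j^\pm(\lambda)$ on $B(0,r_0)$.
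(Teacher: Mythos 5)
The slow-mode part of your argument matches the paper's essentially verbatim: both reduce to the $2\times2$ dispersion relation $\det(\mu A_\pm+\mu^2 B_\pm+\mu^3 c_\pm(\mu)E_\pm-\lambda I)=0$ with $c_\pm(\mu)=-1/(v_\pm-\varepsilon^2\mu^2)$, expand in a power series, and project onto the left eigenvector $l_j^\pm$ to extract the diffusion coefficient $\beta_j^\pm$. The paper obtains this by writing $\lambda-s\mu-\tfrac{\nu}{2v_\pm}\mu^2=\pm\mu q_\pm(\mu)$ and inverting by the implicit function theorem, which is equivalent to your order-by-order matching; and your lift to the $W$-coordinates correctly recovers $\tilde r_{j-3}^\pm$ at $\mu=0$. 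Your observation via Vieta's formulas that the product of the three cubic roots has sign $\mathrm{sgn}(-p_\pm)$ and the pairwise sum equals $-v_\pm/\varepsilon^2<0$ does give the sign split \eqref{C1} cleanly (for the $-$ case, the product being negative and the pairwise sum negative forces one negative and two positive real parts regardless of the quadratic coefficient; the $+$ case is symmetric), so that part of the ``direct algebraic inspection'' is fine.

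The genuine gap is distinctness of the three nonzero roots $\gamma_1^\pm,\gamma_2^\pm,\gamma_3^\pm$. Your sign analysis separates the lone root from the other two, but it does not rule out a double root among the two same-sign eigenvalues (e.g.\ $\gamma_1^+=\gamma_2^+$), and ``direct algebraic inspection'' of the cubic discriminant in $T,\nu,\varepsilon,v_\pm,s$ is not tractable. Distinctness is essential for the lemma as stated: it is what allows the implicit function theorem to produce \emph{analytic} $\mu_j^\pm(\lambda)$ and analytic eigenvectors for the fast modes. The paper closes this gap with the small-amplitude assumption, which you never invoke even though it is part of the hypothesis: it writes the cubic as $\tilde p_\pm(\gamma)+p_\pm$, observes that $\tilde p_\pm(\gamma)=\gamma^3+\frac{s^2v_\pm^2-T}{s\nu v_\pm}\gamma^2-\frac{v_\pm}{\varepsilon^2}\gamma$ has three \emph{simple} roots $0,\tilde\gamma_2^\pm,\tilde\gamma_3^\pm$ (a clean $\pm$-discriminant plus $\tilde\gamma_2^\pm\tilde\gamma_3^\pm=-v_\pm/\varepsilon^2\neq 0$), notes $p_\pm=\mathcal O(\delta_S)$, and invokes continuity of roots (Rouch\'e) as $\delta_S\to 0$ to conclude the perturbed roots remain simple. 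This same perturbation step is also used to fix the sign of the root $\gamma_1^\pm$ near zero via the local expansion of $\tilde p_\pm$. You should add this continuity-of-roots argument (or an equivalent one) for distinctness before applying analytic perturbation theory for the fast modes.
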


\begin{proof}
The characteristic polynomials of $\mathbb{A}_\pm(0)$ are given by
\begin{equation*}
\det (\mathbb{A}_\pm(0)-\mu I) = \mu^2 \left(\mu^3 +\frac{s^2 v_\pm^2 -T}{s\nu v_\pm}\mu^2 - \frac{v_\pm}{\varepsilon^2}\mu + \frac{T+1-s^2 v_\pm^2 }{s\nu \varepsilon^2} \right) =0.
\end{equation*}
Hence, each matrix $\mathbb{A}_\pm(0)$ has an eigenvalue $\mu=0$ of multiplicity $2$ and three eigenvalues $\gamma_j^\pm$, which are the roots of the cubic equation
\begin{equation*}
\gamma^3 +\frac{s^2 v_\pm^2 -T}{s\nu v_\pm}\gamma^2 - \frac{v_\pm}{\varepsilon^2}\gamma + \frac{T+1-s^2 v_\pm^2 }{s\nu \varepsilon^2}=0.
\end{equation*}
Let $S_j^\pm$ denote an eigenvector of $\mathbb{A}_\pm(0)$ corresponding to $\gamma_j^\pm$. Since the nonzero roots $\gamma_j^\pm$ are simple as shown below, the implicit function theorem applied to $\det(\mathbb{A}_\pm(\lambda)-\mu I)=0$ yields analytic eigenvalues $\mu_j^\pm(\lambda)$ and corresponding analytic eigenvectors $\overline{V}_j^\pm(\lambda)$ for $j=1,2,3$. The Taylor expansion about $\lambda=0$ then gives \eqref{fast}.

We next verify \eqref{C1} and the simplicity of $\gamma_j^\pm$. To this end, we set
\begin{equation*}
\tilde{p}_\pm (\gamma) := \gamma^3 +\frac{s^2 v_\pm^2 -T}{s\nu v_\pm}\gamma^2 - \frac{v_\pm}{\varepsilon^2}\gamma, \quad p_\pm :=\frac{T+1-s^2 v_\pm^2 }{s\nu \varepsilon^2} .
\end{equation*}
The zeros $\tilde{\gamma}^\pm_\ell$ of $\tilde{p}_\pm(\gamma)$ are given by
\begin{equation*}
\tilde{\gamma}_1^\pm =0, \quad \tilde{\gamma}_\ell^\pm = \frac{1}{2} \left(\frac{T-s^2 v_\pm^2}{s\nu v_\pm} + (-1)^{\ell-1} \sqrt{\left(\frac{s^2 v_\pm^2 -T}{s\nu v_\pm} \right)^2 +\frac{4 v_\pm}{\varepsilon^2} } \right), \quad \ell=2,3.
\end{equation*}
Since $\tilde{\gamma}_2^\pm \neq \tilde{\gamma}_3^\pm$ and $\tilde{\gamma}_2^\pm \tilde{\gamma}_3^\pm = -v_\pm/\varepsilon^2 \neq 0$, the three zeros $\tilde{\gamma}_\ell^\pm$ ($\ell=1,2,3$) are simple.
Moreover, by \eqref{RH1}, we have
\begin{equation*}
T-s^2v_\pm^2 =T -\frac{(T+1)v_\pm}{v_\mp} <0
\end{equation*}
for $v_+>v_->0$ with $\lvert v_+-v_- \rvert$ sufficiently small, which implies
\begin{equation} \label{rt}
\tilde{\gamma}_2^\pm <0, \quad \tilde{\gamma}_3^\pm >0.
\end{equation}
On the other hand,
\begin{equation*}
\begin{array}{l l}
p_+ =  \dfrac{T+1- s^2v_+^2}{s \nu \varepsilon^2} = \dfrac{(T+1)(1-v_+/v_-)}{s \nu \varepsilon^2} < 0, \\
p_- =  \dfrac{T+1 - s^2v_-^2}{s \nu \varepsilon^2} = \dfrac{(T+1)(1-v_-/v_+)}{s \nu \varepsilon^2} > 0. 
\end{array} 
\end{equation*}
Now we fix $v_-$ and consider $v_+$ as a parameter. Since $p_-= \mathcal{O}(\lvert v_+-v_- \rvert)$ and the roots $\tilde{\gamma}_\ell^-$ of $\tilde{p}_-(\gamma)$ are simple, the equation $\tilde{p}_-(\gamma)+p_-=0$ has three simple roots $\gamma_\ell^-$ for all $v_+$ sufficiently close to $v_-$ by continuity of the roots with respect to $p_-$. Thus, two roots, say $\gamma_2^-$ and $\gamma_3^-$, satisfy \eqref{C1} by continuity and \eqref{rt}. The remaining root, say $\gamma_1^-$, lies near $\tilde{\gamma}_1^-=0$ and satisfies $\operatorname{Re}{\gamma_1^-}>0$ since
\begin{equation*}
0< p_- = - \tilde{p}_-(\gamma) = \frac{v_-}{\varepsilon^2}\gamma \left(1 + \mathcal{O}(\gamma)\right) \quad \text{ as } \gamma \rightarrow 0.
\end{equation*}
Similarly, we fix $v_+$ and consider $v_-$ as a parameter. Then the roots $\gamma_1^+,\gamma_2^+,\gamma_3^+$ of $\tilde{p}_+(\gamma)+p_+=0$ satisfy \eqref{C1}. Indeed, $\gamma_2^+$ and $\gamma_3^+$ are close to $\tilde{\gamma}_2^+<0$ and $\tilde{\gamma}_3^+>0$, respectively, and the remaining root $\gamma_1^+$ lies near $0$ and satisfies $\operatorname{Re}{\gamma_1^+}<0$ since
\begin{equation*}
0> p_+ = - \tilde{p}_+(\gamma) = \frac{v_+}{\varepsilon^2}\gamma \left(1 + \mathcal{O}(\gamma)\right) \quad \text{ as } \gamma \rightarrow 0.
\end{equation*}
Therefore, the roots $\gamma_j^\pm$ are simple and distinct, and the fast-mode expansions \eqref{fast} follow.

To obtain the expansion \eqref{slow} for $\mu_4^\pm$ and $\mu_5^\pm$, we consider the characteristic equation $\det (\mathbb{A}_\pm(\lambda)-\mu I)=0$. For $|\mu|$ sufficiently small so that $\textstyle \mu^2 \neq \frac{v_\pm}{\varepsilon^2}$, a direct computation yields
\begin{equation} \label{mu45eq}
\left(\lambda - s\mu - \frac{\nu}{2v_\pm}\mu^2 \right)^2 = \frac{T+1}{v_\pm^2}\mu^2 + \frac{\nu^2}{4v_\pm^2}\mu^4 + \frac{\varepsilon^2 \mu^4}{v_\pm^3-\varepsilon^2v_\pm^2\mu^2},
\end{equation}
and the right-hand side equals $\mu^2 g_\pm(\mu)$ with $g_\pm$ analytic and $\textstyle g_\pm(0)=\frac{T+1}{v_\pm^2}\neq 0$. Thus there exists an analytic function $q_\pm(\mu)$ with $q_\pm(\mu)^2=g_\pm(\mu)$ for $|\mu|$ small, and \eqref{mu45eq} has two analytic solutions $\lambda=\lambda_j^\pm(\mu)$, $j=1,2$, corresponding to the choices $\textstyle \lambda - s\mu - \frac{\nu}{2v_\pm}\mu^2 = \pm \mu q_\pm(\mu)$. With this observation, expanding the right-hand side of \eqref{mu45eq} about $\mu=0$ and using the ansatz $\lambda= a\mu + \beta \mu^2 + \mathcal{O}(\mu^3)$, we obtain
\begin{equation*}
\lambda_j^\pm (\mu) = \left(s +(-1)^j \sqrt{\frac{T+1}{v_\pm^2}} \right) \mu + \frac{\nu}{2v_\pm}\mu^2 + \mathcal{O}(\mu^3) = a_j^\pm \mu + \beta_j^\pm \mu^2 + \mathcal{O}(\mu^3).
\end{equation*}
Since $a_j^\pm \neq 0$, we may apply the implicit function theorem to invert this relation and obtain analytic functions $\mu_{j+3}^\pm(\lambda)$, $j=1,2$. Reindexing $j+3$ as $j$, we have
\begin{equation*}
\mu_j^\pm (\lambda) = \lambda/a_{j-3}^\pm - \lambda^2 \beta_{j-3}^\pm/{a_{j-3}^\pm}^3 + \mathcal{O}(\lambda^3), \quad j=4,5,
\end{equation*}
which gives the first line of \eqref{slow}. Moreover, since $a_1^\pm \neq a_2^\pm$, the two slow eigenvalues $\mu_4^\pm(\lambda)$ and $\mu_5^\pm(\lambda)$ are distinct for $|\lambda|$ small. Hence, their eigenvectors $\overline{V}_4^\pm(\lambda)$ and $\overline{V}_5^\pm(\lambda)$ may be chosen analytic in $\lambda$ and admit limits as $\lambda\to 0$.

Finally, \eqref{revs} follows by a direct computation at $\lambda=0$. Indeed, using \eqref{A_pm} we find that $\mathbb{A}_\pm(0)\tilde r_{j-3}^\pm=0$ for $j=4,5$. With the normalization $\overline{V}_{j}^\pm(0)=\tilde r_{j-3}^\pm$ and the analyticity of $\textstyle \overline{V}_{j}^\pm(\lambda)$, we obtain $\textstyle \overline{V}_{j}^\pm(\lambda)=\tilde r_{j-3}^\pm+\mathcal O(\lambda)$ for $j=4,5$.

\end{proof}

For the standard definition of the Evans function, we require analytic bases of solutions of the system \eqref{5} that decay as $x\to +\infty$ and as $x\to -\infty$, respectively, within the \emph{domain of consistent splitting} (as defined in Appendix~\ref{Appendix_A}). The next two lemmas provide these ingredients near $\lambda=0$.

Combining Lemma~\ref{Gap} with Lemma~\ref{low1}, we immediately obtain:

\begin{lemma} \label{lemma0.2}
Assume that the hypotheses of Theorem~\ref{Main} hold. Let $r_0>0$ be as in Lemma~\ref{low1}.  There exists a constant $r_1 \in (0,r_0)$ such that for $\lambda \in B(0,r_1)$, the ODE system \eqref{5} admits solutions $W_j^\pm (x;\lambda)$, $C^1$ in $x$ and analytic in $\lambda$, satisfying
\begin{equation} \label{0.8}
\begin{split}
W_j^+ (x;\lambda) & = e^{\mu_j^+ (\lambda) x} V_j^+ (x;\lambda), \quad x \geq 0, \\
W_j^- (x;\lambda) & = e^{\mu_j^- (\lambda) x} V_j^- (x;\lambda), \quad x \leq 0, \\
(\partial / \partial \lambda)^k V_j^\pm (x ;\lambda) & = (\partial / \partial \lambda)^k \overline{V}_j^\pm (\lambda) +\mathcal{O} \left( \lvert \overline{V}_j^\pm (\lambda) \rvert e^{-\tilde{\theta}\lvert x \rvert} \right), \quad x \gtrless 0,
\end{split}
\end{equation}
for any $k \geq 0$ and some $\tilde{\theta}>0$, where $\mu_j^\pm(\lambda)$ and $\overline{V}_j^\pm(\lambda)$ are as in Lemma~\ref{low1}, and the $\mathcal{O}(\cdot)$ term depends only on $k$ and $\tilde{\theta}$.
\end{lemma}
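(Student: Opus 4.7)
The plan is to obtain Lemma~\ref{lemma0.2} as a direct application of the conjugation lemma (Lemma~\ref{Gap}) to the first-order system \eqref{5}, with the constant-coefficient solutions from Lemma~\ref{low1} as input. First, I would verify the hypotheses required by Lemma~\ref{Gap}: by Lemma~\ref{SP}, the profile $(\bar v,\bar u,\bar\phi)$ approaches its limits $(v_\pm,u_\pm,\phi_\pm)$ at an exponential rate uniform in $x$, so the coefficient matrix $\mathbb{A}(x,\lambda)$ defined through \eqref{coordinate} converges to $\mathbb{A}_\pm(\lambda)$ given in \eqref{A_pm} with $|\mathbb{A}(x,\lambda)-\mathbb{A}_\pm(\lambda)|\le C e^{-\theta\delta_S|x|}$ for $x\gtrless 0$, analytically in $\lambda$ on $B(0,r_0)$. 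This is precisely condition \textup{(H0)} already noted after \eqref{A_pm}.

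Next, I would apply the conjugation lemma separately on the half-lines $x\ge 0$ and $x\le 0$. On $x\ge 0$, it produces an invertible matrix $P^+(x,\lambda)$ that is $C^1$ in $x$, analytic in $\lambda$ on some $B(0,r_1^+)\subset B(0,r_0)$, and satisfies $P^+(x,\lambda)\to I$ as $x\to+\infty$ with exponential rate $\tilde\theta>0$ in the sense
\begin{equation*}
\bigl|\partial_\lambda^{k}\bigl(P^+(x,\lambda)-I\bigr)\bigr|\le C_k e^{-\tilde\theta x},\qquad x\ge 0,\ k\ge 0,
\end{equation*}
such that $W\mapsto P^+W$ conjugates the limiting system $Z'=\mathbb{A}_+(\lambda)Z$ to the full system \eqref{5}. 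An analogous statement holds on $x\le 0$ with $P^-(x,\lambda)$. Setting $r_1:=\min\{r_1^+,r_1^-\}$ and taking the basis $Z_j^\pm(x,\lambda)=e^{\mu_j^\pm(\lambda)x}\overline V_j^\pm(\lambda)$ from Lemma~\ref{low1}, I would define
\begin{equation*}
W_j^\pm(x;\lambda):=P^\pm(x,\lambda)Z_j^\pm(x,\lambda)=e^{\mu_j^\pm(\lambda)x}\,V_j^\pm(x;\lambda),\qquad V_j^\pm(x;\lambda):=P^\pm(x,\lambda)\overline V_j^\pm(\lambda).
\end{equation*}
These are $C^1$ in $x$ and analytic in $\lambda$ on $B(0,r_1)$, since $P^\pm$ and $\overline V_j^\pm$ are.

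The asymptotic identity in \eqref{0.8} for $k=0$ follows directly from $V_j^\pm(x;\lambda)-\overline V_j^\pm(\lambda)=(P^\pm(x,\lambda)-I)\overline V_j^\pm(\lambda)$ and the exponential decay of $P^\pm-I$. For general $k\ge 0$, I would combine the Leibniz rule for $\partial_\lambda^k$ applied to $(P^\pm-I)\overline V_j^\pm$ with Cauchy's integral formula on a slightly smaller disk (shrinking $r_1$ if needed) to control $\partial_\lambda^{k-j}(P^\pm-I)$ pointwise in $x$ by the same exponential bound $C_k e^{-\tilde\theta|x|}$; the factor $|\overline V_j^\pm(\lambda)|$ on the right side of \eqref{0.8} absorbs the remaining $\lambda$-derivatives of $\overline V_j^\pm$ via Cauchy estimates.

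I do not anticipate a serious obstacle, since the conjugation lemma from Appendix~\ref{Appendix_A} is tailored to this situation and the analyticity of both $\overline V_j^\pm(\lambda)$ and $P^\pm(x,\lambda)$ has been prepared by Lemma~\ref{low1} and by \textup{(H0)}. The only point requiring care is the choice of $r_1$: one has to shrink it below $r_0$ so that the conjugation construction is valid uniformly on $B(0,r_1)$ for both signs, and so that the Cauchy-formula argument used to pass the exponential bound through $k$ derivatives in $\lambda$ remains inside a closed disk contained in the domain of analyticity.
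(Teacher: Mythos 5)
Your approach is exactly the paper's: Lemma~\ref{lemma0.2} is stated as an immediate consequence of combining the conjugation lemma (Lemma~\ref{Gap}) with the constant-coefficient bases from Lemma~\ref{low1}, i.e., setting $W_j^\pm = P^\pm Z_j^\pm = e^{\mu_j^\pm x}\,P^\pm\overline V_j^\pm$ and reading off \eqref{0.8} from the decay of $\Theta_\pm = P^\pm - I$. One small simplification: the Cauchy-integral step you invoke to bound $\partial_\lambda^m\Theta_\pm$ is unnecessary, since \eqref{eq:evans:ThetaBounds} in Lemma~\ref{Gap} already supplies the exponential bounds on all $\lambda$-derivatives of $\Theta_\pm$; the only place you genuinely need boundedness from analyticity is to compare $|\partial_\lambda^{k-m}\overline V_j^\pm|$ with $|\overline V_j^\pm|$ on a closed subdisk, which is fine.
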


We observe that the eigenvalues $\mu_j^\pm$ in Lemmas~\ref{low1}--\ref{lemma0.2} satisfy
\begin{equation} \label{mu_js}
\begin{split}
& \operatorname{Re} \mu_1^+(\lambda), \operatorname{Re}\mu_2^+(\lambda) < 0 < \operatorname{Re}\mu_3^+(\lambda), \operatorname{Re}\mu_4^+(\lambda), \operatorname{Re}\mu_5^+(\lambda), \\
& \operatorname{Re}\mu_2^-(\lambda), \operatorname{Re}\mu_4^-(\lambda) < 0 < \operatorname{Re}\mu_1^-(\lambda), \operatorname{Re}\mu_3^-(\lambda), \operatorname{Re}\mu_5^-(\lambda)
\end{split}
\end{equation}
for all $\lambda \in B(0,r) \cap \{\lambda \in \mathbb{C} : \operatorname{Re} \lambda >0 \}$ with $r >0$ sufficiently small. This is a consequence of Lemma~\ref{low1} together with $a_2^\pm >0$ and $a_1^- < 0 < a_1^+$. We therefore have the following lemma.

\begin{lemma}\label{DCS}
Assume that the hypotheses of Theorem~\ref{Main} hold. Let $r_1>0$ be as in Lemma~\ref{lemma0.2}. There exists a constant $r_2 \in (0,r_1)$ such that the set $B_+(0,r_2) := B(0,r_2) \cap \{\lambda \in \mathbb{C}: \operatorname{Re} \lambda > 0 \}$ is a simply connected subset of the domain of consistent splitting for \eqref{5}. In particular, for every $\lambda\in B_+(0,r_2)$, the stable and unstable subspaces $S_\pm(\lambda)$ and $U_\pm(\lambda)$ of $\mathbb{A}_\pm(\lambda)$ satisfy $\dim S_\pm(\lambda)=2$ and $\dim U_\pm(\lambda)=3$, respectively.
\end{lemma}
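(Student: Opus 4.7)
The plan is to derive Lemma~\ref{DCS} directly from Lemma~\ref{low1} together with the sign pattern \eqref{mu_js}, which already enumerates $\operatorname{sgn}(\operatorname{Re}\mu_j^\pm(\lambda))$ for $j=1,\ldots,5$ on $B(0,r)\cap\{\operatorname{Re}\lambda>0\}$ for sufficiently small $r$. The argument is essentially a bookkeeping exercise: translate the sign information on the individual eigenvalues of $\mathbb{A}_\pm(\lambda)$ into the spectral splitting required for consistent splitting of the first-order system \eqref{5}.

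First, I would fix $r_2\in(0,r_1)$ small enough that \eqref{mu_js} holds on $B_+(0,r_2)$ and, in addition, the five eigenvalues $\mu_1^\pm(\lambda),\ldots,\mu_5^\pm(\lambda)$ remain pairwise distinct on $B(0,r_2)$. Distinctness for small $|\lambda|$ is clear from Lemma~\ref{low1}: the three fast eigenvalues converge to the distinct nonzero roots $\gamma_j^\pm$; the two slow eigenvalues have distinct leading-order rates $1/a_1^\pm\neq 1/a_2^\pm$; and the fast and slow families are separated because the former stay bounded away from zero while the latter vanish as $\lambda\to 0$. In particular, $\mathbb{A}_\pm(\lambda)$ is diagonalizable on $B(0,r_2)$.

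Second, by \eqref{mu_js}, for every $\lambda\in B_+(0,r_2)$ the matrix $\mathbb{A}_+(\lambda)$ has exactly two eigenvalues (namely $\mu_1^+(\lambda)$ and $\mu_2^+(\lambda)$) with $\operatorname{Re}<0$ and three ($\mu_3^+,\mu_4^+,\mu_5^+$) with $\operatorname{Re}>0$; analogously, $\mathbb{A}_-(\lambda)$ has two stable eigenvalues $\mu_2^-,\mu_4^-$ and three unstable eigenvalues $\mu_1^-,\mu_3^-,\mu_5^-$. Since no eigenvalue of $\mathbb{A}_\pm(\lambda)$ lies on the imaginary axis, the associated Riesz projections onto $S_\pm(\lambda)$ and $U_\pm(\lambda)$ are well-defined and depend analytically on $\lambda$ throughout $B_+(0,r_2)$. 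Their ranks are therefore constant, yielding $\dim S_\pm(\lambda)=2$ and $\dim U_\pm(\lambda)=3$ uniformly on $B_+(0,r_2)$, which is the consistent-splitting condition for \eqref{5}.

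Finally, $B_+(0,r_2)=B(0,r_2)\cap\{\operatorname{Re}\lambda>0\}$ is the intersection of two open convex sets and is therefore convex, hence simply connected. I do not anticipate any significant obstacle. The only mild delicacy is that, for slow modes, the sign of $\operatorname{Re}\mu_{4,5}^\pm(\lambda)$ must survive as $\lambda$ approaches the imaginary axis; this is controlled by the next-to-leading term $-\beta_{j-3}^\pm\lambda^2/(a_{j-3}^\pm)^3$ in the expansion \eqref{slow}, whose real part on the imaginary axis is positive since $\beta_{j-3}^\pm>0$ and $(a_{j-3}^\pm)^2>0$, and this is implicit in the derivation of \eqref{mu_js} cited above.
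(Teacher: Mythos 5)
Your proof is correct and takes essentially the same approach as the paper: fix $r_2$ small enough that the eigenvalue sign pattern \eqref{mu_js} holds on $B_+(0,r_2)$, conclude that $\mathbb{A}_\pm(\lambda)$ is hyperbolic there with $\dim S_\pm = 2$, $\dim U_\pm = 3$, and note that $B_+(0,r_2)$ is convex hence simply connected. The extra details you add (pairwise distinctness of the $\mu_j^\pm$, Riesz-projection continuity, and the closing remark that the $\mathcal{O}(\lambda^2)$ correction reinforces rather than endangers the slow-mode signs near the imaginary axis since $\beta_{j-3}^\pm/(a_{j-3}^\pm)^3$ carries the same sign as $1/a_{j-3}^\pm$) are consistent with, and slightly amplify, the paper's terse one-line justification of \eqref{mu_js}.
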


\begin{proof}
Choose $r_2\in(0,r_1)$ so that \eqref{mu_js} holds on $B_+(0,r_2)$. Then $\mathbb{A}_\pm(\lambda)$ admits stable and unstable subspaces $S_\pm(\lambda)$ and $U_\pm(\lambda)$ with $\dim S_\pm(\lambda)=2$ and $\dim U_\pm(\lambda)=3$; in particular, there is no center subspace. Finally, $B_+(0,r_2)$ is convex and thus simply connected. Consequently, $B_+(0,r_2)$ is contained in the domain of consistent splitting for \eqref{5}.
\end{proof}

We now construct the Evans function for \eqref{5} in a small neighborhood of $\lambda=0$
following the framework in \cite[Section~3.1]{MZ2}. Fix $r\in(0,r_2)$, where $r_2$ is as in Lemma~\ref{DCS}, and consider $\lambda \in B_+(0,r)$. Then, in view of \eqref{mu_js}, the solutions $W_1^+(x;\lambda)$ and $W_2^+(x;\lambda)$ given in Lemma~\ref{lemma0.2} decay as $x\to+\infty$, while $W_1^-(x;\lambda)$, $W_3^-(x;\lambda)$, and $W_5^-(x;\lambda)$ decay as $x\to-\infty$. We set
\begin{equation} \label{sb1}
\varphi_1^+(x;\lambda):=W_1^+(x;\lambda),\quad
\varphi_2^+(x;\lambda):=W_2^+(x;\lambda),
\end{equation}
and
\begin{equation} \label{sb2}
\varphi_3^-(x;\lambda):=W_1^-(x;\lambda),\quad
\varphi_4^-(x;\lambda):=W_3^-(x;\lambda),\quad
\varphi_5^-(x;\lambda):=W_5^-(x;\lambda).
\end{equation}
Using these decaying solutions, we define the Evans function by
\begin{equation}\label{Evans}
\mathcal D(\lambda)
:=\det\big(\varphi_1^+,\varphi_2^+,\varphi_3^-,\varphi_4^-,\varphi_5^-\big)\big|_{x=0}
\end{equation}
for $\lambda \in B_+(0,r)$. By Lemma~\ref{lemma0.2}, the functions $\varphi_j^\pm(x;\lambda)$ extend analytically to $B(0,r)$. Hence the determinant \eqref{Evans} yields an analytic extension of $\mathcal D(\lambda)$ to $B(0,r)$. 

\begin{remark} \label{agree}
On any simply connected subset of the domain of consistent splitting, zeros of the Evans function correspond to eigenvalues of $L$, counted with algebraic multiplicity; see \cite{GJ1,GJ2}. The analytic extension constructed above (cf. Lemma~\ref{lemma0.2}) preserves this property. Indeed, it can be interpreted within the \emph{extended spectral theory}, in which the order of vanishing of $\mathcal D(\lambda)$ agrees with the algebraic multiplicity of $\lambda$ as an \emph{effective} eigenvalue; see \cite[Section~6]{ZH}.
\end{remark}

\begin{remark}
The choice of bases in \eqref{Evans} is not unique. One may replace $\{\varphi_1^+,\varphi_2^+\}$ and $\{\varphi_3^-,\varphi_4^-,\varphi_5^-\}$ by any other analytic bases spanning the same decaying manifolds as $x\to+\infty$ and as $x\to-\infty$. Under such a change of basis, $\mathcal D(\lambda)$ is multiplied by a nonvanishing analytic factor, and hence its zeros (with multiplicity) are unchanged; cf. \cite[Remark~3.5]{MZ2}.
\end{remark}

We complete the construction of the Evans function by fixing a normalization at $\lambda=0$. By the translation invariance of the shock, the linearized operator $L$ has a zero eigenvalue with the corresponding eigenfunction $\bar{U}'=(\bar{v}',\bar{u}')^{\mathrm{tr}}$. Equivalently, a direct computation shows that
\begin{equation*}
W_0:= \big(\bar{v}',\bar{u}',(b_1\bar{v}' + b_2\bar{u}')',\bar{\phi}',\bar{\phi}'' \big)^{\mathrm{tr}}
\end{equation*}
solves the first-order ODE system \eqref{5} at $\lambda=0$, i.e.,
\begin{equation*}
W_0' = \mathbb{A}(x,0) W_0.
\end{equation*}
Moreover, $W_0(x)$ decays exponentially as $x\to\pm\infty$ by Lemma~\ref{SP}. At $\lambda=0$, the modes $\varphi_1^+$, $\varphi_2^+$, $\varphi_3^-$, and $\varphi_4^-$ are fast,
whereas $\varphi_5^-$ is a slow (center) mode with $\mu_5^-(0)=0$. Therefore, any solution that decays exponentially as $x\to-\infty$ lies in the span of the fast modes $\varphi_3^-$ and $\varphi_4^-$, and similarly any solution that decays exponentially as $x\to+\infty$ lies in the span of $\varphi_1^+$ and $\varphi_2^+$. Since $W_0$ decays at both ends, we may perform an analytic change of basis within the fast modes and relabel the resulting basis so that
\begin{equation}\label{relabel}
\varphi_1^+(x;0)\equiv \varphi_3^-(x;0)\equiv W_0(x), \quad x\in\mathbb{R}.
\end{equation}
The remaining fast modes $\varphi_2^+$ and $\varphi_4^-$ are modified accordingly, but still span the corresponding fast decaying manifolds, while $\varphi_5^-$ is unchanged.

\subsection{Computation of the Evans function}

We verify the Evans function condition $(\textbf{D2})$ in Remark~\ref{Rem_E-cond} by computing $\mathcal{D}'(0)$, which implies that the eigenvalue $\lambda=0$ is simple. We begin with a key proposition.

\begin{proposition} \label{D'rel}
Assume that the hypotheses of Theorem~\ref{Main} hold. Let $\mathcal D(\lambda)$ be the Evans function for \eqref{5} defined by \eqref{Evans}--\eqref{relabel}. Then, we have
\begin{equation}\label{gDel}
\mathcal D'(0) = \Gamma \Delta,
\end{equation}
where $\Gamma$ is a constant measuring transversality of the intersection of the stable and unstable manifolds spanned by the bases $\{\varphi_1^+, \varphi_2^+\}$ and $\{\varphi_3^-,\varphi_4^-,\varphi_5^- \}$, respectively, and $\Delta$ is defined by
\begin{equation} \label{LMdet}
\Delta := \det \left( U_+ - U_-, r_2^- \right).
\end{equation}
In particular, $\Gamma\neq 0$ if and only if this intersection is transverse.
\end{proposition}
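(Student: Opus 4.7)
The plan is to differentiate the $5\times 5$ Evans-function determinant \eqref{Evans} in $\lambda$ at $\lambda=0$, exploit the normalization \eqref{relabel} to extract a ``gap'' column $\psi$, and decompose $\psi$ in the solution space of the variational equation at $\lambda=0$ to obtain the factorization $\Gamma\Delta$.

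By \eqref{relabel}, columns $1$ and $3$ of $\mathcal{D}(\lambda)$ coincide at $\lambda=0$, so applying multilinearity to $\mathcal{D}'(0)$ leaves only the two terms obtained by differentiating $\varphi_1^+$ or $\varphi_3^-$ (the other three terms have two columns still equal to $W_0$ and thus vanish). Combining these via column antisymmetry gives
\[
\mathcal{D}'(0) = \det\bigl(W_0,\ \varphi_2^+(\cdot;0),\ \psi,\ \varphi_4^-(\cdot;0),\ \varphi_5^-(\cdot;0)\bigr)\bigl|_{x=0},
\qquad \psi := \partial_\lambda(\varphi_3^- - \varphi_1^+)\big|_{\lambda=0}.
\]
Differentiating $(\varphi_j^\pm)'=\mathbb{A}(x,\lambda)\varphi_j^\pm$ in $\lambda$ at $\lambda=0$ produces the same inhomogeneity $\mathbb{A}_\lambda(x,0)W_0$ for both $\partial_\lambda\varphi_1^+|_{\lambda=0}$ and $\partial_\lambda\varphi_3^-|_{\lambda=0}$, so it cancels in the difference. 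Hence $\psi$ solves the homogeneous equation $\psi'=\mathbb{A}(x,0)\psi$ and lies in the $5$-dimensional solution space of the variational equation at $\lambda=0$.

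By Lemma~\ref{low1}, this solution space decomposes into four fast modes and two slow (center) modes at $\lambda=0$. The columns $W_0$, $\varphi_2^+(\cdot;0)$, $\varphi_4^-(\cdot;0)$ are three independent fast modes, while $\varphi_5^-(\cdot;0)$ is one of the two slow modes. Let $W_{\mathrm{tr}}$ span the $1$-dimensional complement of $\mathrm{Span}\{W_0,\varphi_2^+(\cdot;0),\varphi_4^-(\cdot;0),\varphi_5^-(\cdot;0)\}$ in the $5$-dimensional solution space. Writing $\psi$ in this basis, only the $W_{\mathrm{tr}}$-component survives in the residual determinant (the other components duplicate existing columns), and I define $\Gamma$ to be the coefficient of $W_{\mathrm{tr}}$ in this expansion. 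Then
\[
\mathcal{D}'(0) = \Gamma \cdot \det\bigl(W_0,\ \varphi_2^+(\cdot;0),\ W_{\mathrm{tr}},\ \varphi_4^-(\cdot;0),\ \varphi_5^-(\cdot;0)\bigr)\bigl|_{x=0}.
\]
By construction, $\Gamma\ne 0$ iff $\psi$ has a nontrivial component outside $\mathrm{Span}\{\varphi_1^+,\varphi_2^+,\varphi_4^-,\varphi_5^-\}|_{\lambda=0}$, which is equivalent to transversality of the intersection of the stable and unstable manifolds spanned by $\{\varphi_1^+,\varphi_2^+\}$ and $\{\varphi_3^-,\varphi_4^-,\varphi_5^-\}$; with appropriate normalization of $W_{\mathrm{tr}}$, this $\Gamma$ matches the transversality constant defined in Appendix~\ref{App_trans}.

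Finally, I would identify the residual determinant with $\Delta$. Passing to $x\to-\infty$ via Abel's formula, the columns separate into their asymptotic components: $W_0$ and $\varphi_4^-(\cdot;0)$ decay exponentially along fast eigendirections of $\mathbb{A}_-(0)$; $\varphi_5^-(\cdot;0)$ approaches $\tilde r_2^- = (r_2^{-\mathrm{tr}},0,-1/(\sqrt{2}v_-),0)^{\mathrm{tr}}$, whose $(v,u)$-projection is $r_2^-$; and the $(v,u)$-projection of $W_{\mathrm{tr}}$ reduces, via the Rankine--Hugoniot relations \eqref{RH}, to a scalar multiple of the jump $U_+-U_-$. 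A cofactor expansion separating the $3$-dimensional fast block from the $2$-dimensional slow block then collapses the $5\times 5$ determinant to $\det(U_+-U_-,r_2^-)=\Delta$, yielding $\mathcal{D}'(0)=\Gamma\Delta$. The main obstacle will be this final identification: precisely matching the normalization of $W_{\mathrm{tr}}$ with the Appendix~\ref{App_trans} definition of $\Gamma$, and carefully extracting the $2\times 2$ Liu--Majda block from the $5\times 5$ Wronskian using the precise slow-mode asymptotics in Lemma~\ref{low1} together with the jump structure imposed by the Rankine--Hugoniot conditions.
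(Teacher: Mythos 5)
Your opening moves are the same as the paper's: multilinearity and the normalization \eqref{relabel} reduce $\mathcal{D}'(0)$ to a single determinant with the gap column $\psi = \partial_\lambda(\varphi_3^- - \varphi_1^+)|_{\lambda=0}$, and the observation that the inhomogeneity $\mathbb{A}_\lambda(x,0)W_0$ cancels in $\psi$ (so $\psi' = \mathbb{A}(x,0)\psi$) is a correct and clean remark. The genuine gap is in the second half, where you try to produce $\Delta$.

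The jump $U_+ - U_-$ does not disappear when you pass to the homogeneous first-order equation; it is encoded in the boundary data of $z^\pm$ at $\pm\infty$. The paper makes this explicit by returning to the second-order (integrated) equation and integrating from $x$ to $\pm\infty$: the result is the identity \eqref{temf3}, which says that $B\,z^{\pm\prime}_{1,2} + A\,z^\pm_{1,2} + (\text{Poisson terms}) = \bar{U} - U_\pm$, and hence the same expression evaluated on $\psi$ equals $U_- - U_+$ exactly. The paper's change of coordinates $R$ is designed precisely to isolate this ``$AU + BU'$'' block as rows $2$--$3$ of $\zeta$: for the fast modes these rows vanish, for $\varphi_5^-$ they give $a_2^- r_2^-$, and for $\zeta_z$ they give $-(U_+ - U_-)$. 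The $2\times 2$ subdeterminant of these rows is then $\pm a_2^-\Delta$, and the complementary $3\times 3$ block is (a scalar multiple of) the transversality coefficient $\Gamma$ in \eqref{gamma_def}. This is how $\Gamma$ and $\Delta$ separate.

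Your proposal replaces this with ``decompose $\psi$ in the five-dimensional solution space, extract the coefficient of $W_{\mathrm{tr}}$, and evaluate the residual Wronskian at $x\to -\infty$ via Abel's formula.'' Three concrete problems with this. First, the columns $\varphi_2^+(\cdot;0)$ and $W_{\mathrm{tr}}$ need not remain bounded as $x\to -\infty$ (indeed $\varphi_2^+$ lies along the growing fast direction $\gamma_2^-$ there), so the entrywise limit you want to take does not exist; Abel's formula controls the determinant's $x$-dependence but not the column-wise asymptotics you need for a cofactor expansion. Second, the claim that ``the $(v,u)$-projection of $W_{\mathrm{tr}}$ reduces, via the Rankine--Hugoniot relations, to a scalar multiple of the jump $U_+ - U_-$'' is unsupported and appears to be false with the natural choice of $W_{\mathrm{tr}}$: the unused bounded mode at $-\infty$ tends to the center eigenvector $\tilde r_1^-$, whose $(v,u)$-part is $r_1^-$, not the jump. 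The jump enters only through $\psi$ itself (via the integrated boundary terms), not through the basis vector $W_{\mathrm{tr}}$, so the mechanism you propose puts it in the wrong place. Third, after your decomposition $\mathcal{D}'(0) = c_{\mathrm{tr}}\cdot \det(\ldots,W_{\mathrm{tr}},\ldots)$, you still owe a proof that the residual $5\times5$ determinant equals $\Delta$ up to a constant absorbed into $\Gamma$; this is not a bookkeeping step, it is exactly the content of the proposition, and nothing in the proposal does the work. To repair the argument you would need the integrated identities \eqref{temf}--\eqref{temf3} (or an equivalent mechanism that records the boundary contributions $\bar U - U_\pm$), after which the factorization is linear algebra; without them the $\Delta$ factor simply cannot be extracted from the first-order Wronskian.
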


\begin{proof}
In the following, we denote $\varphi_j^\pm(x;0)$ by $\varphi_j^\pm(x)$ for simplicity. With the choice of bases in Section~\ref{Construction}, we have
\begin{equation} \label{D0cal}
\begin{split}
\mathcal{D}'(0) & = \det \left( \varphi_{1\lambda}^+  (x), \varphi_2^+(x), W_0(x), \varphi_4^-(x), \varphi_5^- (x) \right) \big|_{x=0} \\
& \quad + \det \left( W_0(x), \varphi_2^+(x),  \varphi_{3\lambda}^-(x), \varphi_4^-(x), \varphi_5^-(x) \right) \big|_{x=0} \\
& = \det \left( (z^+ - z^-)(x), \varphi_2^+(x), W_0(x), \varphi_4^-(x), \varphi_5^- (x) \right) \big|_{x=0},
\end{split}
\end{equation}
where $z^+(x) := \varphi_{1\lambda}^+(x)$ and $z^-(x) := \varphi_{3\lambda}^-(x)$. 

We first derive integrated identities for $\varphi_j^\pm(x)$ and $z^\pm(x)$ using their governing equations and far-field limits. By Lemma~\ref{lemma0.2}, the modes $\varphi_j^\pm(x)$ satisfy
\begin{equation} \label{tembd1}
\lim_{x \rightarrow +\infty} \varphi_1^+ (x) = \lim_{x \rightarrow +\infty}\varphi_2^+ (x) = \lim_{x \rightarrow -\infty}\varphi_3^- (x) = \lim_{x \rightarrow -\infty}\varphi_4^- (x) =0,
\end{equation}
and
\begin{equation} \label{tembd2}
\lim_{x \rightarrow -\infty} \varphi_5^- (x) = \begin{pmatrix}
{r_2^{-}}^{\mathrm{tr}} & 0 & \displaystyle -\frac{1}{\sqrt{2}v_-} & 0
\end{pmatrix}^{\mathrm{tr}}.
\end{equation}
Recall that \eqref{5} is equivalent to the following equation:
\begin{equation} \label{temODE1}
(AU)_x + (BU_x)_x + \left( D(\mathcal{A}^{-1} \mathcal{B} U)_x \right)_x + \left( E(\mathcal{A}^{-1} \mathcal{B} U)_{xx} \right)_x - \lambda \operatorname{diag} \{1,1\} U = 0,
\end{equation}
where the coefficient matrices $A,B,D,E$ are as in \eqref{ABDE} and $\phi = \mathcal{A}^{-1}\mathcal{B}v$. Therefore, for $|\lambda|$ sufficiently small, each $\textstyle \left( \varphi_{j,1}^\pm(x;\lambda), \varphi_{j,2}^\pm(x;\lambda) \right)^{\mathrm{tr}}$ satisfies \eqref{temODE1}.

At $\lambda=0$, substituting $(\varphi_{2,1}^+(x), \varphi_{2,2}^+(x))^{\mathrm{tr}}$ into \eqref{temODE1} and integrating the resulting equation from $x$ to $+\infty$, we have
\begin{equation} \label{temf}
B \begin{pmatrix}
\varphi_{2,1}^+ \\ \varphi_{2,2}^+
\end{pmatrix}' + A \begin{pmatrix}
\varphi_{2,1}^+ \\ \varphi_{2,2}^+
\end{pmatrix} = - \left[ \begin{pmatrix}
0 \\ D_{21} \varphi_{2,5}^+
\end{pmatrix} + \begin{pmatrix}
0 \\ E_{21} \left( \varphi_{2,5}^+ \right)'
\end{pmatrix} \right],
\end{equation}
where $'$ denotes $\textstyle \frac{d}{dx}$. Similarly, for $\varphi_4^-(x)$, we integrate \eqref{temODE1} from $x$ to $-\infty$ to obtain
\begin{equation} \label{temf1}
B \begin{pmatrix}
\varphi_{4,1}^- \\ \varphi_{4,2}^-
\end{pmatrix}' + A \begin{pmatrix}
\varphi_{4,1}^- \\ \varphi_{4,2}^-
\end{pmatrix} = - \left[ \begin{pmatrix}
0 \\ D_{21} \varphi_{4,5}^-
\end{pmatrix} + \begin{pmatrix}
0 \\ E_{21} \left( \varphi_{4,5}^- \right)'
\end{pmatrix} \right].
\end{equation}
By the far-field condition \eqref{tembd2} and $A_- r_2^- = a_2^- r_2^-$, the slow mode $\varphi_5^-(x)$ satisfies
\begin{equation} \label{temf2}
B \begin{pmatrix}
\varphi_{5,1}^- \\ \varphi_{5,2}^-
\end{pmatrix}' + A \begin{pmatrix}
\varphi_{5,1}^- \\ \varphi_{5,2}^-
\end{pmatrix} = - \left[ \begin{pmatrix}
0 \\ D_{21} \varphi_{5,5}^-
\end{pmatrix} + \begin{pmatrix}
0 \\ E_{21} \left( \varphi_{5,5}^- \right)'
\end{pmatrix} - a_2^- r_2^- \right].
\end{equation}
Differentiating \eqref{temODE1} with respect to $\lambda$ and evaluating at $\lambda=0$ along $U=\varphi_1^+$ and $U=\varphi_3^-$, we have the equation for $z^\pm(x)$:
\begin{equation*}
\left[ A \begin{pmatrix}
z_1^\pm \\ z_2^\pm
\end{pmatrix} + B \begin{pmatrix}
z_1^\pm \\ z_2^\pm
\end{pmatrix}' + \begin{pmatrix}
0 \\ D_{21} z_5^\pm
\end{pmatrix} +  \begin{pmatrix}
0 \\ E_{21} \left(z_5^\pm \right)'
\end{pmatrix} \right]' - \begin{pmatrix}
\bar{v}' \\ \bar{u}'
\end{pmatrix} =0.
\end{equation*}
We then integrate this from $x$ to $\pm\infty$ to obtain
\begin{equation} \label{temf3}
B \begin{pmatrix}
z_1^\pm \\ z_2^\pm
\end{pmatrix}' +  A \begin{pmatrix}
z_1^\pm \\ z_2^\pm
\end{pmatrix} = - \left[ \begin{pmatrix}
0 \\ D_{21} z_5^\pm
\end{pmatrix} +  \begin{pmatrix}
0 \\ E_{21} \left(z_5^\pm \right)'
\end{pmatrix} - \begin{pmatrix}
\bar{v} - v_\pm \\ \bar{u} - u_\pm
\end{pmatrix} \right].
\end{equation}

Next we introduce a change of coordinates $R$ and the associated matrix $M$:
\begin{equation*}
R := \begin{pmatrix}
b_1 & b_2 & 0 & 0 & 0 \\
A_{11} & A_{12} & 0 &0 & 0 \\
A_{21} - b_1' & A_{22} - b_2' & 1 & 0 & 0 \\
0 & 0 & 0 & 1 & 0 \\
0 & 0 & 0 & 0 & 1
\end{pmatrix}, \quad M := \begin{pmatrix} b_1 & b_2 \\ A_{11} & A_{12}\end{pmatrix},
\end{equation*}
so that for $W=(v,u,\tilde u',\phi,\phi')^{\mathrm{tr}}$ we may write
\begin{equation*}
RW = \begin{pmatrix}
\tilde{u} \\
BU' + AU \\
\phi \\
\phi'
\end{pmatrix} =: \zeta.
\end{equation*}
Note that $R$ is non-singular since
\begin{equation*}
\det R = \det M = \frac{\varepsilon^2 \bar{\phi}'}{\bar{v}^3} - \frac{s \nu}{\bar{v}} < 0 \quad \text{for all }  x \in \mathbb{R},
\end{equation*}
where we used $\bar{\phi}' <0$ in Lemma~\ref{SP}.

Multiplying the columns of \eqref{D0cal} by $R(0)$ gives
\begin{equation}\label{D0cal_zeta}
\mathcal D'(0)
= \frac{1}{\det R(0)}\,
\det \left( \zeta_z,\ \zeta_2^+,\ \zeta_0,\ \zeta_4^-,\ \zeta_5^- \right) \big|_{x=0},
\end{equation}
where
\begin{equation*}
\zeta_z := R(z^+-z^-),\quad
\zeta_2^+ := R\varphi_2^+,\quad
\zeta_0 := RW_0,\quad
\zeta_4^- := R\varphi_4^-,\quad
\zeta_5^- := R\varphi_5^- .
\end{equation*}
For a fast mode $\varphi$ (e.g., $\varphi=\varphi_2^+$ or $\varphi=\varphi_4^-$), we have
\begin{equation*}
R \varphi = \begin{pmatrix}
b_1 \varphi_1 + b_2 \varphi_2 \\
0 \\
- D_{21} \varphi_5 - E_{21} (\varphi_5)'\\
\varphi_4 \\
\varphi_5
\end{pmatrix}.
\end{equation*}
We claim that the third component of $R\varphi$ can be written as a linear combination of the remaining components. Recall that the last relation in \eqref{coordinate} reads
\begin{equation} \label{LPrel}
\varphi_5' = \left( \frac{\bar{\phi}''}{\bar{v}} - \frac{2\bar{v}'\bar{\phi}'}{\bar{v}^2} + \frac{\bar{v}e^{\bar{\phi}}}{\varepsilon^2} \right) \varphi_1 + \frac{\bar{\phi}'}{\bar{v}} \varphi_1' + \frac{\bar{v}^2 e^{\bar{\phi}}}{\varepsilon^2} \varphi_4 + \frac{\bar{v}'}{\bar{v}} \varphi_5.
\end{equation}
On the other hand, \eqref{temf} together with \eqref{co1} evaluated at $\lambda=0$ yields
\begin{equation} \label{feq}
(b_1 - sb_2)\varphi_1' =  - A_{21} \varphi_1 - A_{22}\varphi_2 - D_{21} \varphi_5 - E_{21} \varphi_5'.
\end{equation}
We then combine \eqref{LPrel}--\eqref{feq} to obtain
\begin{equation} \label{phi5rel}
\begin{split}
\varphi_5' & = \left( 1 + \frac{E_{21} \bar{\phi}'}{\bar{v}(b_1 - sb_2)} \right)^{-1} \bigg[ \left( \frac{\bar{\phi}''}{\bar{v}} - \frac{2\bar{v}'\bar{\phi}'}{\bar{v}^2} + \frac{\bar{v}e^{\bar{\phi}}}{\varepsilon^2} - \frac{A_{21} \bar{\phi}'}{\bar{v}(b_1 - sb_2)} \right) \varphi_1 - \frac{A_{22} \bar{\phi}'}{\bar{v}(b_1 - sb_2)} \varphi_2\\
& \qquad  + \frac{\bar{v}^2 e^{\bar{\phi}}}{\varepsilon^2} \varphi_4 + \left( \frac{\bar{v}'}{\bar{v}} - \frac{D_{21}\bar{\phi}'}{\bar{v}(b_1 - sb_2)} \right) \varphi_5\bigg].
\end{split}
\end{equation}
Here, using $\textstyle b_1=\frac{\varepsilon^2 \bar{\phi}'}{\bar{v}^3}, b_2 = \frac{\nu}{\bar{v}}, E_{21}= -\frac{\varepsilon^2}{\bar{v}^2}$, and $\bar{\phi}' <0$, we have
\begin{equation*}
1 + \frac{E_{21} \bar{\phi}'}{\bar{v}(b_1 - sb_2)} = 1 + \frac{-\varepsilon^2 \bar{\phi}' / \bar{v}^2}{\varepsilon^2 \bar{\phi}'/\bar{v}^2 - s\nu} = \frac{ - s\nu}{\varepsilon^2 \bar{\phi}'/\bar{v}^2 - s\nu} > 0.
\end{equation*}
Since $(\tilde{u},(AU)_1)^{\mathrm{tr}} = M (v,u)^{\mathrm{tr}}$, it follows from \eqref{temf}--\eqref{temf1} that
\begin{equation} \label{LPrel1}
\begin{pmatrix}
\varphi_1 \\ \varphi_2
\end{pmatrix}  = M^{-1} \begin{pmatrix}
b_1 \varphi_1 + b_2 \varphi_2 \\
0
\end{pmatrix} = \frac{1}{\det M} \begin{pmatrix}
A_{12} (b_1 \varphi_1 + b_2 \varphi_2) \\
- A_{11} (b_1 \varphi_1 + b_2 \varphi_2)
\end{pmatrix}.
\end{equation}
Therefore, $\varphi_5'$ can be written as a linear combination of $(b_1\varphi_1 +b_2 \varphi_2), \varphi_4, \varphi_5$, and hence the third component of $R\varphi$ is a linear combination of the remaining components. This proves the claim.

From \eqref{temf2} and \eqref{temf3}, we find
\begin{equation*}
\zeta_5^- = R \varphi_5^- = \begin{pmatrix}
b_1 \varphi_{5,1}^- + b_2 \varphi_{5,2}^- \\
a_2^- r_{2,1}^- \\
- D_{21} \varphi_{5,5}^- - E_{21} (\varphi_{5,5}^-)' + a_2^- r_{2,2}^- \\
\varphi_{5,4}^- \\
\varphi_{5,5}^-
\end{pmatrix}
\end{equation*}
and
\begin{equation*}
\zeta_z = R (z^+ - z^-) = \begin{pmatrix}
b_1(z_1^+ - z_1^-) + b_2 (z_2^+ - z_2^-) \\
- (v_+ - v_-) \\
-D_{21} (z_5^+ - z_5^-) - E_{21} (z_5^+ - z_5^-)' - (u_+ - u_-) \\
(z_4^+ - z_4^-) \\
(z_5^+ - z_5^-)
\end{pmatrix},
\end{equation*}
respectively. As in \eqref{LPrel}--\eqref{LPrel1} for fast modes, the first two terms of the third component of $R\varphi_5^-$ and $R(z^+-z^-)$ can be expressed as a linear combination of the other components. Thus, we may eliminate the first two terms in the third-row entries of \eqref{D0cal_zeta} by elementary row operations. After this elimination and a suitable permutation of rows and columns, we arrive at
\begin{equation*}
\mathcal{D}'(0) = - \frac{1}{\det R(0)} \det \begin{pmatrix}
v_- - v_+ & a_2^- r_{2,1}^- & 0 & 0 & 0 \\
u_- - u_+ & a_2^- r_{2,2}^- & 0 & 0 & 0 \\
\zeta_{z,1} & \zeta_{5,1}^- & \zeta_{2,1}^+ & \zeta_{0,1} & \zeta_{4,1}^- \\
\zeta_{z,4} & \zeta_{5,4}^- & \zeta_{2,4}^+ & \zeta_{0,4} & \zeta_{4,4}^- \\
\zeta_{z,5} & \zeta_{5,5}^- & \zeta_{2,5}^+ & \zeta_{0,5} & \zeta_{4,5}^- \\
\end{pmatrix}\Bigg|_{x=0}.
\end{equation*}
This can be factored as
\begin{equation}\label{Dprime_factored}
\mathcal D'(0) = \Gamma \det\big( U_+ - U_-,\ r_2^- \big),
\end{equation}
where the factor $\Gamma$ is given by
\begin{equation}\label{gamma_def}
\Gamma := \frac{a_2^-}{\det R(0)} \det \begin{pmatrix}
\zeta_{2,1}^+ & \zeta_{0,1} & \zeta_{4,1}^-\\
\zeta_{2,4}^+ & \zeta_{0,4} & \zeta_{4,4}^-\\
\zeta_{2,5}^+ & \zeta_{0,5} & \zeta_{4,5}^-
\end{pmatrix}\Bigg|_{x=0}.
\end{equation}
Note that the determinant in \eqref{gamma_def} measures the linear independence of the remaining fast modes. This implies the last assertion in Proposition~\ref{D'rel}.
\end{proof}

In the following, we conclude $\mathcal{D}'(0) \neq 0$ by verifying $\Gamma \neq 0$ and $\Delta \neq 0$. We first establish that $\Gamma\neq 0$.

\begin{lemma}[Transversality] \label{trans}
Assume that the hypotheses of Theorem~\ref{Main} hold. The transversality coefficient $\Gamma$ in Proposition~\ref{D'rel} satisfies $\Gamma\neq 0$.
\end{lemma}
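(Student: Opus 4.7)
The plan is to prove $\Gamma \neq 0$ by recasting it as a geometric transversality statement for the profile ODE \eqref{waveeq} at small amplitude, and then exploiting the scalar center-manifold reduction that underlies Lemma~\ref{SP}. Concretely, at $\lambda=0$ the first-order system \eqref{5} is exactly the linearization of \eqref{waveeq} about the profile, written in the enlarged coordinates \eqref{coord}. Thus $\{\varphi_1^+(\cdot;0),\varphi_2^+(\cdot;0)\}$ spans the tangent space of the stable manifold $W^s_+$ of the equilibrium $(v_+,u_+,\phi_+)$ along $W_0$, and $\{\varphi_3^-(\cdot;0),\varphi_4^-(\cdot;0)\}$ spans the tangent space of the unstable manifold $W^u_-$ of $(v_-,u_-,\phi_-)$ along $W_0$, while the normalization \eqref{relabel} places the translational direction $W_0$ in both. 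After the row reductions used in deriving \eqref{gamma_def}, the $3\times 3$ determinant there is, up to the nonzero factor $a_2^-/\det R(0)$, the Wronskian of $\varphi_2^+(0)$, $W_0(0)$, $\varphi_4^-(0)$ in the appropriate three-dimensional reduced space. Hence $\Gamma \neq 0$ is equivalent to $T_{W_0(0)} W^s_+ + T_{W_0(0)} W^u_-$ being three-dimensional, i.e., to $W^s_+$ and $W^u_-$ intersecting transversely along the one-dimensional line spanned by the profile derivative.

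To verify this transversality, I would follow \cite[Section~2]{DLZ}. Eliminating $\bar u$ via \eqref{waveeq_1} and treating the amplitude $\delta_S$ (equivalently, the shock speed $s$ constrained by \eqref{RH1}) as a bifurcation parameter, the profile ODE becomes a smooth four-dimensional first-order system in $(\bar v, \bar v_x, \bar\phi, \bar\phi_x)$. Linearizing at the reference equilibrium $(v_-,0,-\ln v_-,0)$ in the degenerate limit $\delta_S\to 0$ yields a double zero eigenvalue, reflecting tangency to the principal characteristic family, together with two hyperbolic eigenvalues whose real parts stay bounded away from zero. The standard center-manifold theorem then produces a smooth two-dimensional parameter-augmented center manifold on which the dynamics reduces to a scalar ODE $\bar v_x = F(\bar v;\delta_S)$, while the complementary hyperbolic directions admit an exponential dichotomy with rates uniform for small $\delta_S$.

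In this reduced scalar equation the profile appears as a monotone heteroclinic connection between simple zeros $v_\pm(\delta_S)$ of $F(\cdot;\delta_S)$, unique up to $x$-translation; this is precisely the construction in \cite{DLZ}. Simplicity of the zeros gives a one-dimensional, nondegenerate kernel for the linearization of the reduced flow. Combining this with the exponential dichotomy in the hyperbolic complement through a gap-lemma/Lyapunov--Schmidt argument applied to the jump at $x=0$, the full four-dimensional manifolds $W^s_+$ and $W^u_-$ intersect transversely along the single orbit $\{W_0(\cdot+\tau)\}_{\tau\in\mathbb{R}}$. Lifting this back to the five-dimensional system \eqref{5} using the coupling relations \eqref{LPrel}--\eqref{LPrel1} (the additional direction being a slow Poisson mode that at $\lambda=0$ decouples and contributes only a nonvanishing multiplicative factor in the reduction leading to \eqref{gamma_def}), the transversality is exactly the nonvanishing of the $3\times 3$ determinant in \eqref{gamma_def}, which gives $\Gamma \neq 0$.

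The main obstacle will be the bookkeeping in the last step: the $\zeta$-variables in \eqref{D0cal_zeta}, together with the row reductions that exploit \eqref{LPrel}--\eqref{LPrel1}, mix the profile-ODE phase space with the two extra Poisson components introduced by the enlarged coordinates \eqref{coord}, so one must carefully verify that the transversality proved for the reduced profile ODE translates into linear independence of precisely the three vectors appearing in \eqref{gamma_def}. A secondary subtlety is to show that the center-manifold reduction is valid with constants uniform in $\varepsilon$ and $\nu$, so that the reduced scalar equation retains its Burgers-type normal form and the nondegeneracy of the profile derivative persists; this is where the smallness assumption on $\delta_S$ in Lemma~\ref{SP} is essential.
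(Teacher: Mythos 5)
Your proposal takes a genuinely different route from the paper's Appendix~\ref{App_trans}, although the conceptual starting point agrees with the heuristic sketched in Section~\ref{Outline}. You propose to revisit the nonlinear center-manifold reduction of the profile ODE from \cite{DLZ}, obtain a scalar equation $\bar v_x = F(\bar v;\delta_S)$ with a nondegenerate heteroclinic, and then lift transversality back to \eqref{5} by a Lyapunov--Schmidt/gap-lemma argument. The paper instead works entirely at the level of the linearized (variational) equation: it observes that at $\lambda=0$ the integrated constraints from \eqref{co1} and \eqref{co3} reduce \eqref{5} to a three-dimensional first-order system $V'=\widetilde{A}(x;s)V$ in $V=(u,\phi,\phi')^{\mathrm{tr}}$, identifies the constant reference matrix $A_0$ (the $\delta_S\to 0$ limit) whose spectrum is $\{\sigma_-<0=\sigma_0<\sigma_+\}$, diagonalizes, and then applies the tracking/reduction lemma of \cite[Prop.~3.9]{MZ2} to the perturbed system $Z'=(\widetilde M+\widetilde\Theta(x))Z$ with $\|\widetilde\Theta\|_{L^\infty}\le C\delta_S$. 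This produces invariant graphs $\mathcal G_\pm(x)$ for the manifolds of solutions bounded as $x\to\pm\infty$; intersecting them yields a linear system whose coefficient matrix is $\mathcal O(\delta_S)$-close to the identity, hence the space of globally bounded solutions of the variational equation is exactly one-dimensional, spanned by $\bar V'$. Both routes ultimately verify the same standard equivalence (one-dimensionality of the bounded-solution space $\iff$ transversality $\iff\Gamma\neq0$). What your approach buys is closer contact with the geometric structure underlying the existence proof in \cite{DLZ}; what the paper's approach buys is that it is entirely self-contained at the linear level, needs no uniformity check for the center-manifold reduction, and reuses existing machinery. The two secondary concerns you flag---the bookkeeping of the lifting from the reduced ODE back to the $3\times 3$ determinant in \eqref{gamma_def}, and the uniform-in-parameters validity of the scalar reduction---are genuine work items that would have to be carried out if you pursued your route, but the paper simply never encounters them: the constraint reduction $5\to 3$ dimensions and the tracking lemma together deliver the conclusion directly, with the smallness of $\delta_S$ entering only through the bound $\|\widetilde{\Theta}\|_{L^\infty}\le C\delta_S$. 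Also note that the paper works with a three-dimensional reduced phase space (because the integrated constraint from \eqref{co3} eliminates $\tilde u'$ as well as $v$), whereas your proposal retains a four-dimensional phase space for the profile ODE; the latter carries an extra conserved-quantity direction (a second zero eigenvalue at the reference point), which is harmless but adds a step you would need to account for when matching with the $3\times3$ determinant in \eqref{gamma_def}.
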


\begin{proof}
The proof requires a separate analysis of ODEs and is therefore deferred to Appendix~\ref{App_trans}.
\end{proof}

As a consequence of Proposition~\ref{D'rel} and Lemma~\ref{trans}, we obtain the following corollary.

\begin{corollary}[Simplicity of $\lambda=0$]
Assume that the hypotheses of Theorem~\ref{Main} hold. The linear operator $L$ defined in \eqref{Ldef}--\eqref{calAB} has a zero eigenvalue of algebraic multiplicity 1.
\end{corollary}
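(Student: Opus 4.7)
The plan is to assemble the corollary from the two results immediately preceding it, contributing only the verification that the Liu--Majda determinant is nonzero. Existence of a zero eigenvalue is not in question: translation invariance of the profile gives $L\bar U'=0$ with $\bar U'=(\bar v',\bar u')^{\mathrm{tr}}\in L^2(\mathbb R)^2$ (by Lemma~\ref{SP}), so $\lambda=0\in\sigma_{\mathrm{pt}}(L)$. The entire task therefore reduces to showing that the algebraic multiplicity of $\lambda=0$ is exactly one, which, in view of Remark~\ref{agree} and the extended spectral theory of \cite[Section~6]{ZH}, is equivalent to proving that the analytically extended Evans function $\mathcal D$ has a simple zero at $\lambda=0$.

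I would first record that $\mathcal D(0)=0$. This is automatic from the normalization \eqref{relabel}: the columns $\varphi_1^+(\cdot\,;0)$ and $\varphi_3^-(\cdot\,;0)$ both coincide with $W_0$, making the determinant in \eqref{Evans} degenerate at $\lambda=0$. Next I would invoke Proposition~\ref{D'rel} to write $\mathcal D'(0)=\Gamma\Delta$, with $\Gamma\neq 0$ supplied by Lemma~\ref{trans}. It then remains only to evaluate $\Delta=\det(U_+-U_-,r_2^-)$ explicitly. Using the formula for $r_2^-$ in \eqref{lrj}, the Rankine--Hugoniot relation $u_+-u_-=-s(v_+-v_-)$ from \eqref{RH}, and the shock speed $s=s_+=\sqrt{(T+1)/(v_+v_-)}$ from \eqref{RH1}, a direct computation gives
\[
\Delta=\tfrac{1}{\sqrt 2}\Bigl[(v_+-v_-)\sqrt{\tfrac{T+1}{v_-^2}}-(u_+-u_-)\Bigr]=\tfrac{v_+-v_-}{\sqrt 2}\left(\sqrt{\tfrac{T+1}{v_-^2}}+\sqrt{\tfrac{T+1}{v_+v_-}}\right),
\]
which is strictly positive under the Lax condition $v_+>v_-$ in \eqref{Lax} since $T+1>0$ and $v_\pm>0$. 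Therefore $\mathcal D'(0)=\Gamma\Delta\neq 0$, so the zero at $\lambda=0$ is simple and the algebraic multiplicity is one.

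Honestly, at this point there is no serious obstacle: the structural work has been done upstream, with the decomposition $\mathcal D'(0)=\Gamma\Delta$ in Proposition~\ref{D'rel} and the transversality analysis behind Lemma~\ref{trans} (which is itself the real technical core, deferred to Appendix~\ref{App_trans}). The final step at the level of this corollary is just the one-line hyperbolic check that $\Delta\neq 0$, i.e.\ Majda's stability condition for the limiting quasi-neutral Euler 2-shock, which holds automatically as a consequence of the Lax condition and Rankine--Hugoniot. The only point that needs a little care is ensuring that the identification in Remark~\ref{agree} is applied on a region where the extended-spectral-theory conventions apply, which is guaranteed by Lemma~\ref{DCS} and the analytic extension constructed in Section~\ref{Construction}.
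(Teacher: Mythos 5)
Your proposal is correct and follows the paper's argument essentially verbatim: establish $\lambda=0\in\sigma_{\mathrm{pt}}(L)$ via $L\bar U'=0$, reduce simplicity to $\mathcal D'(0)\neq 0$ by Remark~\ref{agree}, factor $\mathcal D'(0)=\Gamma\Delta$ by Proposition~\ref{D'rel}, invoke Lemma~\ref{trans} for $\Gamma\neq 0$, and compute $\Delta$ directly from \eqref{RH}, \eqref{RH1}, and the Lax condition \eqref{Lax}. The explicit evaluation of $\Delta$ agrees with the paper's \eqref{DeltaCom}, and the extra remark that $\mathcal D(0)=0$ follows automatically from the normalization \eqref{relabel} is a correct observation that the paper leaves implicit.
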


\begin{proof}
It is straightforward to check that $L\bar{U}'=0$, and hence $L$ has a zero eigenvalue. Therefore, by the property of the Evans function described in Remark~\ref{agree}, it suffices to show that $\mathcal{D}'(0)\neq 0$. We compute
\begin{equation} \label{DeltaCom}
\begin{split}
\Delta = \det \left( U_+-U_-, r_2^- \right) & = \det \begin{pmatrix}
v_+ - v_- & \textstyle \frac{1}{\sqrt{2}} \\
u_+ - u_- & \textstyle \frac{1}{\sqrt{2}} \sqrt{\frac{T+1}{v_-^2}}
\end{pmatrix} \\
& = \sqrt{\frac{T+1}{2v_-^2}}(v_+-v_-) - \frac{1}{\sqrt{2}} (u_+-u_-) \\
& = \frac{v_+-v_-}{\sqrt{2}} \left( \sqrt{\frac{T+1}{v_-^2}} + s \right) >0,
\end{split}
\end{equation}
where we used \eqref{RH} and $\textstyle s=\sqrt{\frac{T+1}{v_+v_-}}$ with $v_+>v_-$. The simplicity of $\lambda=0$ then follows from Proposition~\ref{D'rel} together with Lemma~\ref{trans}.
\end{proof}

\appendix

\section{The conjugation lemma} \label{Appendix_A}

We state the conjugation lemma of \cite{MeZ}, omitting the proof, which is a refinement of the gap lemma of \cite{GZ, KS}. For a detailed proof, we refer the reader to \cite[Appendix~C]{MZ2}.

Consider a family of $N \times N$ first-order ODE systems
\begin{equation} \label{55}
W' = \mathbb{A} (x, \lambda) W, \quad x \in \mathbb{R}, \ \lambda \in \Omega,
\end{equation}
where $'$ denotes $\textstyle \frac{d}{dx}$ and the domain $\Omega$ is an open subset of $\mathbb{C}$. An eigenvalue problem may be reformulated as \eqref{55} by parametrizing $\lambda$ and, if necessary, rewriting it as a first-order system. We make the following assumption:

\medskip
\noindent\textbf{(H0)} The map $\lambda\mapsto \mathbb{A}(\cdot,\lambda)$ is analytic from $\Omega$ into $L^\infty(\mathbb{R})$. Moreover, there exist limiting matrices $\mathbb{A}_\pm(\lambda)$ such that $\mathbb{A}(x,\lambda)\to\mathbb{A}_\pm(\lambda)$ exponentially as $x\to\pm\infty$, with uniform bounds on $x$-derivatives: for some fixed $K\in\mathbb{N}$ and for $\lambda$ in compact subsets of $\Omega$,
\begin{equation}\label{eq:evans:expconv}
  \left| (\partial/\partial x)^k \left(\mathbb{A}(x,\lambda)-\mathbb{A}_\pm(\lambda) \right)\right| \leq  C e^{-\theta |x|},   \quad x \gtrless 0, \ \ 0 \leq k \leq K
\end{equation}
for some constants $C,\theta >0$.

\medskip
We write the constant-coefficient limiting systems
\begin{equation}\label{A.2}
Z_\pm'=\mathbb A_\pm(\lambda) Z_\pm.
\end{equation}
The following lemma is a restatement of \cite[Lemma~3.1]{MZ2}. 

\begin{lemma}[\cite{MeZ}]\label{Gap}
Assume \textup{(H0)} and fix $\lambda_0 \in \Omega$. There exist $r>0$ such that $\overline{B(\lambda_0,r)} \subset \Omega$ and there exists a pair of linear transformations defined for $\lambda \in B(\lambda_0,r)$,
\begin{equation*}
\begin{split}
&P_+(x,\lambda) = I + \Theta_+(x,\lambda), \quad x \geq 0, \\
&P_-(x,\lambda) = I + \Theta_-(x,\lambda), \quad x \leq 0,
\end{split}
\end{equation*}
where $\Theta_+$ and $\Theta_-$ are analytic in $\lambda$ as functions from $B(\lambda_0,r)$ to $L^\infty \left([0,\infty);\mathbb{C}^{N\times N}\right)$ and $L^\infty\left((-\infty,0];\mathbb{C}^{N\times N}\right)$, respectively. Moreover,
\begin{equation} \label{eq:evans:ThetaBounds}
\left| (\partial/\partial \lambda)^j (\partial / \partial x)^k \Theta_\pm(x,\lambda) \right| \leq C_j \tilde{C} e^{- \tilde{\theta} |x|}, \quad x \gtrless 0, \ \ 0 \leq k \leq K+1, \ \ j \geq 0
\end{equation}
for some constants $\tilde{\theta}, \tilde{C}>0$, where $C_j>0$ depends only on $j$, $\tilde{\theta}$, the modulus of the entries of $\mathbb{A}$ at $\lambda_0$, and the modulus of continuity of $\mathbb{A}$ on $\overline{B(\lambda_0,r)}$. Furthermore, the transformations $P_\pm$ satisfy the following properties:
\begin{enumerate}[(i)]
\item $P_+$ and $P_+^{-1}$ are uniformly bounded on $x \geq 0$, and $P_-$ and $P_-^{-1}$ are uniformly bounded on $x \leq 0$.
\item The change of coordinates $W:=P_\pm Z_\pm$ reduces \eqref{55} on $x \geq 0$ and $x \leq 0$, respectively, to the limiting systems \eqref{A.2}. Equivalently, solutions of \eqref{55} may be factorized as
\begin{equation}\label{eq:evans:factorization}
  W = \left(I+\Theta_\pm \right) Z_\pm,
\end{equation}
where $Z_\pm$ are solutions of \eqref{A.2} and $\Theta_\pm$ satisfy \eqref{eq:evans:ThetaBounds}.
\end{enumerate}
\end{lemma}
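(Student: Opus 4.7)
The plan is to construct $\Theta_\pm$ as the fixed point of a Sylvester-type integral equation, following the M\'etivier--Zumbrun strategy. Substituting the ansatz $W = (I + \Theta_\pm) Z_\pm$ into the original system \eqref{55} and requiring $Z_\pm$ to satisfy the limiting equation \eqref{A.2} forces $\Theta_\pm$ to solve the matrix ODE
\begin{equation*}
\Theta_\pm' - [\mathbb{A}_\pm(\lambda),\Theta_\pm] = \bigl(\mathbb{A}(x,\lambda) - \mathbb{A}_\pm(\lambda)\bigr)\bigl(I + \Theta_\pm\bigr),
\end{equation*}
subject to $\Theta_\pm(x,\lambda) \to 0$ as $x \to \pm\infty$. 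Using the map $\Psi \mapsto e^{x\mathbb{A}_\pm}\Psi e^{-x\mathbb{A}_\pm}$ as an integrating factor to invert the commutator operator and integrating from $\pm\infty$ inward so as to enforce the prescribed decay, this boundary-value problem is equivalent to the integral equation
\begin{equation*}
\Theta_\pm(x,\lambda) = -\int_x^{\pm\infty} e^{(x-y)\mathbb{A}_\pm(\lambda)} \bigl(\mathbb{A}(y,\lambda) - \mathbb{A}_\pm(\lambda)\bigr)\bigl(I + \Theta_\pm(y,\lambda)\bigr) e^{-(x-y)\mathbb{A}_\pm(\lambda)}\,dy,
\end{equation*}
which is the starting point of the contraction argument.

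Next, I would set up the fixed-point problem in the weighted Banach space
\begin{equation*}
X_\pm := \Bigl\{ \Theta \in C(\mathbb{R}_\pm;\mathbb{C}^{N\times N}) : \|\Theta\|_{X_\pm} := \sup_{\pm x \geq 0} e^{\tilde{\theta}|x|}|\Theta(x)| < \infty \Bigr\}
\end{equation*}
for a suitable positive weight $\tilde{\theta}$. The crucial point is that the conjugation kernel $M \mapsto e^{(x-y)\mathbb{A}_\pm(\lambda)} M e^{-(x-y)\mathbb{A}_\pm(\lambda)}$ is governed by the spectrum of $\mathrm{ad}_{\mathbb{A}_\pm(\lambda)}$; to handle the fact that this spectrum is not a priori contained in a half-plane, I would decompose $\Theta_\pm$ along the spectral projectors of $\mathrm{ad}_{\mathbb{A}_\pm(\lambda)}$ and integrate each block in the direction (from $x$ toward $\pm\infty$ or from $0$ toward $x$) for which the associated Green's function decays. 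With this block-by-block formulation, the right-hand side of the integral equation defines a map on $X_\pm$ whose affine part is small (by \textup{(H0)} and the exponential decay of $\mathbb{A} - \mathbb{A}_\pm$) and whose Lipschitz constant is small, so Banach's theorem produces a unique fixed point $\Theta_\pm(\cdot,\lambda) \in X_\pm$ satisfying $|\Theta_\pm(x,\lambda)| \leq \tilde{C} e^{-\tilde{\theta}|x|}$.

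The remaining conclusions follow by standard bootstraps. The analytic dependence of the integrand on $\lambda \in B(\lambda_0,r)$, together with uniform convergence of the Picard iterates on compact subsets, propagates analyticity to $\Theta_\pm$; Cauchy's integral formula on a slightly smaller disc then yields the weighted bounds on the $\lambda$-derivatives $\partial_\lambda^j \Theta_\pm$ for all $j \geq 0$. The $x$-derivative bounds for $1 \leq k \leq K+1$ are obtained by successively differentiating the matrix ODE above, invoking \eqref{eq:evans:expconv}, and using the already-established bounds on lower-order derivatives. Properties (i) and (ii) are then immediate: (ii) is the substitution identity that produced the matrix ODE, while (i) follows from the decay bound $|\Theta_\pm(x)| \leq \tilde{C} e^{-\tilde{\theta}|x|}$ via a Neumann-series argument, after possibly shrinking $r$ so that $I + \Theta_\pm$ remains uniformly invertible on $\overline{B(\lambda_0,r)}$.

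The main obstacle is the careful handling of the spectral structure of $\mathrm{ad}_{\mathbb{A}_\pm(\lambda)}$. Because this operator in general has eigenvalues of both signs and need not admit a uniform spectral gap---in the application to \eqref{5} near $\lambda=0$, for instance, the matrices $\mathbb{A}_\pm(0)$ possess a nontrivial kernel, so $\mathrm{ad}_{\mathbb{A}_\pm(0)}$ has a nontrivial center subspace---the direction of integration cannot be chosen uniformly across all spectral blocks, and the weight $\tilde{\theta}$ must be tuned against the decay rate $\theta$ of $\mathbb{A} - \mathbb{A}_\pm$ supplied by \textup{(H0)}. This refinement, which permits conjugation in the absence of a spectral gap and is precisely the improvement of the conjugation lemma over a naive diagonalization argument, is what makes the result applicable in the present Navier--Stokes--Poisson setting.
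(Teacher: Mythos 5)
The paper itself does not prove Lemma~\ref{Gap}: it is quoted from M\'etivier--Zumbrun, with the proof deferred to \cite[Appendix~C]{MZ2}. Measured against that standard proof, your outline has the right architecture: the conjugation ODE $\Theta_\pm'-[\mathbb{A}_\pm,\Theta_\pm]=(\mathbb{A}-\mathbb{A}_\pm)(I+\Theta_\pm)$, inversion of $d/dx-\mathrm{ad}_{\mathbb{A}_\pm(\lambda)}$ block-by-block along the spectral decomposition of the adjoint operator with a per-block choice of integration direction, a weighted fixed-point formulation with $\tilde\theta<\theta$, and bootstraps for analyticity in $\lambda$ and for the $x$- and $\lambda$-derivative bounds. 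Those parts are consistent with the cited argument.

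Two steps, however, do not hold as written. First, you assert that the affine part and the Lipschitz constant of the fixed-point map are small ``by \textup{(H0)} and the exponential decay of $\mathbb{A}-\mathbb{A}_\pm$''. Hypothesis \textup{(H0)} gives $|\mathbb{A}(x,\lambda)-\mathbb{A}_\pm(\lambda)|\le C e^{-\theta|x|}$ with no smallness of $C$, so on the whole half-line the Lipschitz constant of your map on $X_\pm$ is of order $C/(\theta-\tilde\theta)$ (with additional polynomial factors from Jordan blocks), which need not be less than one; Banach's theorem does not apply directly on all of $x\ge 0$ (resp.\ $x\le 0$). The standard fix, which is an actual step of the proof in \cite[Appendix~C]{MZ2}, is to run the contraction only on $|x|\ge M$ with $M$ so large that the weighted tail integrals of $\mathbb{A}-\mathbb{A}_\pm$ are small, and then to extend $P_\pm$ to the remaining compact interval by solving the linear ODE $P'=\mathbb{A}P-P\mathbb{A}_\pm$ with data at $|x|=M$. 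Second, your argument for (i) --- uniform invertibility of $I+\Theta_\pm$ via a Neumann series ``after possibly shrinking $r$'' --- fails: shrinking the radius in $\lambda$ has no effect on the size of $\Theta_\pm(x,\lambda)$ at moderate $x$, and the fixed-point bound only makes $\Theta_\pm$ small for $|x|$ large. Invertibility and uniform bounds on the compact part instead come from the extension just described, since there $P_\pm(x)=\Phi(x)\Phi(\pm M)^{-1}P_\pm(\pm M)\Psi_\pm(\pm M)\Psi_\pm(x)^{-1}$ is a product of invertible matrices ($\Phi$, $\Psi_\pm$ fundamental solutions of \eqref{55} and \eqref{A.2}), or equivalently from the Liouville identity $(\det P_\pm)'=(\operatorname{tr}\mathbb{A}-\operatorname{tr}\mathbb{A}_\pm)\det P_\pm$ together with $\det P_\pm\to 1$ at infinity. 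A smaller omission: for $\Theta_\pm$ to be analytic in $\lambda$ you must also choose $r$ so that the spectral splitting of $\mathrm{ad}_{\mathbb{A}_\pm(\lambda)}$ used to fix the directions of integration persists on $B(\lambda_0,r)$ (no eigenvalue crosses the separating contour); otherwise the projections entering your iteration need not depend analytically on $\lambda$.
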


We next introduce the domain of consistent splitting, following \cite{AGJ}.

\begin{definition} \label{DCSdef}
The domain of consistent splitting for \eqref{55} is defined as the (open) set of $\lambda$ such that the limiting matrices $\mathbb{A}_\pm$ are hyperbolic, and the dimensions of their stable subspaces $S_+$ and $S_-$ (equivalently, of unstable subspaces $U_+$ and $U_-$) agree.
\end{definition}

\section{Solvability of the linearized Poisson equation} \label{App_B}

In this appendix, we prove solvability of the linearized Poisson equation. This allows to define the solution operator $\mathcal{A}^{-1}\mathcal{B}$ in \eqref{phisol}.

\begin{lemma} \label{theorem:6.1.1}
Let $(\bar{v},\bar{u},\bar{\phi})$ be the shock profiles described in Lemma~\ref{SP}. Given function $v \in C^1(\mathbb{R}) \cap H^1(\mathbb{R})$, there exists a unique solution $\phi \in C^2(\mathbb{R}) \cap H^2(\mathbb{R})$ of the linearized Poisson equation \eqref{LNSP3}.
\end{lemma}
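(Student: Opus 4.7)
The plan is to recast \eqref{LNSP3} in the abstract form $\mathcal{A}\phi = \mathcal{B}v =: f$, observe that $\mathcal{A}$ is in symmetric divergence form and uniformly elliptic on $\mathbb{R}$, and then apply Lax--Milgram together with a straightforward bootstrap.

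First, I would rewrite \eqref{LNSP3} by distributing the outer $x$-derivative on the left-hand side and collecting $\phi$-terms on the left, producing the equivalent formulation
\begin{equation*}
-\varepsilon^{2}\left(\frac{\phi_x}{\bar v}\right)_x+\bar v e^{\bar\phi}\,\phi \;=\; f,\qquad f:=\mathcal B v,
\end{equation*}
with $\mathcal B$ as in \eqref{calAB}. Since $v\in H^{1}(\mathbb R)$ and every coefficient appearing in $\mathcal B$ is bounded (with $\bar\phi_x$ and $\bar\phi_{xx}$ decaying exponentially by Lemma~\ref{SP}), one has $f\in L^{2}(\mathbb R)$ with $\|f\|_{L^{2}}\le C\|v\|_{H^{1}}$.

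Next, I would introduce the bilinear form
\begin{equation*}
a(\phi,\psi):=\int_{\mathbb R}\left(\frac{\varepsilon^{2}}{\bar v}\,\phi_x\psi_x+\bar v e^{\bar\phi}\,\phi\psi\right)dx
\end{equation*}
on $H^{1}(\mathbb R)$. The divergence structure above guarantees that $a(\phi,\psi)=\langle\mathcal A\phi,\psi\rangle$ distributionally; in particular, the first-order coefficient $\varepsilon^{2}\bar v_x/\bar v^{2}$ that appears in the non-divergence expression \eqref{calAB} for $\mathcal A$ is precisely what results from differentiating $\varepsilon^{2}/\bar v$ under integration by parts, so no uncontrolled drift term survives. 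Because $\bar v$ is uniformly bounded above and below by Lemma~\ref{SP}, and because the zero-order coefficient $\bar v e^{\bar\phi}$ is uniformly bounded below (its far-field limits being $v_\pm e^{\phi_\pm}=1$ by the quasi-neutral relation \eqref{ffcond}), the form $a$ is bounded and coercive on $H^{1}(\mathbb R)$.

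Lax--Milgram then yields a unique weak solution $\phi\in H^{1}(\mathbb R)$ with $a(\phi,\psi)=\langle f,\psi\rangle_{L^{2}}$ for all $\psi\in H^{1}(\mathbb R)$ and $\|\phi\|_{H^{1}}\le C\|f\|_{L^{2}}$. Uniqueness extends to $H^{2}$ trivially, since any $H^{2}$-solution of the homogeneous equation satisfies $a(\phi,\phi)=0$ and hence $\phi\equiv 0$ by coercivity. For the bootstrap, solving algebraically for the second derivative gives
\begin{equation*}
\phi_{xx}=\frac{\bar v}{\varepsilon^{2}}\left(\bar v e^{\bar\phi}\,\phi+\frac{\varepsilon^{2}\bar v_x}{\bar v^{2}}\,\phi_x-f\right),
\end{equation*}
so $\phi_{xx}\in L^{2}(\mathbb R)$ and therefore $\phi\in H^{2}(\mathbb R)$. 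By Sobolev embedding $H^{2}(\mathbb R)\hookrightarrow C^{1}(\mathbb R)$, so $\phi,\phi_x$ are continuous; combined with $v\in C^{1}(\mathbb R)$ (which renders $f$ continuous) and the smoothness of the profile coefficients, the displayed identity shows $\phi_{xx}\in C(\mathbb R)$, yielding $\phi\in C^{2}(\mathbb R)$. No step looks to be a serious obstacle; the only nontrivial point is the cancellation that makes $\mathcal A$ symmetric and coercive on $H^{1}$, and this is immediate once the original divergence form of \eqref{LNSP3} is exploited.
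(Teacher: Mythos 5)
Your proposal is correct and follows essentially the same route as the paper: both rewrite \eqref{LNSP3} as $\mathcal A\phi=\mathcal B v$, introduce the same symmetric bilinear form $a(\phi,\psi)=\int_{\mathbb R}\bigl(\tfrac{\varepsilon^{2}}{\bar v}\phi_x\psi_x+\bar v e^{\bar\phi}\phi\psi\bigr)dx$, verify boundedness and coercivity from the uniform profile bounds of Lemma~\ref{SP}, and apply Lax--Milgram. Your bootstrap to $H^2\cap C^2$ (solving algebraically for $\phi_{xx}$ and using Sobolev embedding) is just a spelled-out version of the regularity step the paper states tersely at the end; the only cosmetic difference is that the paper cites the explicit pointwise bounds $v_-<\bar v<v_+$ and $-\ln v_+<\bar\phi<-\ln v_-$ for coercivity rather than appealing to far-field limits.
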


\begin{proof}
The linearized Poisson equation \eqref{LNSP3} can be rewritten as
\begin{equation} \label{5.1}
\left(\bar{v} e^{\bar{\phi}} + \frac{\varepsilon^2 \bar{v}_x}{\bar{v}^2}\partial_x -\frac{\varepsilon^2}{\bar{v}}\partial_{xx}\right)\phi = \left(-e^{\bar{\phi}} -\frac{\varepsilon^2 \bar{\phi}_{xx}}{\bar{v}^2} -\frac{\varepsilon^2 \bar{\phi}_x}{\bar{v}^2}\partial_x +\frac{2\varepsilon^2 \bar{\phi}_x \bar{v}_x}{\bar{v}^3} \right)v.
\end{equation}
Let $f$ be the right-hand side of \eqref{5.1}. Then, for $v \in C^1 \cap H^1$, the function $f$ belongs to $C \cap L^2$ by the regularity of $v$ and boundedness of the shock profile $(\bar{v},\bar{u},\bar{\phi})$ and its derivatives.

To apply the Lax--Milgram Theorem, we define the bilinear form $B[w,z]: H^1(\mathbb{R}) \times H^1(\mathbb{R}) \to \mathbb{R}$ and the linear functional $ \ell:H^1(\mathbb R)\to\mathbb R$ as
\begin{equation*}
B[w,z] = \int_\mathbb{R} \left( \bar{v}e^{\bar{\phi}} wz + \frac{\varepsilon^2}{\bar{v}}w_x z_x \right) \, dx \quad \text{and} \quad \ell(\psi)=\int_{\mathbb R} f\psi\,dx.
\end{equation*}
By the Cauchy--Schwarz inequality, we obtain the bound on $B[w,z]$:
\begin{equation} \label{boundB1}
\begin{split}
| B[w,z]| \leq C \int_\mathbb{R} \left( |w||z| + |w_x||z_x| \right) \, dx \leq C \lVert w \rVert_{H^1} \lVert z \rVert_{H^1}.
\end{split}
\end{equation}
Moreover,
\begin{equation} \label{boundB2}
\begin{split}
B[w,w] = \int_\mathbb{R} \left( \bar{v} e^{\bar{\phi}} w^2 + \frac{\varepsilon^2}{\bar{v}} w_x^2 \right) \, dx \geq \frac{v_-}{v_+} \lVert w \rVert_{L^2}^2 + \frac{\varepsilon^2}{v_+} \lVert w_x \rVert_{L^2}^2 \geq c \lVert w \rVert_{H^1}^2
\end{split}
\end{equation}
for some constant $c>0$, where in the first inequality we used
\begin{equation} \label{vphibd}
v_- < \bar{v}(x) < v_+, \quad  - \ln v_+ < \bar{\phi}(x) < - \ln v_- \quad \text{for} \ x \in \mathbb{R}.
\end{equation}
Lastly, the linear functional $\ell$ is bounded on $H^1(\mathbb{R})$. Indeed, by the Cauchy--Schwarz inequality, $|\ell(\psi)|\le \|f\|_{L^2}\|\psi\|_{L^2}\le \|f\|_{L^2}\|\psi\|_{H^1}$. Therefore, together with the bounds \eqref{boundB1} and \eqref{boundB2}, the Lax--Milgram Theorem gives existence and uniqueness of a function $\phi \in H^1$ satisfying
\begin{equation}\label{Biform}
B[\phi,\psi]=\int_{\mathbb R} f\psi\,dx \quad \text{for all} \ \psi\in H^1(\mathbb R).
\end{equation}
This implies that the unique weak solution $\phi \in H^1$ of \eqref{Biform} solves \eqref{5.1} in the distributional sense. Moreover, $\phi \in C^2 \cap H^2$ follows from \eqref{5.1}, together with the regularity of $v,\bar{v},\bar{\phi}$ and the bound \eqref{vphibd}.
\end{proof}

\section{Domain and closedness of the linearized operator} \label{App_closed}

We show that the linearized operator $L$ defined by \eqref{Ldef}--\eqref{ABDE} is closed and densely defined on $L^2(\mathbb{R}) \times L^2(\mathbb{R})$. We define the domain $\mathcal{H}$ of the operator $L$ as $\mathcal{H} :=\{ U =(v,u)^{\mathrm{tr}}: v \in H^1(\mathbb{R}), u \in H^2(\mathbb{R}) \}$.

\begin{lemma}
There exists a constant $C>0$ such that
\begin{equation} \label{2.1}
\lVert v \rVert_{H^1(\mathbb{R})} + \lVert u \rVert_{H^2(\mathbb{R})} \leq C \left( \lVert LU \rVert_{L^2(\mathbb{R})} +\lVert U \rVert_{L^2(\mathbb{R})} \right)
\end{equation}
for all $U \in \mathcal{H}$.
\end{lemma}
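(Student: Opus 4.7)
The plan is to derive \eqref{2.1} by treating the second component of $LU=F$ as an elliptic identity for $u_{xx}$, treating the first component as an algebraic relation between $v_x$ and $u_x$, and closing the resulting coupled estimate via a one-dimensional interpolation inequality. A preliminary observation that simplifies the bookkeeping is that, although $L$ is written in divergence form in \eqref{Ldef}, its action on any $U\in\mathcal H$ coincides with that of the non-divergence formulation in \eqref{LNSP2'}, where only $\phi_x$ (and not $\phi_{xx}$ or $\phi_{xxx}$) appears. I would use this equivalent form throughout, since for $v\in H^1$ the nonlocal potential $\phi=\mathcal A^{-1}\mathcal B v$ a priori only lies in $H^2$, and the divergence form would formally involve $\phi_{xxx}$.

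First, from the first component one reads off $F_1=sv_x+u_x$, so that $\|v_x\|_{L^2}\leq C(\|F_1\|_{L^2}+\|u_x\|_{L^2})$ since $s\neq 0$. Next, in the non-divergence form of the second component, the highest-order term in $u$ is $\nu u_{xx}/\bar v$, whose coefficient is uniformly positive by Lemma~\ref{SP}. Solving algebraically for $u_{xx}$ expresses it pointwise in terms of $F_2$ and the first-order quantities $u_x$, $v_x$, $v$, $\phi_x$ with smooth bounded coefficients by \eqref{spbound}. Taking $L^2$ norms and invoking the bound $\|\phi\|_{H^2}\leq C\|v\|_{H^1}$ for $\mathcal A^{-1}\mathcal B$ yields
\begin{equation*}
\|u_{xx}\|_{L^2}\leq C\bigl(\|F\|_{L^2}+\|U\|_{L^2}+\|u_x\|_{L^2}+\|v_x\|_{L^2}\bigr).
\end{equation*}

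To close this coupled inequality I plan to use the standard one-dimensional interpolation $\|u_x\|_{L^2}\leq\eta\|u_{xx}\|_{L^2}+C_\eta\|u\|_{L^2}$ on $\mathbb R$, together with the bound on $\|v_x\|_{L^2}$ in terms of $\|u_x\|_{L^2}$ derived above. Substituting both into the displayed estimate and choosing $\eta>0$ small enough permits an absorption of $\|u_{xx}\|_{L^2}$ from the right-hand side, giving $\|u_{xx}\|_{L^2}\leq C(\|F\|_{L^2}+\|U\|_{L^2})$. The controls on $\|u_x\|_{L^2}$ and $\|v_x\|_{L^2}$ then follow, and combining these with $\|U\|_{L^2}$ yields \eqref{2.1}.

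The main obstacle I anticipate is the careful verification of the nonlocal bound $\|\phi\|_{H^2}\leq C\|v\|_{H^1}$ with a constant depending only on the profile. The $H^1$ half is essentially the Lax--Milgram argument already carried out in Lemma~\ref{theorem:6.1.1} via the coercivity \eqref{boundB2}, but the $H^2$ upgrade requires solving \eqref{5.1} for $\phi_{xx}$ and estimating the resulting linear combination of $\phi$, $\phi_x$, $v$, $v_x$ using \eqref{vphibd} and the exponential decay of the profile derivatives from \eqref{spbound}. Once this $H^1\to H^2$ continuity of $\mathcal A^{-1}\mathcal B$ is in hand, the remainder of the argument reduces to a routine elliptic bootstrap.
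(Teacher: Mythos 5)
Your proposal is correct, and at the conceptual level it follows the same path as the paper: pass to the non-divergence momentum equation \eqref{LNSP2'} so that only $\phi_x$ (not $\phi_{xx},\phi_{xxx}$) appears, isolate $u_{xx}$ using the uniform positivity of $\nu/\bar v$, bound $v_x$ via the first equation $f_1=sv_x+u_x$, and absorb. The bookkeeping differs: you obtain $\|v_x\|_{L^2}\leq C(\|f_1\|_{L^2}+\|u_x\|_{L^2})$ directly from algebra (since $s\neq0$) and then close with the one-dimensional interpolation $\|u_x\|_{L^2}\leq\eta\|u_{xx}\|_{L^2}+C_\eta\|u\|_{L^2}$, whereas the paper takes $L^2$-inner products of the first equation against $sv_x$ and $u_x$, integrating by parts so that $u_{xx}$ enters with a small Young parameter, and then tests the second equation against $\nu u_{xx}$; both closures succeed. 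One simplification worth noting: you treat $\|\phi\|_{H^2}\leq C\|v\|_{H^1}$ as the key nonlocal ingredient and anticipate it as the main obstacle, but since only $\phi_x$ appears in the non-divergence form, the immediate Lax--Milgram consequence $\|\phi\|_{H^1}\leq C\|\mathcal Bv\|_{L^2}\leq C\|v\|_{H^1}$ already suffices, with the resulting $\|v_x\|_{L^2}$ absorbed exactly as the direct $v_x$ terms are; the $H^2$ upgrade is unnecessary. The paper instead invokes the sharper $\delta_S$-dependent bound $\|\phi\|_{H^1}\leq C\|v\|_{L^2}$ from \eqref{phi_est}, which is likewise more than is needed here. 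Your preliminary remark on interpreting $L$ on $\mathcal H$ via the non-divergence form is the right reading and matches how the paper's proof proceeds.
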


\begin{proof}
Let $f=(f_1,f_2)^{\mathrm{tr}}:=LU$. Consider the non-divergence form of the eigenvalue equations associated with \eqref{LNSP1} and \eqref{LNSP2'}. Then,
\begin{subequations}
\begin{align}
& \label{2.4a} sv_x +u_x = f_1, \\
& \label{2.4b} su_x + T \left( \frac{v}{\bar{v}^2} \right)_x + \nu \left( \frac{u_x}{\bar{v}} - \frac{\bar{u}_x}{\bar{v}^2}v \right)_x - \left( \frac{\phi_x}{\bar{v}} - \frac{\bar{\phi}_x}{\bar{v}^2}v \right) = f_2.
\end{align}
\end{subequations}
Taking the $L^2$-inner product of \eqref{2.4a} with $sv_x$ and applying Young's inequality, we have
\begin{equation*}
\int_\mathbb{R} |sv_x|^2 \, dx = \int_\mathbb{R} sv_x f_1 \, dx - \int_\mathbb{R} sv_x u_x \, dx  \leq \eta \int_\mathbb{R} \left( |sv_x|^2 + |u_{xx}|^2 \right) \, dx + \frac{C}{\eta} \int_\mathbb{R} \left( |f_1|^2 + |v|^2 \right) \, dx,
\end{equation*}
where $0<\eta<1$ is an arbitrarily small generic constant. Hence,
\begin{equation} \label{tmp11}
\int_\mathbb{R} |v_x|^2 \, dx \leq \eta \int_\mathbb{R} |u_{xx}|^2 \, dx + \frac{C}{\eta} \int_\mathbb{R} |v|^2 \, dx + C \int_\mathbb{R} |f_1|^2 \, dx.
\end{equation}
Similarly, one may obtain by taking the $L^2$-inner product of \eqref{2.4a} with $u_x$ that
\begin{equation} \label{tmp12}
\int_\mathbb{R} |u_x|^2 \, dx \leq C \int_\mathbb{R} \left( |v_x|^2 + |f_1|^2 \right) \, dx \leq \eta \int_\mathbb{R} |u_{xx}|^2 \, dx + \frac{C}{\eta} \int_\mathbb{R} |v|^2 \, dx + C \int_\mathbb{R} |f_1|^2 \, dx,
\end{equation}
where in the second inequality, we used \eqref{tmp11}. We take the $L^2$-inner product of \eqref{2.4b} with $\textstyle\nu u_{xx}$:
\begin{equation*}
\begin{split}
\int_\mathbb{R} \frac{|\nu u_{xx}|^2}{\bar{v}} \, dx &=  - \int_\mathbb{R} \left( \frac{\nu T}{\bar{v}^2} u_{xx}v_x - \frac{\nu T \bar{v}_x }{\bar{v}^3} u_{xx} v \right) \, dx \\
& \quad + \int_\mathbb{R} \left( \frac{\nu^2\bar{v}_x}{\bar{v}^2} u_{xx}u_x + \nu \left(\frac{\nu\bar{u}_x}{\bar{v}}\right)_x u_{xx}v + \frac{\nu^2 \bar{u}_x}{\bar{v}^2} u_{xx}v_x \right) \, dx \\
& \quad - \int_\mathbb{R} \left( \frac{\nu\bar{\phi}_x}{\bar{v}^2} u_{xx}v - \frac{\nu}{\bar{v}} u_{xx} \phi_x \right) \, dx + \int_\mathbb{R} \nu u_{xx} f_2 \, dx.
\end{split}
\end{equation*}
By Young's inequality and the bounds $v_- < \bar{v} < v_+$,
\begin{equation*}
\begin{split}
\int_\mathbb{R} |u_{xx}|^2 \, dx & \leq \eta \int_\mathbb{R} |u_{xx}|^2 \, dx + \frac{C}{\eta} \int_\mathbb{R} \left( |v_x|^2 + |v|^2 + |u_x|^2 + |\phi_x|^2 + |f_2|^2 \right) \, dx,
\end{split}
\end{equation*}
and, taking $\eta$ small enough and using \eqref{tmp11}--\eqref{tmp12},
\begin{equation} \label{tmp13}
\begin{split}
\int_\mathbb{R} |u_{xx}|^2 \, dx & \leq C \int_\mathbb{R} \left( |v_x|^2 + |v|^2 + |u_x|^2 + |\phi_x|^2 + |f_2|^2 \right) \, dx \\
& \leq \eta \int_\mathbb{R} |u_{xx}|^2 \, dx + \frac{C}{\eta} \int_\mathbb{R} |v|^2 \, dx + C \int_\mathbb{R} \left( |v|^2 + |\phi_x|^2 + |f_1|^2 + |f_2|^2 \right) \, dx.
\end{split}
\end{equation}
Adding \eqref{tmp11}--\eqref{tmp13} and choosing $\eta$ sufficiently small, we arrive at
\begin{equation*}
\lVert v_x \rVert_{L^2}^2 + \lVert u_x \rVert_{L^2}^2 + \lVert u_{xx} \rVert_{L^2}^2 \leq C \left( \lVert v \rVert_{L^2}^2 + \lVert f_1 \rVert_{L^2}^2 \right),
\end{equation*}
which implies \eqref{2.1}. Here we used $\lVert \phi \rVert_{H^1}^2 \leq C \lVert v \rVert_{L^2}^2$ which follows from an estimate of \eqref{LNSP3}; cf. \eqref{phi_est} in the proof of Lemma~\ref{Lemma:3.2}.
\end{proof}

\begin{proposition} \label{Prop:2.2}
The linear operator $L$ is closed and densely defined on $L^2$ with domain $\mathcal{H}$.
\end{proposition}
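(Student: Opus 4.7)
The plan is to deduce both properties from the coercive estimate (2.1) that was just established. Density is essentially free: since $C_c^\infty(\mathbb{R}) \times C_c^\infty(\mathbb{R})$ is contained in $\mathcal{H}$ and is dense in $L^2(\mathbb{R}) \times L^2(\mathbb{R})$, the domain $\mathcal{H}$ is automatically dense.

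For closedness, I would take a sequence $\{U_n\} \subset \mathcal{H}$ with $U_n \to U$ in $L^2(\mathbb{R}) \times L^2(\mathbb{R})$ and $LU_n \to F$ in $L^2(\mathbb{R}) \times L^2(\mathbb{R})$, and apply (2.1) to the difference $U_n - U_m$ to obtain
$$\|v_n - v_m\|_{H^1} + \|u_n - u_m\|_{H^2} \leq C\bigl(\|LU_n - LU_m\|_{L^2} + \|U_n - U_m\|_{L^2}\bigr).$$
Both terms on the right tend to zero, so the sequence is Cauchy in $H^1(\mathbb{R}) \times H^2(\mathbb{R})$. Completeness then yields a limit $U^\ast \in \mathcal{H}$, and by uniqueness of the $L^2$-limit $U^\ast = U$; in particular $U \in \mathcal{H}$. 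To conclude $LU = F$, I would invoke continuity of $L : \mathcal{H} \to L^2(\mathbb{R}) \times L^2(\mathbb{R})$: the local part of $L$ involves bounded smooth coefficients together with derivatives up to order two of $u$ and order one of $v$, so it is clearly continuous, while for the nonlocal part Lemma~\ref{theorem:6.1.1} gives $\|\mathcal{A}^{-1}\mathcal{B}v\|_{H^2} \leq C\|v\|_{H^1}$. This ensures $LU_n \to LU$ in $L^2(\mathbb{R}) \times L^2(\mathbb{R})$, which combined with $LU_n \to F$ forces $LU = F$.

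The main obstacle is entirely captured by the estimate (2.1), already proved in the preceding lemma; once that is in hand, the argument above is routine. The only mildly delicate point is ensuring continuity of $L$ on $\mathcal{H}$, which uses the reduction of the formally third-order Poisson contribution in \eqref{Ldef} to a first-order one via the linearized Poisson equation (as noted in Section~\ref{sec:lin} in the equivalence between \eqref{LNSP2} and \eqref{LNSP2'}), so that the output stays in $L^2$ for $U \in \mathcal{H}$.
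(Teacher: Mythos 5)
Your proposal is correct and follows essentially the same route as the paper: use the coercive estimate \eqref{2.1} to show a Cauchy sequence in $\mathcal{H}$, pass to the limit in $\mathcal{H}$ by completeness, identify the limit via uniqueness in $L^2$, and conclude $LU=F$ using continuity of $L:\mathcal{H}\to L^2$. Where the paper simply asserts that continuity, you add a useful justification — namely that the formally third-order Poisson contribution in \eqref{Ldef} reduces to a first-order one by substituting the linearized Poisson equation (passing to the non-divergence form \eqref{LNSP2'}), so that $\|\mathcal{A}^{-1}\mathcal{B}v\|_{H^2}\le C\|v\|_{H^1}$ from Lemma~\ref{theorem:6.1.1} is more than enough to land in $L^2$; this is exactly the subtlety worth pointing out, and your explanation is consistent with how the paper's closedness estimate is actually derived (via \eqref{2.4a}--\eqref{2.4b}).
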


\begin{proof}
To prove closedness, we consider a sequence $\{U_k\}_{k=1}^\infty = \{ (v_k,u_k)^{\mathrm{tr}} \}_{k=1}^\infty \subset\mathcal{H}$ satisfying
\begin{equation*}
U_k \to U = (v,u)^{\mathrm{tr}} \quad \text{and} \quad LU_k \to F = (f,g)^{\mathrm{tr}} \quad \text{as} \quad k \to \infty
\end{equation*}
in $L^2$. By \eqref{2.1}, we have
\begin{equation*}
\lVert v_k -v_{l} \rVert_{H^1} + \lVert u_k - u_l \rVert_{H^2} \leq C \left( \lVert L (U_k-U_l) \rVert_{L^2} + \lVert U_k - U_l \rVert_{L^2} \right)
\end{equation*}
for all $k$ and $l$. Hence $\{U_k\}$ is a Cauchy sequence in $\mathcal{H}$. Since $\mathcal H$ is complete, there exists $\widetilde U=(\tilde v,\tilde u)^{\mathrm{tr}}\in\mathcal H$ such that $U_k\to \widetilde U$ in $\mathcal H$. In particular, $U_k\to \widetilde U$ in $L^2$, and by uniqueness of the limit in $L^2$ we have $\widetilde U=U$. Thus $U\in\mathcal H$. Moreover, since $U_k\to U$ in $\mathcal H$ and $L:\mathcal H\to L^2$ is continuous, we obtain $LU_k\to LU$ in $L^2$. Comparing with $LU_k\to F$ in $L^2$ yields $F=LU$. Therefore $L$ is closed, and it is densely defined in $L^2$ since $\mathcal{H}$ is dense in $L^2$.
\end{proof}

\section{Higher-order energy estimates} \label{App_C}

Differentiating \eqref{3.3a}--\eqref{3.3b} yields the first two equations of the eigenvalue equation \eqref{EE''} for $(v,u) = (\Phi_x,\Psi_x)$, while \eqref{3.3c} coincides with the third equation of \eqref{EE''}. For the higher-order energy estimates, it is convenient to work with the following non-divergence form of the eigenvalue problem associated with \eqref{LNSP1} and \eqref{LNSP2'}:
\begin{subequations} \label{3.32}
\begin{align}
& \label{3.32a} \lambda v - s v_x - u_x = 0, \\
& \label{3.32b} \lambda u - s u_x - T\Big(\frac{v}{\hat{v}^2}\Big)_x = \nu\Big(\frac{1}{\hat{v}}u_x - \frac{\hat{u}_x}{\hat{v}^2}v\Big)_x - \Big(\frac{1}{\hat{v}}\phi_x - \frac{\hat{\phi}_x}{\hat{v}^2}v\Big).
\end{align}
\end{subequations}
This Appendix provides estimates for $|u|,|u_x|$, and $|v_x|$, which used in the proof of Lemma~\ref{Lemma:3.2}.

\begin{lemma}
Under the assumptions in Lemma~\ref{Lemma:3.2}, there exists a constant $C>0$ such that
\begin{equation} \label{uL2}
\operatorname{Re}{\lambda} \lVert u \rVert_{L^2}^2 + \lVert u_x \rVert_{L^2}^2 \leq C \delta_S \lVert v_x \rVert_{L^2}^2 + C \left( \lVert v \rVert_{L^2}^2 + \lVert u \rVert_{L^2}^2 + \lVert \phi_x \rVert_{L^2}^2 \right).
\end{equation}
\end{lemma}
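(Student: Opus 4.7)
The plan is to test the non-divergence momentum equation \eqref{3.32b} against $\bar{u}$, integrate over $\mathbb{R}$, and take the real part. The transport term $-s\int u_x\bar u\,dx$ drops out, since its real part equals $-\tfrac{s}{2}\int(|u|^2)_x\,dx=0$ (using $u \in H^1$). An integration by parts on the viscous contribution produces the dissipation $-\nu\int|u_x|^2/\hat v\,dx$ together with a cross-term $\nu\int\hat u_x v\,\overline{u_x}/\hat v^2\,dx$. After moving the dissipation to the left-hand side, the identity takes the form
\begin{equation*}
\operatorname{Re}\lambda\int_{\mathbb R}|u|^2\,dx+\nu\int_{\mathbb R}\frac{|u_x|^2}{\hat v}\,dx=\operatorname{Re}\!\int_{\mathbb R}\!\left[\frac{Tv\,\overline{u_x}}{\hat v^2}+\frac{\nu\hat u_x v\,\overline{u_x}}{\hat v^2}-\left(\frac{\phi_x}{\hat v}-\frac{\hat\phi_x v}{\hat v^2}\right)\bar u\right]dx,
\end{equation*}
where the first summand comes from the pressure term $-T\int(v/\hat v^2)_x\bar u\,dx$ after the manipulation described below.

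The only delicate point is the pressure term: a direct application of Cauchy--Schwarz would produce $\lVert v_x\rVert_{L^2}^2$ with an $O(1)$ coefficient, which cannot be absorbed. I would instead expand $(v/\hat v^2)_x=v_x/\hat v^2-2v\hat v_x/\hat v^3$ and integrate by parts on the $v_x$ piece, shifting the derivative onto $\bar u$; the $\hat v_x$ cross-terms produced by $(\bar u/\hat v^2)_x$ cancel exactly with the $2Tv\hat v_x\bar u/\hat v^3$ contribution, leaving the clean expression $T\int v\,\overline{u_x}/\hat v^2\,dx$, bounded via Young's inequality by $\eta\lVert u_x\rVert_{L^2}^2+C_\eta\lVert v\rVert_{L^2}^2$.

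The remaining estimates are routine. Using the profile derivative bounds $|\hat u_x|,|\hat\phi_x|\le C\delta_S$ from Lemma~\ref{SP}, Young's inequality controls the viscous cross-term by $C\delta_S(\lVert v\rVert_{L^2}^2+\lVert u_x\rVert_{L^2}^2)$ and the Poisson coupling by $C(\lVert\phi_x\rVert_{L^2}^2+\lVert u\rVert_{L^2}^2)+C\delta_S(\lVert v\rVert_{L^2}^2+\lVert u\rVert_{L^2}^2)$. Since $\nu\int|u_x|^2/\hat v\,dx\ge(\nu/v_+)\lVert u_x\rVert_{L^2}^2$, choosing $\eta$ and $\delta_S$ sufficiently small allows the $\lVert u_x\rVert_{L^2}^2$ contributions on the right to be absorbed into the dissipation on the left, yielding \eqref{uL2}. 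The $C\delta_S\lVert v_x\rVert_{L^2}^2$ slack on the right of \eqref{uL2} is harmless and does not in fact arise in this approach.
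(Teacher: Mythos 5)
Your proof is correct and follows essentially the same route as the paper's: both test \eqref{3.32b} against $\overline{u}$, integrate by parts on the viscous term to extract the dissipation $\nu\int|u_x|^2/\hat v\,dx$, and—crucially—integrate by parts on the pressure term so that $v_x$ is never hit by an $O(1)$ coefficient. Your two-step manipulation of the pressure term (expand $(v/\hat v^2)_x$, integrate by parts on the $v_x$ piece, observe the $\hat v_x$ cross-cancellation) is exactly equivalent to integrating by parts directly on $-T\int(v/\hat v^2)_x\overline u\,dx$, which is what the paper does; the cancellation you describe is automatic. One cosmetic point: the $T$-term in your displayed identity should carry a minus sign, $-T\int v\,\overline{u_x}/\hat v^2\,dx$, but this has no bearing on the estimate since you apply Young's inequality to its modulus. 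Your closing remark is also accurate: the paper's extra integration by parts on the viscous cross-term is what introduces the $\delta_S\|v_x\|_{L^2}^2$ slack, and your version, which bounds $\nu\int\hat u_x v\overline{u_x}/\hat v^2\,dx$ directly, yields a slightly cleaner inequality with that term absent.
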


\begin{proof}
Multiplying \eqref{3.32b} by $\overline{u}$, taking the real part, and integrating over $\mathbb{R}$, we obtain after several applications of integration by parts
\begin{equation*}
\begin{split}
& \operatorname{Re}{\lambda} \int_\mathbb{R} |u|^2 \, dx + \int_\mathbb{R} \frac{\nu}{\hat{v}} |u_x|^2 \, dx \\
& \quad = - \int_\mathbb{R} \frac{T}{\hat{v}^2} \operatorname{Re}{(v\overline{u_x})} \, dx - \int_\mathbb{R} \frac{1}{\hat{v}} \operatorname{Re}{(\phi_x \overline{u})} \, dx \\
& \qquad + \int_\mathbb{R} \bigg( \frac{\hat{\phi}_x}{\hat{v}^2} - \frac{\nu \hat{u}_x}{\hat{v}^2} \bigg) \operatorname{Re}{ ( v \overline{u})} \, dx - \int_\mathbb{R} \nu \operatorname{Re}{ \bigg[ \bigg( \frac{\hat{u}_x}{\hat{v}^2} \bigg)_x v\overline{u} + \frac{\hat{u}_x}{\hat{v}^2} v_x \overline{u} \bigg] } \, dx.
\end{split}
\end{equation*}
Applying Young's inequality, we obtain a bound on the first two terms on the right-hand side:
\begin{equation*}
\begin{split}
& \bigg| - \int_\mathbb{R} \frac{T}{\hat{v}^2} \operatorname{Re}{(v \overline{u}_x)} \, dx - \int_\mathbb{R} \frac{1}{\hat{v}} \operatorname{Re}{(\phi_x \overline{u})} \, dx \bigg| \\
& \quad \leq \eta \left( \lVert u \rVert_{L^2}^2 + \lVert u_x \rVert_{L^2}^2 \right) + \frac{C}{\eta} \left( \|v\|_{L^2}^2 + \|\phi_x\|_{L^2}^2 \right).
\end{split}
\end{equation*}
Thanks to \eqref{spbound}, the remaining terms are bounded by
\begin{equation*}
C \delta_S \left( \|v\|_{H^1}^2 + \|u\|_{L^2}^2 \right).
\end{equation*}
Combining the estimates and taking $\eta$ sufficiently small, we arrive at the desired inequality.
\end{proof}

\begin{lemma}
Under the assumptions in Lemma~\ref{Lemma:3.2}, there exists a constant $C>0$ such that
\begin{equation} \label{uxL2}
\operatorname{Re}{\lambda} \lVert u_x \rVert_{L^2}^2 + \lVert u_{xx} \rVert_{L^2}^2 \leq C \delta_S \left( \lVert v \rVert_{L^2}^2 + \lVert u \rVert_{L^2}^2 \right) + C \left( \lVert v_x \rVert_{L^2}^2 + \lVert \phi_x \rVert_{L^2}^2 \right).
\end{equation}
\end{lemma}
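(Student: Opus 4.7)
The plan is to multiply the non-divergence momentum equation \eqref{3.32b} by $-\overline{u_{xx}}$, take the real part, and integrate over $\mathbb{R}$. This choice of test function extracts simultaneously the two quantities on the left-hand side of \eqref{uxL2}: the time-like term $\lambda u \cdot (-\overline{u_{xx}})$ produces $\operatorname{Re}\lambda \lVert u_x\rVert_{L^2}^2$ after one integration by parts, while the principal part of the viscosity $\nu(u_x/\hat v)_x$ yields the dissipation $\int (\nu/\hat v)|u_{xx}|^2\,dx$.

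The remaining terms are handled as follows. The transport term $s u_x \overline{u_{xx}}$ vanishes after taking the real part since $\operatorname{Re}(u_x\overline{u_{xx}})=\tfrac12(|u_x|^2)_x$. The pressure $T(v/\hat v^2)_x \overline{u_{xx}}$ splits into a part with $v_x$ and a profile contribution with $\hat v_x v$; by Young's inequality together with \eqref{spbound}, these are bounded by $\eta\lVert u_{xx}\rVert_{L^2}^2+(C/\eta)(\lVert v_x\rVert_{L^2}^2+\delta_S\lVert v\rVert_{L^2}^2)$. Expanding the viscosity as
\begin{equation*}
-\nu\int\bigg(\frac{u_x}{\hat v}-\frac{\hat u_x}{\hat v^2}v\bigg)_x\overline{u_{xx}}\,dx=-\int\frac{\nu}{\hat v}|u_{xx}|^2\,dx+(\text{remainder}),
\end{equation*}
the remainder carries coefficients $\hat v_x,\hat u_x,\hat u_{xx}$, each $\mathcal O(\delta_S)$ by \eqref{spbound}, and is therefore bounded by $\eta\lVert u_{xx}\rVert_{L^2}^2+C\delta_S(\lVert u_x\rVert_{L^2}^2+\lVert v_x\rVert_{L^2}^2+\lVert v\rVert_{L^2}^2)$. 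Finally, the electric forcing $(\phi_x/\hat v-\hat\phi_x v/\hat v^2)\overline{u_{xx}}$ contributes at most $\eta\lVert u_{xx}\rVert_{L^2}^2+(C/\eta)(\lVert\phi_x\rVert_{L^2}^2+\delta_S\lVert v\rVert_{L^2}^2)$.

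Collecting everything and choosing $\eta$ small enough to absorb $\eta\lVert u_{xx}\rVert_{L^2}^2$ into the dissipation on the left, we arrive at
\begin{equation*}
\operatorname{Re}\lambda\lVert u_x\rVert_{L^2}^2+\lVert u_{xx}\rVert_{L^2}^2\le C\delta_S\lVert u_x\rVert_{L^2}^2+C\bigl(\lVert v_x\rVert_{L^2}^2+\lVert\phi_x\rVert_{L^2}^2\bigr)+C\delta_S\lVert v\rVert_{L^2}^2.
\end{equation*}
The residual $C\delta_S\lVert u_x\rVert_{L^2}^2$ on the right is then eliminated by invoking \eqref{uL2}, which (using $\operatorname{Re}\lambda\ge 0$) gives $\lVert u_x\rVert_{L^2}^2 \le C\delta_S\lVert v_x\rVert_{L^2}^2+C(\lVert v\rVert_{L^2}^2+\lVert u\rVert_{L^2}^2+\lVert\phi_x\rVert_{L^2}^2)$. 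This substitution produces exactly the $C\delta_S\lVert u\rVert_{L^2}^2$ term on the right-hand side of \eqref{uxL2}.

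The main technical obstacle will be the careful bookkeeping of the commutator arising in the viscosity expansion: one must verify that the principal dissipation $-\int(\nu/\hat v)|u_{xx}|^2\,dx$ is extracted with the correct sign and that every remainder genuinely carries a factor of $\delta_S$ (or higher) via \eqref{spbound}, since otherwise the final bootstrap through \eqref{uL2} would fail to close. Beyond this, the argument is a one-derivative-higher parallel of the $\lVert u_x\rVert_{L^2}$-estimate \eqref{uL2}.
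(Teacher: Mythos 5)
Your proposal is correct and follows essentially the same route as the paper: multiply \eqref{3.32b} by $-\overline{u_{xx}}$, take the real part, integrate over $\mathbb{R}$, and bound every remainder via Young's inequality together with the profile estimates \eqref{spbound}, then absorb $\eta\lVert u_{xx}\rVert_{L^2}^2$ into the dissipation. Your closing step---bootstrapping the residual $C\delta_S\lVert u_x\rVert_{L^2}^2$ through \eqref{uL2} to convert it into $\delta_S(\lVert v\rVert_{L^2}^2+\lVert u\rVert_{L^2}^2)$ plus $\mathcal O(1)(\lVert v_x\rVert_{L^2}^2+\lVert \phi_x\rVert_{L^2}^2)$---is a slightly more explicit variant of how the paper tacitly absorbs that same term into $C\delta_S\lVert u\rVert_{L^2}^2$, and both closures are valid.
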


\begin{proof}
Multiplying \eqref{3.32b} by $-\overline{u_{xx}}$, taking the real part, and integrating over $\mathbb{R}$, we obtain
\begin{equation*}
\begin{split}
\operatorname{Re}{\lambda} \int_\mathbb{R} |u_x|^2 \, dx + \int_\mathbb{R} \frac{\nu}{\hat{v}} |u_{xx}|^2 \, dx & = - \int_\mathbb{R} T \operatorname{Re}{ \bigg[ \overline{u_{xx}}  \bigg( \frac{1}{\hat{v}^2} v_x - \frac{2\hat{v}_x}{\hat{v}^3} v \bigg) \bigg] } \, dx \\
& \quad + \int_\mathbb{R} \nu \operatorname{Re}{ \bigg[ \overline{u_{xx}} \bigg( \frac{\hat{v}_x}{\hat{v}^2} u_x + \bigg( \frac{\hat{u}_x}{\hat{v}^2} \bigg)_x v + \frac{\hat{u}_x}{\hat{v}^2} v_x \bigg) \bigg] } \, dx \\
& \quad + \int_\mathbb{R} \operatorname{Re}{ \bigg[ \overline{u_{xx}} \bigg(\frac{1}{\hat{v}}\phi_x - \frac{\hat{\phi}_x}{\hat{v}^2} v \bigg) \bigg] } \, dx
\end{split}
\end{equation*}
By Young's inequality and \eqref{spbound}, the right-hand side is bounded by
\begin{equation*}
\eta \lVert u_{xx} \rVert_{L^2}^2 + \frac{C}{\eta} \left( \lVert v_x \rVert_{L^2}^2 + \lVert \phi_x \rVert_{L^2}^2 \right) + C \delta_S \left( \lVert v \rVert_{H^1}^2 + \lVert u \rVert_{L^2}^2 \right).
\end{equation*}
Therefore we obtain \eqref{uxL2}.
\end{proof}

\begin{lemma}
Under the assumptions in Lemma~\ref{Lemma:3.2}, there exists a constant $C>0$ such that
\begin{equation} \label{vxL2}
\operatorname{Re}{\lambda} \lVert v_x \rVert_{L^2}^2 + \lVert v_x \rVert_{L^2}^2 \leq C \left( \lVert v \rVert_{L^2}^2 + \lVert u \rVert_{L^2}^2 + \lVert \phi_x \rVert_{L^2}^2 \right).
\end{equation}
\end{lemma}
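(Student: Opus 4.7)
The plan is to test the momentum equation \eqref{3.32b} against $\overline{v_x}$ in order to extract two positive contributions of $|v_x|^2$: a zeroth-order piece from the pressure gradient and a piece proportional to $\operatorname{Re}\lambda$ from the viscous term. First I multiply \eqref{3.32b} by $\overline{v_x}$, integrate over $\mathbb{R}$, and take real parts. Expanding the pressure term gives
\begin{equation*}
-T\operatorname{Re}\int_\mathbb{R}(v/\hat v^2)_x\overline{v_x}\,dx = -T\int_\mathbb{R} |v_x|^2/\hat v^2\,dx + O(\delta_S\|v\|_{L^2}\|v_x\|_{L^2}),
\end{equation*}
which after rearrangement contributes a lower bound proportional to $\|v_x\|_{L^2}^2$. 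For the $\operatorname{Re}\lambda$-weighted term, I expand $\nu\operatorname{Re}\int(u_x/\hat v - \hat u_x v/\hat v^2)_x\overline{v_x}\,dx$, whose leading part is $\nu\operatorname{Re}\int u_{xx}\overline{v_x}/\hat v\,dx$. Substituting $u_{xx}=\lambda v_x-sv_{xx}$ from the differentiated \eqref{3.32a} and using $2\operatorname{Re}(v_{xx}\overline{v_x}/\hat v)=(|v_x|^2/\hat v)_x+|v_x|^2\hat v_x/\hat v^2$ together with integration by parts reduces the $v_{xx}$-portion to an $O(\delta_S\|v_x\|_{L^2}^2)$ remainder, leaving the desired $\nu\operatorname{Re}\lambda\int_\mathbb{R}|v_x|^2/\hat v\,dx$.

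The main obstacle is the cross term $\operatorname{Re}(\lambda\int u\overline{v_x}\,dx)$, which naively carries an unbounded factor of $\lambda$. The key observation is that integration by parts gives $\int u\overline{v_x}\,dx = -\int u_x\overline{v}\,dx$, while conjugating \eqref{3.32a} and pairing with $u_x$ yields the algebraic identity
\begin{equation*}
\overline{\lambda}\int_\mathbb{R} u_x\overline{v}\,dx = s\int_\mathbb{R} u_x\overline{v_x}\,dx + \|u_x\|_{L^2}^2.
\end{equation*}
Since $|\lambda|=|\overline{\lambda}|$ for $\lambda\in\Omega$, taking absolute values in this identity produces the $\lambda$-independent bound
\begin{equation*}
\Bigl|\lambda\int_\mathbb{R} u\overline{v_x}\,dx\Bigr| \le s\,\|u_x\|_{L^2}\|v_x\|_{L^2} + \|u_x\|_{L^2}^2,
\end{equation*}
which is the crucial step ensuring that no $|\lambda|^2\|v\|_{L^2}^2$ ever appears on the right-hand side.

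The remaining contributions---namely $s\operatorname{Re}\int u_x\overline{v_x}$, $\operatorname{Re}\int(\phi_x/\hat v)\overline{v_x}$, the electric-field profile term $\operatorname{Re}\int(\hat\phi_x/\hat v^2)v\overline{v_x}$, and the various profile-induced error terms carrying powers of $\delta_S$ from \eqref{spbound}---will all be handled by standard Young's inequalities. Collecting everything and absorbing the resulting $(\eta+C\delta_S)\|v_x\|_{L^2}^2$ on the right into the left for $\eta,\delta_S$ sufficiently small yields an intermediate inequality of the form
\begin{equation*}
\operatorname{Re}\lambda\,\|v_x\|_{L^2}^2 + \|v_x\|_{L^2}^2 \le C\bigl(\|u_x\|_{L^2}^2 + \|\phi_x\|_{L^2}^2 + \|v\|_{L^2}^2\bigr),
\end{equation*}
and a final application of the already-established estimate \eqref{uL2} (using $\operatorname{Re}\lambda\ge 0$ to drop $\operatorname{Re}\lambda\|u\|_{L^2}^2$ and choosing $\delta_S$ small) eliminates $\|u_x\|_{L^2}^2$ in favor of $\|v\|_{L^2}^2 + \|u\|_{L^2}^2 + \|\phi_x\|_{L^2}^2$, proving \eqref{vxL2}.
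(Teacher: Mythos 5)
Your proof is correct, and it takes a genuinely different route from the paper's. The paper tests \emph{both} equations of \eqref{3.32}: it multiplies \eqref{3.32a} by $-\nu\overline{v_{xx}}$ and \eqref{3.32b} by $-\hat v\overline{v_x}$ and sums, so the term $\nu\operatorname{Re}\lambda\|v_x\|_{L^2}^2$ comes from integrating $-\nu\operatorname{Re}(\lambda\overline{v_{xx}}v)$ by parts; you instead test only \eqref{3.32b} by $\overline{v_x}$ and generate the same term by substituting $u_{xx}=\lambda v_x - s v_{xx}$ (the differentiated $v$-equation) inside the viscous integral. The more substantive difference is your treatment of the cross term $\operatorname{Re}\big(\lambda\int u\overline{v_x}\big)$: the paper rewrites this via the identity $\operatorname{Re}(a\overline b)+\operatorname{Re}(\overline a\,\overline b)=2\operatorname{Re}(a)\operatorname{Re}(b)$ combined with the conjugated, differentiated $v$-equation, which produces an extra $\operatorname{Re}\lambda\,\|u\|_{L^2}^2$ contribution that must in turn be controlled by \eqref{uL2}; you instead observe that $\int u\overline{v_x}=-\int u_x\overline v$ and invoke the \emph{complex} algebraic identity $\overline\lambda\int u_x\overline v = s\int u_x\overline{v_x}+\|u_x\|_{L^2}^2$ obtained directly from \eqref{3.32a}, then take absolute values. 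This yields a $\lambda$-independent bound in one step and sidesteps the $\operatorname{Re}\lambda\,\|u\|_{L^2}^2$ term entirely; \eqref{uL2} is then needed only to eliminate $\|u_x\|_{L^2}^2$. Your version is arguably tighter bookkeeping, at the cost of relying on the absolute-value maneuver rather than keeping everything as a pointwise real-part computation; both deliver \eqref{vxL2}.
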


\begin{proof}
By multiplying \eqref{3.32a} and \eqref{3.32b} by $-\nu \overline{v_{xx}}$ and $-\hat{v}\overline{v_x}$, respectively, then taking the real part and summing the two equations, 
we obtain
\begin{equation*}
\begin{split}
& - \nu \operatorname{Re}{(\lambda \overline{v_{xx}} v)} - \hat{v} \operatorname{Re}{(\lambda \overline{v_x} u)} + s \nu \operatorname{Re}{(\overline{v_{xx}} v_x)} + \nu \operatorname{Re}({\overline{v_{xx}} u_x)} \\
& \qquad + s \hat{v} \operatorname{Re}({\overline{v_x} u_x)} + T \operatorname{Re}{\bigg[\overline{v_x} \bigg( \frac{1}{\hat{v}} v_x - \frac{2\hat{v}_x}{\hat{v}^2} v \bigg)\bigg] } \\
& \quad = -\nu \operatorname{Re}{ \bigg[ \overline{v_x} \bigg( u_{xx} - \frac{\hat{v}_x}{\hat{v}} u_x - \frac{1}{\hat{v}}\bigg( \frac{\hat{u}_x}{\hat{v}^2} \bigg)_x v - \frac{\hat{u}_x}{\hat{v}} v_x \bigg) \bigg] } +  \operatorname{Re}{\bigg[\overline{v_x} \bigg( \phi_x -\frac{\hat{\phi}_x}{\hat{v}}v \bigg)\bigg]}.
\end{split}
\end{equation*}
We integrate this identity over $\mathbb{R}$ and apply integration by parts, which yields
\begin{equation*}
\begin{split}
\operatorname{Re}{\lambda}\int_\mathbb{R} \nu |v_x|^2 \, dx + \int_\mathbb{R} \frac{T}{\hat{v}} |v_x|^2 \, dx = I + II + III,
\end{split}
\end{equation*}
where
\begin{equation*}
\begin{split}
\RNum{1} & := \int_\mathbb{R} \hat{v} \operatorname{Re}{(\lambda \overline{v_x} u)} \, dx, \\
\RNum{2} & := \int_\mathbb{R} \hat{v} \operatorname{Re}{(\overline{v_{x}}u)} \, dx - \int_\mathbb{R} s \hat{v} \operatorname{Re}{(\overline{v_x}u_x)} \, dx + \int_\mathbb{R} \operatorname{Re}{(\overline{v_x}\phi_x)} \, dx, \\
\RNum{3} & := - \int_\mathbb{R} \bigg( \frac{2T\hat{v}_x}{\hat{v}^2} + \frac{\hat{\phi}_x}{\hat{v}} \bigg) \operatorname{Re}{(\overline{v_x}v)} \, dx + \int_\mathbb{R} \nu \operatorname{Re}{ \bigg[ \overline{v_x} \bigg( \frac{\hat{v}_x}{\hat{v}} u_x + \frac{1}{\hat{v}}\bigg( \frac{\hat{u}_x}{\hat{v}^2} \bigg)_x v + \frac{\hat{u}_x}{\hat{v}} v_x \bigg) \bigg] } \, dx.
\end{split}
\end{equation*}
We estimate the terms $\RNum{1}$, $\RNum{2}$, and $\RNum{3}$. First, we estimate $\RNum{1}$ using \eqref{3.32a}. Taking the complex conjugate of \eqref{3.32a}, differentiating with respect to $x$, multiplying by $\hat{v} u$, taking the real part, and integrating over $\mathbb{R}$, we obtain
\begin{equation*}
\int_\mathbb{R} \hat{v} \operatorname{Re}(\overline{\lambda} \overline{v_x} u) \, dx - \int_\mathbb{R} s \hat{v} \operatorname{Re}(\overline{v_{xx}} u) \, dx - \int_\mathbb{R} \hat{v} \operatorname{Re}(\overline{u_{xx}} u) \, dx = 0.
\end{equation*}
Let $b:=v_x\overline{u}$ so that $\overline{b}=\overline{v_x}u$. Using the identity
$\operatorname{Re}(a\overline{b}) + \operatorname{Re}(\overline{a} \overline{b}) = 2\operatorname{Re}(a) \operatorname{Re}(b)$ with $a=\lambda$,
we have
\begin{equation*}
\begin{split}
\RNum{1} & = \operatorname{Re} \lambda \int_\mathbb{R} 2\hat{v} \operatorname{Re}(v_x \overline{u}) \, dx - \int_\mathbb{R} s \hat{v}\operatorname{Re}(\overline{v_{xx}} u) \, dx - \int_\mathbb{R} \hat{v} \operatorname{Re}(\overline{u_{xx}} u) \, dx \\
& = \operatorname{Re} \lambda \int_\mathbb{R} 2\hat{v} \operatorname{Re}(v_x \overline{u}) \, dx + \int_\mathbb{R} s \hat{v}_x \operatorname{Re} (\overline{v_x} u) \, dx + \int_\mathbb{R} s\hat{v} \operatorname{Re}(\overline{v_x}u_x) \, dx \\
& \quad + \int_\mathbb{R} \hat{v}_x \operatorname{Re}(\overline{u_x} u) \, dx + \int_\mathbb{R} \hat{v} |u_x|^2 \, dx.
\end{split}
\end{equation*}
By Young's inequality and \eqref{spbound}, for any $\eta>0$,
\begin{equation*}
\begin{split}
|\RNum{1}| & \leq \eta \left( \operatorname{Re} \lambda \int_\mathbb{R} |v_x|^2 \, dx + \int_\mathbb{R} |v_x|^2 \, dx \right) + \frac{C}{\eta} \left( \operatorname{Re} \lambda \int_\mathbb{R} |u|^2 \, dx + \int_\mathbb{R} |u_x|^2 \, dx \right) \\
& \quad + C \delta_S \int_\mathbb{R} \left( |v_x|^2 + |u|^2 + |u_x|^2 \right) \, dx.
\end{split}
\end{equation*}
We apply Young's inequality to estimate $\RNum{2}$:
\begin{equation*}
|\RNum{2}| \leq \eta \int_\mathbb{R} |v_x|^2 \, dx + \frac{C}{\eta} \int_\mathbb{R} \left( |u|^2 + |u_x|^2 + |\phi_x|^2 \right) \, dx.
\end{equation*}
Lastly, using the bound \eqref{spbound}, we have
\begin{equation*}
|\RNum{3}| \leq C \delta_S \int_\mathbb{R} \left( |v|^2 + |v_x|^2 + |u_x|^2 \right) \, dx.
\end{equation*}
Collecting the above estimates and choosing $\eta$ small enough, we get
\begin{equation*}
\operatorname{Re}{\lambda} \lVert v_x \rVert_{L^2}^2 + \lVert v_x \rVert_{L^2}^2 \leq C \delta_S \lVert v \rVert_{H^1}^2 + C \left( \operatorname{Re}{\lambda} \lVert u \rVert_{L^2}^2 + \lVert u \rVert_{H^1}^2 + \lVert \phi_x \rVert_{L^2}^2 \right).
\end{equation*}
Here, to control the terms $\operatorname{Re}{\lambda} \lVert u \rVert_{L^2}^2 + \lvert u_x \rVert_{L^2}^2$, we make use of \eqref{uL2}. Then, we have
\begin{equation*}
\operatorname{Re}{\lambda} \lVert v_x \rVert_{L^2}^2 + \lVert v_x \rVert_{L^2}^2 \leq C \delta_S \lVert v \rVert_{H^1}^2 + C \left( \lVert v \rVert_{L^2}^2 + \lVert u \rVert_{L^2}^2 + \lVert \phi_x \rVert_{L^2}^2 \right).
\end{equation*}
Finally, using the smallness of $\delta_S$, the desired inequality follows.
\end{proof}

\section{Transversality of the shock profiles} \label{App_trans}

In this appendix, we provide the proof of Lemma~\ref{trans}, establishing the transversality of the small-amplitude shock profiles described in Lemma~\ref{SP}.

\begin{proof}[Proof of Lemma~\ref{trans}]

When $\lambda=0$, integrating the first and third relations in \eqref{coordinate} from $-\infty$ to $x$ yields
\begin{equation} \label{Aconstraints}
v = - \frac{1}{s}u, \quad \tilde{u}' =  (b_1' - A_{21})v + (b_2'-s)u - D_{21}\phi' - E_{21}\phi''.
\end{equation}
These constraints reduce \eqref{5} to a three-dimensional ODE system, namely,
\begin{equation} \label{v_eq}
V'=\widetilde{A}(x;s)V,
\end{equation}
where $V = (u,\phi,\phi')^{\mathrm{tr}}$. Note that $\bar{V}' := (\bar{u}',\bar{\phi}',\bar{\phi}'')^{\mathrm{tr}}$ solves \eqref{v_eq} and is uniformly bounded on $\mathbb{R}$ by the bound \eqref{spbound}. A standard fact (see, e.g., \cite[Section~10.4]{ZH} and \cite[Section~1.4]{MZ2}) is that the intersection of the manifolds of solutions decaying as $x \to -\infty$ and as $x \to +\infty$ is transverse if and only if the (variational) equation \eqref{v_eq} admits a one-dimensional space of solutions bounded on $\mathbb{R}$ necessarily spanned by $\bar{V}'$. Thus, it suffices to show that \eqref{v_eq} has no other globally bounded solution.

Let
\begin{equation*}
A_0 := \lim_{x \to -\infty} \widetilde{A}(x;v_-^{-1}\sqrt{T+1}) = \begin{pmatrix}
- \frac{1}{\nu\sqrt{T+1}} & \frac{1}{\nu} & 0 \\
0 & 0 & 1 \\
-\frac{v_-}{\varepsilon^2 \sqrt{T+1}} & \frac{v_-}{\varepsilon^2} & 0
\end{pmatrix}
\end{equation*}
The eigenvalues of $A_0$ are given by
\begin{equation*}
\begin{split}
\sigma_- & = -\frac{1}{2\nu\sqrt{T+1}} - \frac{1}{2} \sqrt{\frac{1}{\nu^2(T+1)} + \frac{4v_-}{\varepsilon^2}} <0, \\
\sigma_0 & = 0, \\
\sigma_+ & = -\frac{1}{2\nu\sqrt{T+1}} + \frac{1}{2} \sqrt{\frac{1}{\nu^2(T+1)} + \frac{4v_-}{\varepsilon^2}} >0.
\end{split}
\end{equation*}
Moreover, using the bound \eqref{spbound} and $\textstyle s=\sqrt{(T+1)/(v_+v_-)}$, we find
\begin{equation*}
\left| \frac{\sqrt{T+1}}{v_-} - s  \right| = \left |\frac{\sqrt{T+1}}{v_-} - \sqrt{\frac{T+1}{v_+v_-}} \right| = \left| \frac{\sqrt{T+1}}{\sqrt{v_+}v_-(\sqrt{v_+}+\sqrt{v_-})} (v_+-v_-) \right| \leq C \delta_S,
\end{equation*}
and hence, by the defining relations \eqref{coordinate} and \eqref{Aconstraints} for $\widetilde{A}$,
\begin{equation*}
\lVert \widetilde{A}(x;s) - A_0 \rVert_{L^\infty(\mathbb{R})} \leq C \delta_S.
\end{equation*}
Let $r_-$, $r_0$, and $r_+$ be eigenvectors of $A_0$ associated with $\sigma_-$, $0$, and $\sigma_+$, and set $\widetilde{R}:= (r_-,r_0,r_+)$. Since the eigenvalues are distinct, $\widetilde{R}$ is invertible, and with the change of variables $V=\widetilde{R}Z$ the system \eqref{v_eq} becomes
\begin{equation} \label{Z_system}
Z' = (\widetilde{M} + \widetilde{\Theta}(x)) Z, \quad \widetilde{M} := \operatorname{diag}(\sigma_-,0,\sigma_+), \quad \widetilde{\Theta}(x) := \widetilde{R}^{-1}(\widetilde{A}(x;s) - A_0)\widetilde{R},
\end{equation}
with
\begin{equation} \label{Theta_bd}
\lVert \widetilde{\Theta} \rVert_{L^\infty(\mathbb{R})} \leq C \delta_S.
\end{equation}
Set
\begin{equation*}
\eta := \frac{1}{2} \min \{ -\sigma_-, \sigma_+ \} >0.
\end{equation*}
For $\delta_S$ sufficiently small, \eqref{Theta_bd} implies $\lVert \widetilde{\Theta} \rVert_{L^\infty} \ll \eta$. We now apply the tracking/reduction lemma \cite[Proposition~3.9]{MZ2} to obtain invariant graphs that describe the sets of data at $x=0$ generating solutions bounded as $x \to +\infty$ and as $x \to -\infty$.

We write $Z=(Z_{sc},Z_+)^{\mathrm{tr}}$ with $Z_{sc}:=(Z_-,Z_0)^{\mathrm{tr}} \in \mathbb{R}^2$ and $Z_+\in\mathbb{R}$. Then $\widetilde{M} = \operatorname{diag} (M_{sc},\sigma_+)$ with $M_{sc} := \operatorname{diag} (\sigma_-,0)$. Since the decoupled flows generated by $M_{sc}$ and $\sigma_+$ are exponentially separated with gap $2\eta$, Proposition~3.9 of \cite{MZ2} yields a family of linear maps
\begin{equation*}
\widetilde{\Phi}_+(x) : \mathbb{R}^2 \to \mathbb{R}, \quad \sup_{x \in \mathbb{R}}{\lVert \widetilde{\Phi}_+(x) \rVert} \leq C \frac{\lVert \widetilde{\Theta} \rVert_{L^\infty(\mathbb{R})}}{\eta} \leq C \delta_S,
\end{equation*}
such that the graph
\begin{equation*}
\mathcal{G}_+(x) := \{ (Z_{sc},Z_+): Z_+ = \widetilde{\Phi}_+ (x) Z_{sc} \}
\end{equation*}
is invariant under the flow of \eqref{Z_system}. Moreover, $\mathcal{G}_+(x)$ coincides with the set of data at $x$ that generate solutions bounded on $[x,+\infty)$. Similarly, we write $Z=(Z_-,Z_{cu})^{\mathrm{tr}}$ with $Z_{cu}:= (Z_0,Z_+)^{\mathrm{tr}} \in \mathbb{R}^2$. Applying Proposition~3.9 of \cite{MZ2}, we obtain
\begin{equation*}
\widetilde{\Phi}_-(x): \mathbb{R}^2 \to \mathbb{R}, \quad \sup_{x \in \mathbb{R}}{\lVert \widetilde{\Phi}_- (x) \rVert} \leq C \delta_S
\end{equation*}
such that the graph
\begin{equation*}
\mathcal{G}_-(x) := \{ (Z_-,Z_{cu}): Z_- = \widetilde{\Phi}_-(x) Z_{cu} \}
\end{equation*}
is invariant and coincides with the set of data at $x$ that generate solutions bounded on $(-\infty,x]$.

Let $Z(x)$ be a solution of \eqref{Z_system} bounded on $\mathbb{R}$. Then $Z(0)=(Z_-(0),Z_0(0),Z_+(0))^{\mathrm{tr}}$ satisfies
\begin{equation} \label{graph_rel}
Z_+(0) = \widetilde{\Phi}_+(0) (Z_-(0),Z_0(0))^{\mathrm{tr}}, \quad Z_-(0) = \widetilde{\Phi}_-(0) (Z_0(0),Z_+(0))^{\mathrm{tr}}.
\end{equation}
These relations yield a linear system for $(Z_+(0),Z_-(0))$:
\begin{equation*}
\begin{pmatrix}
1 & - \widetilde{\Phi}_{+,1}(0) \\
- \widetilde{\Phi}_{-,2}(0) & 1
\end{pmatrix} \begin{pmatrix}
Z_+(0) \\ Z_-(0)
\end{pmatrix} = \begin{pmatrix}
\widetilde{\Phi}_{+,2}(0) \\ \widetilde{\Phi}_{-,1}(0)
\end{pmatrix} Z_0(0).
\end{equation*}
Since $\lVert \widetilde{\Phi}_\pm(0) \rVert \leq C\delta_S$, the coefficient matrix is invertible for $\delta_S$ sufficiently small, so that $(Z_+(0),Z_-(0))$ is uniquely determined by $Z_0(0)$. Consequently, $Z(0)$ is determined by the single scalar parameter $Z_0(0)$, and by uniqueness of solutions to the linear ODE \eqref{Z_system} the space of solutions of \eqref{Z_system} bounded on $\mathbb{R}$ is one-dimensional. Recall that $\bar{V}'$ is a nontrivial bounded solution; hence the space is spanned by $\bar{V}'$. This proves the transversality and, by the definition of the transversality coefficient $\Gamma$ in the proof of Proposition~\ref{D'rel}, $\Gamma \neq 0$.
\end{proof}

\end{document}